\newtheorem{theorem}{Theorem}
\newtheorem{lemma}[theorem]{Lemma}
\newtheorem{proposition}[theorem]{Proposition}
\newtheorem{result}[theorem]{Proposition}
\numberwithin{theorem}{section}
\numberwithin{figure}{section}
\numberwithin{equation}{section}
\DeclareMathOperator{\cond}{\,|\,}
\DeclareMathOperator{\dist}{dist}
\DeclareMathOperator{\CR}{CR}
\DeclareMathOperator{\capa}{cap}
\DeclareMathOperator{\CM}{CM}
\DeclareMathOperator{\SLE}{SLE}
\DeclareMathOperator{\CLE}{CLE}
\DeclareMathOperator{\inte}{int}
\begin{document}

\title{Simple CLE in Doubly Connected Domains}
\author{Scott Sheffield, Samuel S. Watson and Hao Wu}


%
%
\maketitle

\abstract{We study Conformal Loop Ensemble ($\CLE_{\kappa}$) in doubly connected domains: annuli, the punctured disc, and the punctured plane. We restrict attention to $\CLE_{\kappa}$ for which the loops are simple, i.e. $\kappa\in (8/3,4]$.
In \cite{SheffieldWernerCLE}, simple $\CLE$ in the unit disc is introduced and constructed as the collection of outer boundaries of outermost clusters of the Brownian loop soup. For simple $\CLE$ in the unit disc, any fixed interior point is almost surely surrounded by some loop of $\CLE$. The gasket of the collection of loops in $\CLE$, i.e. the set of points that are not surrounded by any loop, almost surely has Lebesgue measure zero. In the current paper, simple $\CLE$ in an annulus is constructed similarly: it is the collection of outer boundaries of outermost clusters of the Brownian loop soup conditioned on the event that there is no cluster disconnecting the two components of the boundary of the annulus. Simple $\CLE$ in the punctured disc can be viewed as simple $\CLE$ in the unit disc conditioned on the event that the origin is in the gasket. Simple $\CLE$ in the punctured plane can be viewed as simple $\CLE$ in the whole plane conditioned on the event that both the origin and infinity are in the gasket. We construct and study these three kinds of $\CLE$s, along with the corresponding exploration processes.
\smallbreak
\noindent\textbf{Keywords:} Schramm Loewner Evolution, Conformal Loop Ensemble, doubly connected domains, exploration process.

}

\newcommand{\eps}{\epsilon}
\newcommand{\ov}{\overline}
\newcommand{\U}{\mathbb{U}}
\newcommand{\T}{\mathbb{T}}
\newcommand{\HH}{\mathbb{H}}
\newcommand{\LA}{\mathcal{A}}
\newcommand{\LD}{\mathcal{D}}
\newcommand{\LF}{\mathcal{F}}
\newcommand{\LK}{\mathcal{K}}
\newcommand{\LE}{\mathcal{E}}
\newcommand{\LL}{\mathcal{L}}
\newcommand{\LR}{\mathcal{R}}
\newcommand{\LU}{\mathcal{U}}
\newcommand{\LV}{\mathcal{V}}
\newcommand{\A}{\mathbb{A}}
\newcommand{\R}{\mathbb{R}}
\newcommand{\C}{\mathbb{C}}
\newcommand{\D}{\mathbb{D}}
\newcommand{\DO}{\mathbb{D}^{\dag}}
\newcommand{\CO}{\mathbb{C}^{\dag}}
\newcommand{\N}{\mathbb{N}}
\newcommand{\Z}{\mathbb{Z}}
\newcommand{\E}{\mathbb{E}}
\newcommand{\PP}{\mathbb{P}}
\newcommand{\QQ}{\mathbb{Q}}
\newcommand{\MR}{MR}

\noindent
\section{Introduction}
\label{sec::introduction}
Schramm Loewner Evolution ($\SLE$) curves were introduced by Oded Schramm \cite{SchrammScalinglimitsLERWUST} as candidates for the scaling limit of various interfaces in discrete statistical physics models. For each $\kappa\ge 0$, $\SLE_{\kappa}$ is a random curve in a simply connected domain (which is non-empty and is not the whole plane) connecting one boundary point to another boundary point that satisfies certain conformal symmetry and so-called domain Markov property \cite{SchrammScalinglimitsLERWUST}. Since their introduction, $\SLE_{\kappa}$ have been proved to be the scaling limits of many discrete models. For example, $\SLE_3$ has been proved to be the scaling limit of the interface in critical Ising model \cite{ChelkakSmirnovIsing, ChelkakSmirnovHonglerIsing}; $\SLE_4$ has been proved to be the scaling limit of a level line of the discrete Gaussian Free Field \cite{SchrammSheffieldDiscreteGFF, SchrammSheffieldContinuumGFF}.

When one studies the scaling limit of the collection of all interfaces in a discrete statistical physics models (as opposed to a single interface), one is led to the notion of Conformal Loop Ensemble ($\CLE$). For each $\kappa\in (8/3,8]$, one can define $\CLE_{\kappa}$ in the unit disc which is a random countable collection of loops that are contained in the unit disc. Only for $\kappa\in (8/3,4]$, the loops are simple and disjoint. We occasionally use the term ``simple $\CLE$" to refer to a non-nested disjoint conformal loop ensemble $\CLE_{\kappa}$ for $\kappa\in (8/3,4]$, and we will focus exclusively on these $\CLE$s for $\kappa\in (8/3,4]$. In \cite{SheffieldExplorationTree, SheffieldWernerCLE}, simple $\CLE$ in the unit disc is defined and studied. The Brownian loop soup is a random collection of the Brownian loops which are Brownian paths start and end at the same point (see Section \ref{subsec::preliminaries_brownianloopsoup}). In \cite{SheffieldWernerCLE}, simple $\CLE$ in the unit disc is constructed from Brownian loop soup and the authors prove that $\CLE_{\kappa}$ for $\kappa\in (8/3,4]$ is the only one-parameter family of collections of loops that satisfies conformal invariance and the domain Markov property (as we will define in Section \ref{subsec::simplecle_def}), and each loop of which looks locally like an $\SLE_{\kappa}$. Now $\CLE_3$ is conjectured to be the scaling limit of the collection of interfaces in the critical Ising model; $\CLE_4$ has been proved to be the collection of level lines of Gaussian Free Field. (The details have not all written, but a reasonably detailed proof appears in Jason Miller's lecture slides \cite{MillerSheffieldCLE4}). Later in \cite{KemppainenWernerNestedSimpleCLERiemannSphere}, the nested $\CLE$ in Riemann sphere is defined and studied. Most of the effort in \cite{KemppainenWernerNestedSimpleCLERiemannSphere} is devoted to showing that the nested $\CLE$ in whole plane is invariant under inversion $z\mapsto 1/z$.

Given a collection of $\CLE$ loops in the unit disc, it is natural to ask what is the ``distance" between a loop, say the loop containing the origin denoted by $\gamma(0)$, and the boundary $\partial\U$, or what is the ``distance" between two loops. Since $\CLE$ is conformal invariant, such a distance should also be conformal invariant, and should depend on the collection of the loops between $\gamma(0)$ and the boundary. It turns out that the collection of loops between $\gamma(0)$ and $\partial \U$ is a collection of loops in the annulus. Therefore, to find such a distance between loops, we need to understand the properties of $\CLE$ in the annulus. This is the motivation for this paper. 

We construct $\CLE$ in the annulus as the collection of the outer boundaries of outermost clusters of Brownian loop soup in the annulus conditioned on the event that there is no cluster disconnecting the two components of the boundary of the annulus. Our main results about $\CLE$ in the annulus can be summarized as follows:
\begin{itemize}
\item $\CLE$ in the annulus satisfies an annulus version of conformal invariance and the domain Markov property (detailed description in Section \ref{sec::cle_annulus}).
\item $\CLE$ in the annulus and $\CLE$ in the unit disc are related in the following way: for a $\CLE$ in the unit disc, fix the loop containing a particular interior point. Then, given this loop, the conditional law of the collection of loops between this particular loop and the boundary of the domain has the same law as $\CLE$ in the annulus.
\end{itemize}

Consider $\CLE$ in the annulus with inradius $r\in (0,1)$ and outradius 1. We show that, as $r$ goes to zero, $\CLE$ in the annulus converges, and the limit object can be viewed as $\CLE$ in the unit disc ``conditioned" on the event that the origin is in the gasket. This is a variant of $\CLE$ in which the origin plays a special role. We call it $\CLE$ in the punctured disc. This version of $\CLE$ has the nice properties as we would expect:
\begin{itemize}
\item $\CLE$ in the punctured disc satisfies conformal invariance and the domain Markov property (see Section \ref{sec::cle_punctured_disc}).
\item The law of the set of loops that are ``near to the boundary of the disc" is approximately the same for $\CLE$ in the unit disc and $\CLE$ in the punctured disc (in a sense we will make precise in Proposition \ref{prop::cle_punctured_disc_simply_connected}).
\end{itemize}

In the construction of $\CLE$ in the punctured disc, we let the inradius of the annulus go to zero. We can also let the outradius go to infinity at the same time as the inradius goes to zero: Consider $\CLE$ in the annulus with inradius $r$ and outradius $1/r$. When $r$ goes to zero, $\CLE$ in the annulus also converges, and we call the limit object $\CLE$ in the punctured plane. For $\CLE$ in the punctured plane, there is no loop separating the origin from infinity, and we define the gasket to be the set of points that are not separated by any loop from infinity (or equivalently, not separated by any loop from the origin). For $\CLE$ in punctured plane, the invariance under inversion $z\mapsto 1/z$ is true by construction (which is not trivially true for nested $\CLE$ in whole plane \cite{KemppainenWernerNestedSimpleCLERiemannSphere}).

We use the name ``$\CLE$ in doubly connected regions" to indicate the above three versions of $\CLE$: $\CLE$ in the annulus, $\CLE$ in the punctured disc, and $\CLE$ in the punctured plane.

In \cite{SheffieldWernerCLE}, the authors describe an exploration procedure to discover the loops in $\CLE$ progressively. The conformal invariance and domain Markov property of $\CLE$ make this exploration procedure easy to control. In our paper, we use the same procedure to explore the loops in $\CLE$ in the punctured disc.
We will give a precise quantitative relation between the continuous exploration process of $\CLE$ in the punctured disc and the continuous exploration process of $\CLE$ in the unit disc. The authors are in the process of carrying out a program to define a conformal invariant distance on $\CLE_4$ and other $\CLE$ loop configurations which includes \cite{WernerWuCLEExploration, WangWuLevellinesGFFI, WangWuLevellinesGFFII, SheffieldWatsonWuMetric}.  The continuous exploration process is an important ingredient in describing the ``distance" between loops, and this quantitative relation between exploration processes would shed lights on the asymptotic of the ``distance".
\smallbreak
\noindent\textbf{Acknowledgments.} The authors acknowledge Wendelin Werner, Greg Lawler, Julien Dub\'edat and Jason Miller for useful discussion on this project. S.\ Sheffield is funded by NSF DMS-1209044. S.\ Watson's is funded by the NSF Graduate Research Fellowship Programm, award No. 1122374. H.\ Wu's work is funded by NSF DMS-1406411.
\smallbreak
\noindent\textbf{Outline.} In Section \ref{sec::preliminiaries}, we give preliminaries about $\CLE$ in the unit disc and other tools.
We construct and study $\CLE$ in the annulus in Section \ref{sec::cle_annulus}, $\CLE$ in the punctured disc in Section \ref{sec::cle_punctured_disc}, and $\CLE$ in the punctured plane in  Section \ref{sec::cle_punctured_plane}.

\section{Preliminaries}
\label{sec::preliminiaries}
In this paper, we denote the disc, circle and annulus as follows: for $0<r<R$, $x\in\C$,
\begin{align*}
B(x,r)=\{z\in\C: |z-x|<r\},\quad & \D=B(0,1),\\
C(x,r)=\{z\in\C: |z-x|=r\},\quad & C_r=C(0,r),\\
\A(r,R)=\{z\in\C: r<|z|<R\},\quad  & \A_r=\A(r,1).
\end{align*}
\noindent Denote the punctured disc and punctured plane in the following way
\[\D^{\dag}=\D\setminus\{0\},\quad \C^{\dag}=\C\setminus\{0\}.\]
\noindent Throughout the paper, we fix the following constants:
\begin{equation}\label{eqn::universal_relation_constants}
\kappa\in \left(\frac{8}{3},4\right],\quad \beta=\frac{8}{\kappa}-1,\quad \alpha=\frac{(8-\kappa)(3\kappa-8)}{32\kappa},\quad c=\frac{(6-\kappa)(3\kappa-8)}{2\kappa}.
\end{equation}
\noindent For general positive functions $f$ and $g$, we write $f\lesssim g$ if $f/g$ is bounded from above by some universal constant; $f\gtrsim g$ if $g\lesssim f$; and $f\asymp g$ if $f\lesssim g $ and $f\gtrsim g$.

\subsection{Conformal radius and conformal modulus}
In this section, we are interested in two kinds of domains: non-trivial simply connected domains and annuli.

A non-trivial simply connected domain $D$ is a non-empty open subset of $\C$, which is not all of $\C$, such that both $D$ and its complement in the Riemann sphere are connected. From the Riemann mapping theorem, we know that, for any non-trivial simply connected domain $D$ and an interior point $z\in D$, there exists a unique conformal map $\Phi$ from $D$ onto the unit disc $\D$ such that $\Phi(z)=0$ and $\Phi'(z)>0$. We define the \textbf{conformal radius} of $D$ seen from $z$ as
\[\CR(D;z)=1/\Phi'(z).\]
We write $\CR(D)=\CR(D;z)$ if $z=0$.

Consider a closed subset $K$ of $\D$ such that $\D\setminus K$ is simply connected and $0\in\D\setminus K$. There exists a unique conformal map $\Phi_K$ from $\D\setminus K$ onto $\D$ normalized at the origin: $\Phi_K(0)=0$, and $\Phi'_K(0)>0$. In fact $\Phi'_K(0)\ge 1$, and
$$\CR(\D\setminus K)=1/\Phi'_K(0)\le 1.$$
The Schwarz lemma and the Koebe one quarter theorem imply that
\begin{equation}
d\le \CR(\D\setminus K)\le 4d
\end{equation}
where $d=\dist(0,K)$ is the Euclidean distance from the origin to $K$.

Define the \textbf{capacity of $K$ in $\D$ seen from the origin} as
\[\capa(K)=-\log \CR(\D\setminus K)\ge 0.\]
By convention, if $0\in K$, we set $\CR(\D\setminus K)=0$ and $\capa(K)=\infty$. When $K$ is small, i.e. the diameter $R(K)$ of $K$ is less than $1/2$, we have that\footnote{We may assume $K$ is contained in $B(1,R(K))$. Then $\capa(K)\le \capa(B(1,R(K))\cap\D)\asymp R(K)^2$.}
\[\capa(K)\lesssim R(K)^2.\]

An annular domain $A$ is a connected open subset of $\C$ such that its complement in the Riemann sphere has two connected components and both of them contain more than one point. Then there exists a unique constant $r\in (0,1)$ such that $A$ can be conformally mapped onto the standard annulus $\A_r$.
We define the \textbf{conformal modulus} of $A$, denoted as $\CM(A)$, to be this unique constant $r$.
Note that two annuli with different conformal radii can not be conformally mapped onto each other.

The following lemma describes the relation between the conformal radius of a non-trivial simply connected domain and the conformal modulus of an annulus.

\begin{lemma}\label{lem::CR_cvg}
Suppose $K$ is a closed subset of $\D$ such that $\D\setminus K$ is simply connected and $0\in\D\setminus K$. Clearly $\A_r\setminus K$ is an annulus for $r$ small enough. We have that
\[\frac{\CM(\A_r\setminus K)}{\CM(\A_r)}\to \CR(\D\setminus K)^{-1},\quad \text{as }r\to 0.\]
\end{lemma}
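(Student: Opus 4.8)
The plan is to reduce everything to an explicit comparison of moduli, using the conformal map $\Phi_K$ that normalizes $\D\setminus K$ at the origin. First I would recall the asymptotics of $\CM(\A_r)$: since $\A_r=\A(r,1)$ is already a standard annulus, $\CM(\A_r)=r$, so the statement is simply
\[
\frac{\CM(\A_r\setminus K)}{r}\;\longrightarrow\;\CR(\D\setminus K)^{-1}=\Phi_K'(0)
\qquad\text{as }r\to 0.
\]
The key observation is that $\Phi_K$ maps $\A_r\setminus K$ conformally onto the domain $\Phi_K(\A_r\setminus K)$, which is an annular domain whose outer boundary component is $\partial\D$ and whose inner boundary component is $\Phi_K(C_r)$. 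Since $\CM$ is a conformal invariant, $\CM(\A_r\setminus K)=\CM(\Phi_K(\A_r\setminus K))$. So the lemma will follow once I understand the modulus of the region between $\partial\D$ and the small curve $\Phi_K(C_r)$.

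Next I would estimate $\Phi_K(C_r)$ for small $r$. Because $\Phi_K$ is conformal near $0$ with $\Phi_K(0)=0$, Taylor expansion gives $\Phi_K(z)=\Phi_K'(0)z+O(|z|^2)$, so $\Phi_K(C_r)$ is a closed curve contained in the annulus $\A\!\left(\Phi_K'(0)r-Cr^2,\;\Phi_K'(0)r+Cr^2\right)$ for some constant $C$ depending on $K$, once $r$ is small. By monotonicity of the modulus under inclusion of annular domains (a larger ``gap'' between the two boundary components gives a smaller conformal modulus, and vice versa; this is the Grötzsch-type monotonicity that also underlies the inequality $d\le\CR(\D\setminus K)\le 4d$ cited in the excerpt), I can sandwich:
\[
\CM\bigl(\A(\Phi_K'(0)r+Cr^2,\,1)\bigr)\;\le\;\CM(\A_r\setminus K)\;\le\;\CM\bigl(\A(\Phi_K'(0)r-Cr^2,\,1)\bigr).
\]
Each of the outer bounds is the modulus of a standard round annulus $\A(\rho,1)$, which after rescaling by $1/\rho$ has modulus exactly $\rho$; hence both sides equal $\Phi_K'(0)r\pm Cr^2$. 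Dividing by $r$ and sending $r\to 0$ gives the claimed limit.

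The main obstacle, and the step requiring the most care, is making the monotonicity argument airtight: $\A_r\setminus K$ is not literally a round annulus, and $\Phi_K(C_r)$ is only approximately a circle, so I need a clean statement that if the annular domain $A'$ ``separates'' $\partial\D$ from a set squeezed between two concentric circles $C_\rho$ and $C_{\rho'}$, then $\CM(A')$ lies between $\CM(\A(\rho',1))$ and $\CM(\A(\rho,1))$. This is standard extremal-length / modulus monotonicity (the modulus of a ring domain decreases when the inner complementary component grows), but one must check that $\Phi_K(\A_r\setminus K)$ is genuinely the ring domain ``between'' $\partial\D$ and the bounded component containing $\Phi_K(C_r)$ — which holds because $\D\setminus K$ is simply connected so $\Phi_K(K)$ together with the part of $\D$ inside $\Phi_K(C_r)$ forms a single connected complementary piece. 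A secondary technical point is verifying $\A_r\setminus K$ really is an annular domain for all small $r$ (its complement in the sphere has the two components $\{|z|\le r\}$... rather, $\overline{B(0,r)}\setminus$ nothing, i.e. the closed inner disc, and $(\C\cup\{\infty\})\setminus\D$ joined to $K$), which is immediate once $r<\dist(0,K)$.
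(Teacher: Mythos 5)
Your proposal is correct and takes essentially the same route as the paper's proof: push forward by $\Phi_K$, note that the inner hole $\Phi_K(r\overline{\D})$ is asymptotically a round disc of radius $\Phi_K'(0)r$, and divide by $\CM(\A_r)=r$; the paper merely leaves the modulus-monotonicity sandwich implicit where you spell it out. One small slip worth fixing: with the paper's convention $\CM(\A(\rho,1))=\rho$, a thinner ring has \emph{larger} modulus, so the two outer terms in your displayed sandwich should be swapped, i.e. $\CM(\A(\Phi_K'(0)r-Cr^2,1))\le\CM(\A_r\setminus K)\le\CM(\A(\Phi_K'(0)r+Cr^2,1))$ — this does not affect the limit.
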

\begin{proof}
Suppose $\Phi$ is the conformal map from $\D\setminus K$ onto $\D$ normalized at the origin: $\Phi(0)=0$, and $\Phi'(0)>0$. 
Note that $\Phi(\A_r\setminus K)$ equals $\D\setminus \Phi(r\overline{\D})$, the inner hole of which is asymptotically a disk of radius $\Phi'(0)r$ as $r\to 0$. Thus we have that
\[\lim_{r\to 0}\frac{\CM(\A_r\setminus K)}{\CM(\A_r)}=\lim_{r\to 0}\frac{1}{r}\CM(\Phi(\A_r\setminus K))=\Phi'(0).\]
This implies the conclusion.
\end{proof}

\subsection{Brownian loop soup}\label{subsec::preliminaries_brownianloopsoup}
We now briefly recall some results from \cite{LawlerWernerBrownianLoopsoup}.
It is well known that Brownian motion in $\C$ is conformal invariant. Let us now define, for all $t \ge 0$, the law $\mu_t (z,z)$ of the two-dimensional Brownian bridge of time length
$t$ that starts and ends at $z$ and define
\[\mu^{\mathrm{loop}}=\int_{\C} \int_0^\infty  d^2z \frac {dt}{t} \mu_t (z,z)\]
where $d^2z$ is the Lebesgue measure in $\C$. We stress that $\mu^{\mathrm{loop}}$ is a measure on \textit{unrooted} loops modulo time-reparameterization (see \cite{LawlerWernerBrownianLoopsoup}). And $\mu^{\mathrm{loop}}$ inherits a striking conformal invariance property.
Namely, if for any subset $D\subset\C$, one defines the Brownian loop measure $\mu^{\mathrm{loop}}_D$ in $D$ as the restriction of $\mu^{\mathrm{loop}}$ to the set of loops contained in $D$, then it is shown in \cite{LawlerWernerBrownianLoopsoup} that:
\begin{itemize}
\item For two domains $D'\subset D$, $\mu^{\mathrm{loop}}_D$ restricted to the loops contained in $D'$ is the same as $\mu^{\mathrm{loop}}_{D'}$ (this is a trivial consequence of the definition of these measures).
\item For two connected domains $D_1,D_2$, suppose $\Phi$ is a conformal map from $D_1$ onto $D_2$, then the image of $\mu^{\mathrm{loop}}_{D_1}$ under $\Phi$ has the same law as $\mu^{\mathrm{loop}}_{D_2}$ (this non-trivial fact is inherited from the conformal invariance of planar Brownian motion).
\end{itemize}

Suppose $D$ is a domain and $V_1,V_2$ are two subsets of $D$. We denote by
\[\Lambda(V_1,V_2;D)\]
the measure of the set of Brownian loops in a domain $D$ that intersect both $V_1$ and $V_2$.

\begin{result} \cite[Lemma 3.1, Equation (22)]{LawlerBrownianLoopMeasure}\label{res::Brownianloopmeasure_estimate}
Suppose $0<r<1, R\ge 2$. Then we have that
$$\Lambda(C_1,C_R; \C\setminus \D_r)=2\int_r^1 s^{-1}\rho(R/s)ds$$
where the function $\rho$ satisfies the following estimate: there exists a universal constant $C<\infty$ such that, for $u\ge 2$
$$|\rho(u)-\frac{1}{2\log u}|\le \frac{C}{u\log u}.$$
\end{result}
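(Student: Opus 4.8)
I will treat the two assertions separately: the exact integral representation follows from scale invariance of the loop measure, while the asymptotics of $\rho$ reduce to a Brownian excursion estimate.

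The plan for the integral formula is to decompose loops by their minimal modulus. For a loop $\gamma$ contained in $\C\setminus\D_r$, write $m(\gamma)=\min_t|\gamma(t)|$ and $M(\gamma)=\max_t|\gamma(t)|$; such a loop avoids the disc of radius $r$, so $m(\gamma)>r$, and since the range of $|\gamma|$ is exactly the interval $[m(\gamma),M(\gamma)]$, the loop meets both $C_1$ and $C_R$ (recall $R\ge 2$) if and only if $m(\gamma)\le 1$ and $M(\gamma)\ge R$. Hence
\[
\Lambda(C_1,C_R;\C\setminus\D_r)=\mu^{\mathrm{loop}}\big(\{r<m(\gamma)\le 1,\ M(\gamma)\ge R\}\big).
\]
I would then invoke the scale invariance of $\mu^{\mathrm{loop}}$ (the conformal invariance recorded above, applied to the dilations $z\mapsto cz$): since $m\mapsto cm$ and $M\mapsto cM$, the pushforward of $\mu^{\mathrm{loop}}$ under $\gamma\mapsto\log m(\gamma)$ is translation invariant, while the ratio $M/m$ is scale invariant. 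Disintegrating along $\log m$, I would \emph{define} $2\rho(u)$ to be the density, per unit of $\log m$, of the loop measure restricted to $\{M/m\ge u\}$ — a quantity finite for every $u>1$ and independent of the reference value of $m$ by translation invariance (the factor $2$ being a harmless normalization convention). Writing $s=m$, so $ds/s=d(\log m)$, and using that on $\{m=s\}$ the event $\{M\ge R\}$ equals $\{M/m\ge R/s\}$, this yields
\[
\Lambda(C_1,C_R;\C\setminus\D_r)=\int_r^1 2\rho(R/s)\,\frac{ds}{s},
\]
which is the asserted identity.

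For the estimate $\rho(u)=\frac{1}{2\log u}+O\big(\frac{1}{u\log u}\big)$, the plan is to identify the density $2\rho(u)$ with a Brownian excursion (bubble) measure. Normalizing $m=1$, so the loops lie in the exterior $\{|z|>1\}$ and touch $C_1$, I would use the last-exit/first-entrance decomposition of the loop measure at the point of minimal modulus together with the standard relation between $\mu^{\mathrm{loop}}$ and the boundary excursion measure (as in Lawler--Werner) to express $2\rho(u)$ as the measure of excursions from $C_1$ into $\{|z|>1\}$ that reach $C_u$. To evaluate this I would apply the conformal map $z\mapsto\log z$, which sends the exterior of the unit disc to the half-cylinder $\{\mathrm{Re}\,w>0\}/2\pi i\Z$, the circle $C_1$ to the line $\{\mathrm{Re}\,w=0\}$, and $C_u$ to $\{\mathrm{Re}\,w=L\}$ with $L:=\log u$. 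By conformal invariance of Brownian motion the event ``reach $C_u$'' becomes ``$\mathrm{Re}\,w$ reaches level $L$'', governed by the one–dimensional gambler's-ruin/harmonic-measure estimate, contributing the main term $\asymp 1/L=1/\log u$. The corrections come from the difference between the cylinder and the half-plane $\HH$ (the $2\pi i$-periodic identification, relevant only for excursions that wind, which is improbable at depth $L$) and from the distortion of the $\log$ map near the far boundary; each is exponentially small in $L$, of relative order $e^{-L}=1/u$, so that $2\rho(u)=\frac{1}{\log u}\big(1+O(1/u)\big)$, which rearranges to the claimed bound.

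The scale-invariance argument for the integral formula is essentially bookkeeping and presents no real difficulty. The hard part will be the second assertion, and within it the \emph{sharpness} of the error term: obtaining $\frac{C}{u\log u}$ rather than a crude $o(1/\log u)$ forces one to control the two exponentially small corrections above quantitatively, i.e.\ to show via explicit harmonic-measure and Green's-function estimates in the annulus $\A(1,u)$ that the deviation from the straight half-cylinder computation decays exactly like $1/u$. Pinning down the leading constant also requires being careful with the normalization in the loop-to-excursion correspondence. This quantitative excursion analysis is where I expect the genuine work to lie, and it is precisely the content supplied by the cited Lemma~3.1 of Lawler's paper.
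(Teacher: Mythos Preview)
The paper does not supply its own proof of this statement: it is quoted verbatim as a result from \cite{LawlerBrownianLoopMeasure} (Lemma~3.1 and Equation~(22) there), with no argument given. So there is no ``paper's proof'' to compare against; the only question is whether your sketch is a sound outline of the cited result.

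Your derivation of the integral formula is correct and is essentially how Lawler obtains it. The key observations---that a loop in $\C\setminus r\overline{\D}$ meets both $C_1$ and $C_R$ exactly when $r<m(\gamma)\le 1$ and $M(\gamma)\ge R$, and that scale invariance makes the pushforward to $\log m$ translation invariant with the ratio $M/m$ as the only remaining parameter---are precisely the mechanism behind the formula. Defining $2\rho(u)$ as the (constant) density in $\log m$ on $\{M/m\ge u\}$ then gives the integral immediately.

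For the asymptotic $\rho(u)=\tfrac{1}{2\log u}+O\big(\tfrac{1}{u\log u}\big)$, your plan via the logarithm map to the half-cylinder, reducing to a one-dimensional hitting estimate with exponentially small corrections, is the right picture and matches Lawler's approach in spirit. You are also right that the delicate point is the \emph{sharp} $O(1/(u\log u))$ error rather than a soft $o(1/\log u)$; this is where Lawler's explicit annulus Green's function and Poisson kernel computations enter, and you correctly flag that this is exactly the content of the cited lemma rather than something you can get for free from the cylinder heuristic alone. As a proof \emph{outline} there is no gap; as a self-contained proof, the second half would still need those quantitative estimates written out.
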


For a fixed domain $D\subset \C$ and a constant $c>0$, a \textbf{Brownian loop-soup with intensity $c$ in $D$} is a Poisson point process with intensity $c\mu^{\mathrm{loop}}_D$. From the properties of Brownian loop measure, we have the following: Fix a domain $D$ and a constant $c>0$, and suppose $D'$ is a subset of $D$. Suppose $\LL$ is a Brownian loop-soup in $D$, let $\LL_1$ be the collection of loops in $\LL$ that are totally contained in $D'$ and let $\LL_2=\LL\setminus \LL_1$. Then $\LL_1$ has the same law as the Brownian loop soup in $D'$, and $\LL_1$ and $\LL_2$ are independent.

\subsection{CLE in the unit disc}
\subsubsection{Definition and properties}\label{subsec::simplecle_def}
A simple loop in the plane is the image of the unit circle under a continuous injective map. The Jordan Theorem says that a simple loop $L$ separates the plane into two connected components that we call its interior $\inte(L)$ (the bounded one) and its exterior (the unbounded one). We will use the $\sigma$-field $\Sigma$ generated by all the events of the type $\{O\subset\inte(L)\}$ where $O$ spans the set of open sets in the plane. Consider (at most countable) collections $\Gamma=(L_j,j\in J)$ of non-nested disjoint simple loops that are locally finite, i.e., for each $\eps>0$, only finitely many loops $L_j$ have a diameter greater than $\eps$. The space of collections of locally finite, non-nested, disjoint simple loops is equipped with the $\sigma$-field generated by the sets $\{\Gamma: \#\Gamma\cap A=k\}$ where $A\in\Sigma$ and $k\ge 0$. Therefore, to characterize the law of $\Gamma$, we only need to characterize the laws of macroscopic loops in $\Gamma$. In other words, if we characterize the law of all loops in $\Gamma$ with diameter greater than $\eps$ for each $\eps>0$, then the law of $\Gamma$ is determined.

Let us now briefly recall some features of $\CLE$ for $\kappa \in (8/3, 4]$ -- we refer to \cite{SheffieldWernerCLE} for details (and the proofs) of these statements.
A $\CLE$ in $\D$ is a collection $\Gamma$ of non-nested disjoint simple loops $(\gamma_j, j \in  J)$ in $\D$ that possesses a particular conformal restriction property. In fact, this property, which we will now recall, characterizes these $\CLE$s:
\begin{itemize}
\item (Conformal Invariance) For any M\"obius transformation $\Phi$ of $\D$ onto itself, the laws of $\Gamma$ and  $\Phi(\Gamma)$ are the same. This makes it possible to define, for any non-trivial simply connected domain $D$ (that can therefore be viewed as the conformal image of $\D$ via some map $\tilde \Phi$), the law of $\CLE$ in $D$ as the distribution of $\tilde \Phi (\Gamma)$ (because this distribution does not depend on the actual choice of conformal map $\tilde \Phi$ from $\D$ onto $D$).
\item (Domain Markov Property) For any non-trivial simply connected domain $D \subset \D$, define the set $D^* = D^* (D, \Gamma)$ obtained by removing from $D$ all the loops (and their interiors) of $\Gamma$ that do not entirely lie in $D$. Then, conditionally on $D^*$, and for each connected component $U$ of $D^*$, the law of those loops of $\Gamma$ that do stay in $U$ is exactly that of a $\CLE$ in $U$.
\end{itemize}

As we mentioned in Section \ref{sec::introduction}, the loops in a given $\CLE$ are $\SLE_{\kappa}$ type loops for some value of $\kappa \in (8/3, 4]$ (and they look locally like $\SLE_{\kappa}$ curves).  In fact for each such value of $\kappa$, there exists exactly one $\CLE$ distribution that has $\SLE_{\kappa}$ type loops.

As explained in \cite{SheffieldWernerCLE}, a construction of these particular families of loops can be given in terms of outer boundaries of outermost clusters of the Brownian loops in a Brownian loop soup with intensity $c\in (0,1]$ which is a function in $\kappa$ given by:
\begin{equation*}
c=c(\kappa)=\frac{(6-\kappa)(3\kappa-8)}{2\kappa}.
\end{equation*}
Throughout the paper, we will denote the law of $\CLE$ in a non-trivial simply connected domain $D$ by $\mu^{\sharp}_D$.

\subsubsection{Exploration of CLE in the unit disc}\label{subsec::simplecle_exploration}
In \cite{SheffieldWernerCLE}, the authors introduce a discrete exploration process of $\CLE$ loop configuration. The conformal invariance and the domain Markov property make the discrete exploration much easier to control.
Consider a $\CLE$ in the unit disc, draw a small disc $B(x,\eps)$ and let $\gamma^{\eps}$ be the loop that intersects $B(x,\eps)$ with largest radius. Define the quantity
\begin{equation}\label{eqn::ueps_definition}
u(\eps)=\PP[\gamma^{\eps}\text{ contains the origin}].
\end{equation}
In fact, $u(\eps)=\eps^{\beta+o(1)}$ as $\eps$ goes to zero where $\beta=8/\kappa-1$.

\begin{result}\label{res::bubble_simple_cle}\cite[Section 4]{SheffieldWernerCLE}
The law of $\gamma^{\eps}$ normalized by $1/u(\eps)$ converges towards a bubble measure, denoted as $\nu^{bub}_{\D;x}$ which we call \textbf{$\SLE$ bubble measure in $\D$ rooted at $x$}. This $\nu^{bub}_{\D;x}$ is an infinite $\sigma$-finite measure, and we have
\begin{enumerate}
\item [(1)] $\nu^{bub}_{\D;x}[\gamma \text{ contains the origin}]=1$;
\item [(2)] For $r$ small enough, $\nu^{bub}_{\D;x}[R(\gamma)\ge r]\asymp r^{-\beta}$ where $R(\gamma)$ is the smallest radius $r$ such that $\gamma$ is contained in $B(x,r)$.
\end{enumerate}
\end{result}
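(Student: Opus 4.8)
\noindent\textit{Proof strategy.} Throughout we take $x\in\partial\D$ (the case of interest; for an interior point $\gamma^\eps$ eventually equals the loop surrounding $x$, so the statement degenerates), and we use conformal invariance freely to transport the picture to the upper half-plane $\HH$ rooted at $0$ when convenient. We interpret the convergence $u(\eps)^{-1}\,\mathrm{Law}(\gamma^\eps)\to\nu^{bub}_{\D;x}$ as follows: for every $r>0$, the finite measures $u(\eps)^{-1}\mathbf 1\{R(\gamma)\ge r\}\,\mathrm{Law}(\gamma^\eps)$ converge weakly, on the space of simple loops with the Hausdorff metric, to $\mathbf 1\{R(\gamma)\ge r\}\,\nu^{bub}_{\D;x}$. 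Here $R$ is measured from $x$, and we first record an elementary fact that makes item (1) come out at the end: since $\gamma^\eps\subset\D$ and, when $0\in\inte(\gamma^\eps)$, the segment $[x,-x]$ (which passes through $0$) must leave $\inte(\gamma^\eps)$ at a point at distance $>1$ from $x$, we always have $R(\gamma^\eps)>1$ on $\{\gamma^\eps\ni 0\}$, so $\{\gamma\ni 0\}\subseteq\{R(\gamma)\ge 1\}$.

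The crux is a regeneration structure for ``the largest loop hitting $B(x,\eps)$''. The functional $\eps\mapsto\gamma^\eps$ is not itself Markovian in $\eps$, so the right device is the boundary exploration of $\CLE$ from \cite{SheffieldWernerCLE}, an $\SLE_\kappa(\kappa-6)$ process started at $x$ that traces the $\CLE$ loops successively and enjoys a conformal Markov property: conditionally on the loops closed so far and on the uniformizing map of the unexplored region, the continuation is again such an exploration. Using this and the domain Markov property, the plan is to show that the conditional law of $\gamma^\eps$ given $\{R(\gamma^\eps)\ge r\}$ converges as $\eps\to0$, and that $\PP[R(\gamma^\eps)\ge r]/\PP[R(\gamma^\eps)\ge r_0]$ converges for fixed $r,r_0\in(0,2)$; dividing by $u(\eps)=\PP[\gamma^\eps\ni 0]$ --- the probability that the exploration closes a loop surrounding the origin --- and invoking the stated estimate $u(\eps)=\eps^{\beta+o(1)}$ of \cite{SheffieldWernerCLE} to keep the normalizers nondegenerate, this produces a limiting measure $\nu^{bub}_{\D;x}$ that is finite on each $\{R(\gamma)\ge r\}$. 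Two soft points finish the convergence: tightness of the restricted measures (the loops live in the compact family of closed subsets of $\ov\D$, and the masses $\PP[R(\gamma^\eps)\ge r]/u(\eps)$ are bounded by the renewal estimate), and uniqueness of subsequential limits (each obeys the same conformal Markov compatibility across scales inherited from the domain Markov property).

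Granting the limit, the rest is short. Item (1): $u(\eps)^{-1}\PP[\gamma^\eps\ni 0]=1$ for every $\eps$, and $\{\gamma\ni 0\}$ is a $\nu^{bub}_{\D;x}$-continuity set (it is open among simple loops avoiding $0$, and $\nu^{bub}_{\D;x}$ charges no loop through the fixed point $0$); as $\{\gamma\ni 0\}\subseteq\{R(\gamma)\ge 1\}$ no mass escapes to small loops, so $\nu^{bub}_{\D;x}[\gamma\ni 0]=\lim_{\eps\to0}u(\eps)^{-1}\PP[\gamma^\eps\ni 0]=1$. $\sigma$-finiteness is immediate since the $\{R(\gamma)\ge r\}$, $r\downarrow0$, exhaust all bubbles and each has finite mass; infiniteness will follow from item (2). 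Item (2): for small $r$ a bubble with $R(\gamma)\ge r$ is a local object near $x$ --- the boundary of $\D$ near $x$ differs from a straight line by curvature $O(r)$, and the origin lies at distance of order $1$ --- so $\nu^{bub}_{\D;x}[R(\gamma)\ge r]=(1+O(r))\,\nu^{bub}_{\HH;0}[R(\gamma)\ge r]$, with $\nu^{bub}_{\HH;0}$ the half-plane $\SLE_\kappa$ bubble measure. The latter is exactly covariant under the dilations of $\HH$ fixing $0$, with the same scaling exponent that governs $u(\eps)=\eps^{\beta+o(1)}$, namely $\beta=8/\kappa-1$; exact covariance forces the power law $\nu^{bub}_{\HH;0}[R(\gamma)\ge r]=Cr^{-\beta}$, whence $\nu^{bub}_{\D;x}[R(\gamma)\ge r]\asymp r^{-\beta}$.

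We expect the regeneration step to be the real obstacle: a $\CLE$ loop can stick far out of $B(x,\eps)$ yet be reachable from $x$ only through a thin corridor, so $\gamma^\eps$ admits no naive scale-by-scale decoupling and the argument must genuinely be routed through the exploration process; moreover, upgrading the soft convergence into the sharp $\asymp$ of item (2) and pinning the constant $1$ in item (1) requires quasi-multiplicativity estimates, cleanest to obtain from the Brownian loop-soup description of $\CLE$ together with the loop-measure estimate of Proposition~\ref{res::Brownianloopmeasure_estimate}. The remaining ingredients --- conformal invariance, the continuity-set argument, and the half-plane scaling --- are routine by comparison.
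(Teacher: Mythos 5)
First, a point of comparison: the paper does not prove this statement at all --- it is imported verbatim from \cite[Section 4]{SheffieldWernerCLE} (the ``Proposition'' environment here is reserved for quoted facts), so there is no in-paper proof to measure yours against. Judged on its own terms, your write-up correctly identifies the ingredients of the Sheffield--Werner argument: the boundary/discrete exploration with its i.i.d.\ renormalized steps, a renewal structure across scales, restriction to $\{R(\gamma)\ge r\}$ so as to work with finite measures, and exact dilation covariance of the half-plane version to extract the power law in item (2). Your preliminary observation $\{\gamma\ni 0\}\subseteq\{R(\gamma)\ge 1\}$ is correct and is the right device for preventing mass from escaping to small loops in item (1).

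However, there is a genuine gap at the crux, and you flag it yourself: the existence of the limit is never established. The sentences ``the plan is to show that the conditional law of $\gamma^{\eps}$ given $\{R(\gamma^{\eps})\ge r\}$ converges'' and ``uniqueness of subsequential limits (each obeys the same conformal Markov compatibility across scales)'' are statements of the goal, not arguments; this is precisely where the work lies in \cite{SheffieldWernerCLE}, namely comparing explorations at scales $\eps$ and $\eps'$ and proving convergence of the ratios $\PP[R(\gamma^{\eps})\ge r]/u(\eps)$ and of the conditional laws, which requires the separation/quasi-multiplicativity estimates you defer to your closing paragraph as an expectation rather than supply. Two further points would need repair even granting the limit. (i) Identifying the covariance exponent of $\nu^{bub}_{\HH;0}$ as $\beta$ is not automatic from $u(\eps)=\eps^{\beta+o(1)}$ alone: exact covariance only yields $\nu^{bub}_{\HH;0}[R(\gamma)\ge r]=Cr^{-\beta'}$ for some $\beta'$, and matching $\beta'=\beta$ needs a quantitative link between $\PP[R(\gamma^{\eps})\ge r]$ and $u(\eps)/u(r)$ --- again the missing step. (ii) The continuity-set argument for item (1) fails in the Hausdorff metric as stated: $\{\gamma: 0\in\inte(\gamma)\}$ is not open for Hausdorff convergence of loop traces (a loop surrounding the origin is a Hausdorff limit of thin non-surrounding arcs); one needs convergence of loops as parameterized curves modulo reparameterization, under which the winding number about $0$ is locally constant off the loops passing through $0$. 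These defects are fixable, but as written the proposal is a roadmap to \cite{SheffieldWernerCLE} rather than a proof.
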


Because of the conformal invariance and the domain Markov property, we can repeat the ``small semi-disc exploration" until we discover the loop containing the origin: Suppose we have a $\CLE$ loop configuration in the unit disc $\D$. We draw a small semi-disc of radius $\eps$ whose center is uniformly chosen on the unit circle. The loops that intersect this small semi-disc are the loops we discovered. If we do not discover the loop containing the origin, we refer to the connected component of the remaining domain that contains the origin as the \textit{to-be-explored domain}. Let $f_1^{\eps}$ be the conformal map from the to-be-explored domain onto the unit disc normalized at the origin. We also define $\gamma_1^{\eps}$ as the loop we discovered with largest radius. Because of the conformal invariance and the domain Markov property of $\CLE$, the image of the loops in the to-be-explored domain under the conformal map $f_1^{\eps}$ has the same law as simple $\CLE$ in the unit disc. Thus we can repeat the same procedure for the image of the loops under $f_1^{\eps}$. We draw a small semi-disc of radius $\eps$ whose center is uniformly chosen on the unit circle. The loops that intersect the small semi-disc are the loops we discovered at the second step. If we do not discover the loop containing the origin, define the conformal map $f_2^{\eps}$ from the to-be-explored domain onto the unit disc normalized at the origin. The image of the loops in the to-be-explored domain under $f_2^{\eps}$ has the same law as $\CLE$ in the unit disc, etc. At some finite step $N$, we discover the loop containing the origin, we define $\gamma^{\eps}_{N}$ as the loop containing the origin discovered at this step and stop the exploration. We summarize the properties and notations in this discrete exploration below.
\begin{itemize}
\item Before $N$, all steps of discrete exploration are i.i.d.
\item The number of the step $N$, when we discover the loop containing the origin, has the geometric distribution:
\[\PP[N>n]=\PP[\gamma^{\eps} \text{does not contain the origin}]^n=(1-u(\eps))^n.\]
\item Define the conformal map
\[\Phi^{\eps}=f^{\eps}_{N-1}\circ \cdots\circ f_2^{\eps}\circ f_1^{\eps}.\]
\end{itemize}
As $\eps$ goes to zero, the discrete exploration will converge to a Poisson point process of bubbles with intensity measure given by
\[\nu^{bub}_{\D}=\int_{\partial\D}dx \nu^{bub}_{\D;x}\]
where $dx$ is Lebesgue length measure on $\partial\D$. See \cite{SheffieldWernerCLE} for details.

Now we can reconstruct $\CLE$ loops from the Poisson point process of $\SLE$ bubbles. Let $(\gamma_t,t\ge 0)$ be a Poisson point process with intensity $\nu^{bub}_{\D}$.
Namely, let $((\gamma_j,t_j),j\in J)$ be a Poisson point process with intensity $\nu^{bub}_{\D}\times[0,\infty)$, and then arrange the bubble according to the time $t_j$, i.e. denote $\gamma_t$ as the bubble $\gamma_j$ if $t=t_j$, and $\gamma_t$ is empty set if there is no $t_j$ that equals $t$. Clearly, there are only countably many bubbles in $(\gamma_t,t\ge 0)$ that are not empty set.
Define
\[\tau=\inf\{t: \gamma_t\text{ contains the origin}\}.\]
For each $t<\tau$, the bubble $\gamma_t$ does not contain the origin. Define $f_t$ to be the conformal map from the connected component of $\D\setminus\gamma_t$ containing the origin onto the unit disc and normalized at the origin: $f_t(0)=0,f_t'(0)>0$. For this Poisson point process, we have the following properties \cite[Section 4, Section 7]{SheffieldWernerCLE}:

\begin{itemize}
\item The time $\tau$ has the exponential law: $\PP[\tau>t]=e^{-t}$.
\item For $r>0$ small, let $t_1(r),t_2(r), ...,t_j(r)$ be the times $t$ before $\tau$ at which the bubble $\gamma_t$ has radius greater than $r$. Define $\Psi^r=f_{t_j(r)}\circ\cdots\circ f_{t_1(r)}$. Then $\Psi^r$ almost surely converges towards some conformal map $\Psi$ in the Carath\'eodory topology seen from the origin  as $r$ goes to zero. And $\Psi$ can be interpreted as $\Psi=\circ_{t<\tau}f_t$.
\item Generally, for each $t\le\tau$, we can define $\Psi_t=\circ_{s<t}f_s$. Then
\[(L_t:=\Psi^{-1}_t(\gamma_t),0\le t\le\tau)\] is a collection of loops in the unit disc and $L_{\tau}$ is a loop containing the origin.
\end{itemize}

The relation between this Poisson point process of bubbles and the discrete exploration process we described above is given via the following result.

\begin{result}\label{res::caratheodory_convergence}\cite[Section 4]{SheffieldWernerCLE}
$\Phi^{\eps}$ converges in distribution to $\Psi$ in the Carath\'eodory topology seen from the origin.
And $L_{\tau}$ has the same law as the loop of $\CLE$ in $\D$ containing the origin.
\end{result}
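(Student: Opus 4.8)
The plan is to establish the joint convergence in law of the pair $(\Phi^\eps,\gamma^\eps_N)$ to $(\Psi,\gamma_\tau)$, with $\Phi^\eps\to\Psi$ in the Carath\'eodory topology seen from the origin and $\gamma^\eps_N\to\gamma_\tau$ in the topology on loops of Section~\ref{subsec::simplecle_def}. The second assertion is then essentially automatic: iterating the conformal invariance and the domain Markov property shows that for \emph{every} $\eps$ the loop $(\Phi^\eps)^{-1}(\gamma^\eps_N)$ is distributed exactly as the loop of $\CLE$ in $\D$ surrounding the origin (indeed, if one fixes a $\CLE$ and performs the $\eps$-exploration on it, then $(\Phi^\eps)^{-1}(\gamma^\eps_N)$ \emph{is} that loop, the same loop for all $\eps$), so combining this with the joint convergence and the continuity, away from $\partial\D$, of $(\phi,L)\mapsto\phi^{-1}(L)$ identifies the law of $L_\tau=\Psi^{-1}(\gamma_\tau)$ as that of the $\CLE$ loop around $0$.

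To prove the joint convergence I would first reparametrize the discrete exploration so that its $k$-th step occurs at time $k\,u(\eps)$; then $\tau_\eps:=N\,u(\eps)$ satisfies $\PP[\tau_\eps>t]=(1-u(\eps))^{\lfloor t/u(\eps)\rfloor}\to e^{-t}$, matching the law of $\tau$, and the steps are i.i.d.\ before $N$. By Result~\ref{res::bubble_simple_cle}, the law of the largest discovered loop $\gamma^\eps$ rescaled by $1/u(\eps)$ converges, after averaging over the uniform root on $\partial\D$, to $\nu^{bub}_{\D}$; in particular, for fixed $r>0$ the per-step probability of discovering a loop of radius at least $r$ is comparable to $u(\eps)\,\nu^{bub}_{\D}[R(\gamma)\ge r]$, which is finite by part~(2) of that result. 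A standard binomial-to-Poisson superposition argument (independence of the steps, vanishing per-step probabilities, convergence of the rescaled one-step law) then shows that the marked point process recording the times and rescaled shapes of the ``$r$-large'' steps before $\tau_\eps$ converges in law to a Poisson point process on $[0,\infty)$ with intensity $dt\times\nu^{bub}_{\D}\mathbf 1_{\{R(\gamma)\ge r\}}$, jointly with $\tau_\eps\to\tau$ and $\gamma^\eps_N\to\gamma_\tau$.

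It remains to upgrade this scale-$r$ picture to the full conformal maps. For fixed $r$, $\Phi^{\eps,r}$, the composition of the conformal maps attached only to the $r$-large steps before $\tau_\eps$, is a composition of a tight (random) number of conformal maps each converging jointly to the corresponding $f_{t_i(r)}$; since composition is continuous in the Carath\'eodory topology on families with uniformly bounded $\log$-derivative at the origin, $\Phi^{\eps,r}\to\Psi^r$ in law. The contribution of the ``$r$-small'' steps must then be shown negligible \emph{uniformly in $\eps$} as $r\to0$: the hull removed by a discovered loop of radius $R$ lies in a ball of radius $O(R)$ centred at a point of the evolving boundary, so by the estimate $\capa(K)\lesssim R(K)^2$ for small $K$ recalled above its capacity is $\lesssim R^2$; since $\log$-derivatives at the origin add under composition, the total capacity accumulated by the $r$-small steps before $\tau_\eps$ has expectation $\lesssim u(\eps)^{-1}\,\E[\tau_\eps]\,\E\big[R(\gamma^\eps)^2\mathbf 1_{\{R(\gamma^\eps)<r\}}\big]\asymp\int_0^r s^{1-\beta}\,ds$, which is finite and $\to0$ as $r\to0$ precisely because $\beta=8/\kappa-1<2$ for $\kappa>8/3$; the same bound holds for the small bubbles of the limiting Poisson point process. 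Since a composition of conformal maps with small total accumulated capacity is Carath\'eodory-close to the identity seen from the origin, $\Phi^\eps$ and $\Phi^{\eps,r}$ are uniformly-in-$\eps$ close in probability for small $r$, while $\Psi^r\to\Psi$ (as recorded in Section~\ref{subsec::simplecle_exploration}); a standard approximation argument then gives $\Phi^\eps\to\Psi$, and, with the previous paragraph, the full joint convergence.

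I expect the main obstacle to be precisely this uniform-in-$\eps$ control of the cumulative effect of the microscopic loops: both the summability of the accumulated capacities — where the hypothesis $\kappa>8/3$, i.e.\ $\beta<2$, is essential — and the passage from ``small accumulated capacity'' to Carath\'eodory-closeness of the iterated conformal maps. A secondary technical point, needed to turn Carath\'eodory convergence of the maps together with loop convergence into convergence of the pulled-back loops, is an a priori estimate that the $\CLE$ loop around the origin stays at positive distance from $\partial\D$ with overwhelming probability, so that the relevant loops can be confined to a compact subset of $\D$.
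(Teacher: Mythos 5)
This statement is imported verbatim from \cite[Section 4]{SheffieldWernerCLE} and is not proved in the present paper, so there is no in-paper argument to compare against; your sketch reconstructs the original Sheffield--Werner proof along essentially the same lines (i.i.d.\ exploration steps with geometric $N$, Poissonian limit of the rescaled rare large steps, summability of accumulated capacities via $\capa\lesssim R^2$ and $\beta<2$, stability of Loewner chains under small total capacity, and the exact identity $(\Phi^{\eps})^{-1}(\gamma^{\eps}_N)=$ the $\CLE$ loop around $0$ for every $\eps$). The only point I would tighten is that each step removes \emph{all} loops meeting $B(x,\eps)$ together with the semi-disc itself, not just $\gamma^{\eps}$; since these are all contained in $B(x,\max(R(\gamma^{\eps}),\eps))$ this only adds an $\eps^{2}/u(\eps)\to 0$ term to your capacity estimate, so the argument goes through.
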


Write
\[D_t=\Psi_t^{-1}(\D),\quad t\le \tau.\]
We call the sequence of domains $(D_t,t\le\tau)$ \textbf{the continuous exploration process of $\CLE$ in $\D$ (targeted at the origin)}.

\section{CLE in the annulus}
\label{sec::cle_annulus}
\subsection{Definition and properties of CLE in the annulus}
In this section, we will construct $\CLE$ loop configuration in the annulus $\A_r$ for $r\in (0,1)$. We want to use the same idea of constructing $\CLE$ in $\D$ from the Brownian loop soup. Suppose $\LL(\A_r)$ is a Brownian loop soup in $\A_r$. Note that $\LL(\A_r)$ can have clusters that disconnect the inner boundary $C_r$ from the outer boundary $C_1$ and this is the case we will not address in the current paper. We will consider the loop-soup conditioned on the event $E(\LL(\A_r))$ that there is no cluster of $\LL(\A_r)$ that disconnects the inner boundary from the outer boundary.

On the event $E(\LL(\A_r))$, let $\Gamma(\A_r)$ be the collection of the outer boundaries of outermost clusters of $\LL(\A_r)$. Clearly, $\Gamma(\A_r)$ is a collection of disjoint simple loops in $\A_r$. We define \textbf{$\CLE$ in the annulus} $\A_r$ as the law of $\Gamma(\A_r)$ conditioned on the event $E(\LL(\A_r))$. Since the event $E(\LL(\A_r))$ has positive probability, the above $\CLE$ in the annulus is well-defined.

For any annulus $A$, suppose its conformal modulus is $r$ and $\varphi$ is a conformal map from $\A_r$ onto $A$. Then $\CLE$ in the annulus $A$ can be defined as the image of $\CLE$ in the annulus $\A_r$ under the map $\varphi$. And we denote the law of $\CLE$ in the annulus $A$ as $\mu^{\sharp}(A)$.

We denote $p(\A_r)$ as the probability of the event $E(\LL(\A_r))$. Clearly, $p(\A_r)$ only depends on $r$, so we may also denote it by $p(r)$. The following lemma summarizes the asymptotic behavior of $p(r)$ as $r$ goes to zero. Recall the relation in Equation (\ref{eqn::universal_relation_constants}).

\begin{result} \cite[Lemma 7, Corollary 8]{NacuWernerCLECarpets}\label{res::p_asymptotic}
Suppose $p(r)$ is the probability of the event $E(\LL(\A_r))$. Then $p$ is nondecreasing and there exists a universal constant $C<\infty$ such that, for $0<r,r'<1$,
\begin{equation}\label{eqn::aymptoticestimate_p_1}
\frac{1}{C}p(r)p(r'/C)\le p(rr')\le p(r)p(r').
\end{equation}
Furthermore, we have that
\begin{itemize}
\item There exists a constant $C\ge 1$ such that, for $r$ small enough,
\begin{equation}\label{eqn::aymptoticestimate_p_2}r^{\alpha}\le p(r)\le C r^{\alpha}.\end{equation}
\item For any constant $\lambda\in (0,1)$, we have
\begin{equation}\label{eqn::aymptoticestimate_p_3}\lim_{r\to 0}\frac{p(\lambda r)}{p(r)}=\lambda^{\alpha}.\end{equation}
\end{itemize}
\end{result}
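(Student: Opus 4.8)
The plan is to derive the statement from three structural properties of $p$ --- monotonicity, submultiplicativity, and a matching lower bound --- and then to read off (\ref{eqn::aymptoticestimate_p_2}) and (\ref{eqn::aymptoticestimate_p_3}) from (\ref{eqn::aymptoticestimate_p_1}). I would couple all loop soups by viewing each as the loops of a single intensity-$c$ soup on $\C$ that are contained in the relevant domain, and use throughout the conformal invariance of $\mu^{\mathrm{loop}}$ and the independence/restriction property of the soup. \emph{Monotonicity:} if $r_1<r_2$ then $\A(r_1,r_1/r_2)$ is a sub-annulus of $\A_{r_1}$ of modulus $r_2$, so the loops of $\LL(\A_{r_1})$ it contains form a conformal copy of $\LL(\A_{r_2})$; a cluster disconnecting the two boundary circles of this sub-annulus also disconnects $C_{r_1}$ from $C_1$ inside $\A_{r_1}$ (the set $\A(r_1/r_2,1)$ being connected and attached to $C_1$), so $E(\LL(\A_{r_1}))$ is contained in the corresponding event for the sub-soup and $p(r_1)\le p(r_2)$. \emph{Submultiplicativity:} cut $\A_{rr'}$ along $C_{r'}$ into $\A(rr',r')$ (modulus $r$) and $\A(r',1)$ (modulus $r'$); the loops contained in these two pieces are independent with the laws of $\LL(\A_r)$ and $\LL(\A_{r'})$, and on $E(\LL(\A_{rr'}))$ neither piece carries a cluster disconnecting its boundary circles, hence $p(rr')\le p(r)p(r')$.

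For the lower bound fix a large $M$ and first take $r,r'<1/M$. Split $\A_{rr'}$ into the inner annulus $A_1=\A(rr',r'/M)$ (modulus $Mr$), a buffer $B=\A(r'/M,Mr')$ (modulus $M^{-2}$), and the outer annulus $A_2=\A(Mr',1)$ (modulus $Mr'$), and split the loops of $\LL(\A_{rr'})$ into the three mutually independent families $\LL^{(1)}$ (those contained in $A_1$), $\LL^{(0)}$ (those meeting $\overline B$), $\LL^{(2)}$ (those contained in $A_2$). Let $F_1$ (resp.\ $F_2$) be the event that $\LL^{(1)}$ (resp.\ $\LL^{(2)}$) has no cluster disconnecting the two boundary circles of $A_1$ (resp.\ $A_2$), so $\PP[F_1]=p(Mr)$ and $\PP[F_2]=p(Mr')$ by conformal invariance, and let $F_0$ be the event that no cluster of $\LL^{(0)}$ meets both $C_{r'/M}$ and $C_{Mr'}$. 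The key topological point is the inclusion $F_0\cap F_1\cap F_2\subseteq E(\LL(\A_{rr'}))$: a disconnecting cluster $\mathcal C$ is connected and meets $C_{rr'}$ and $C_1$, hence meets every intermediate circle; if all its loops lie in $A_1$ it disconnects the boundary circles of $A_1$ (contradicting $F_1$), likewise for $A_2$, and otherwise --- since loops contained in $A_1$ never reach radius $r'/M$ and loops contained in $A_2$ never reach radius $Mr'$ --- a sub-arc of $\mathcal C$ joining $C_{r'/M}$ to $C_{Mr'}$ and lying in $\overline B$ is built from loops belonging to a single cluster of $\LL^{(0)}$, so that cluster meets both boundary circles of $B$ (contradicting $F_0$). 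Hence $p(rr')\ge\PP[F_0]\,p(Mr)\,p(Mr')\ge\PP[F_0]\,p(r)\,p(r')$ by monotonicity, and since $p(r'/C)\le 1$ this is $\ge\tfrac1C\,p(r)\,p(r'/C)$ once $C\ge 1/\inf\PP[F_0]$; the cases where $r$ or $r'$ is not small are absorbed by enlarging $C$, using monotonicity and $p(1/2)>0$.

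It remains to bound $\inf_{r,r'}\PP[F_0]$ away from $0$, and this is the only step requiring a genuine estimate. Since $\LL^{(0)}$ is contained in the set of loops of $\LL(\C)$ meeting $\overline B$, and whether some cluster connects the two boundary circles of $B$ depends only on the loops meeting $\overline B$, we get $\PP[F_0]\ge\PP[\text{no cluster of }\LL(\C)\text{ crosses }B]$, which by scale invariance of $\mu^{\mathrm{loop}}$ equals $q(M):=\PP[\text{no cluster of }\LL(\C)\text{ crosses }\A(1/M,M)]$, independent of $r,r'$. Finally $q(M)\to 1$ as $M\to\infty$ because the intensity-$c$ loop soup with $c\le1$ does not percolate: it has no unbounded cluster, so a wide annulus is crossed by a cluster with probability tending to $0$ --- the same phenomenon underlying the a.s.\ local finiteness of the $\CLE_\kappa$ loops (see \cite{SheffieldWernerCLE}). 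Thus $\inf\PP[F_0]\ge q(M)>0$ once $M$ is fixed large enough. I expect making this non-percolation input self-contained --- a multi-scale argument fed by the loop-measure estimates of Result \ref{res::Brownianloopmeasure_estimate} --- to be the main obstacle on this route.

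From (\ref{eqn::aymptoticestimate_p_1}) the asymptotics (\ref{eqn::aymptoticestimate_p_2}) follow by soft analysis. The function $\phi(t):=\log p(e^{-t})$ ($t>0$) is subadditive (submultiplicativity) and $\le0$, so $\phi(t)/t$ converges to $\inf_{s>0}\phi(s)/s=:-\alpha'$, which lies in $(-\infty,0)$: finiteness comes from the reverse bound (iterated it gives $p(r^n)\ge(\mathrm{const})^n p(r)^n$), and $\alpha'>0$ because $p(r)<1$ for $r<1$ forces $p(r)\to0$. Hence $p(r)\ge r^{\alpha'}$ for every $r$; and the reverse bound also forces $\phi(t)+\alpha't$ --- which is nonnegative, subadditive and $o(t)$ --- to be bounded on $(\log C,\infty)$, since its increment over a fixed step is bounded below by a constant that, were it positive, would contradict $o(t)$, and unwinding this bounds the function; this gives $p(r)\le C'r^{\alpha'}$ for small $r$. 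Identifying $\alpha'$ with the explicit value $\alpha=\tfrac{(8-\kappa)(3\kappa-8)}{32\kappa}$ of (\ref{eqn::universal_relation_constants}) is a separate input, obtained from the $\SLE_\kappa$ description of the cluster boundaries (e.g.\ via the scaling exponent governing $u(\eps)$ of (\ref{eqn::ueps_definition}) and one-arm estimates). The exact limit (\ref{eqn::aymptoticestimate_p_3}) does not follow from (\ref{eqn::aymptoticestimate_p_1})--(\ref{eqn::aymptoticestimate_p_2}) alone; for it one upgrades the buffer construction to a separation lemma --- conditionally on not being disconnected, the loop configurations near the two boundary circles of $\A_r$ are ``well separated'' with probability bounded below uniformly in $r$ --- which lets one glue an $\A_r$-configuration to an $\A_{r'}$-configuration with a multiplicative defect tending to $1$ as $r\to0$, after which a Fekete-plus-continuity argument pins down the limit. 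This separation step, as usual in this circle of ideas, carries the remaining technical weight.
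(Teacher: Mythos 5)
You are attempting far more than the paper does: the paper cites \cite{NacuWernerCLECarpets} (Lemma 7 and Corollary 8) for (\ref{eqn::aymptoticestimate_p_1}) and (\ref{eqn::aymptoticestimate_p_2}) and only proves (\ref{eqn::aymptoticestimate_p_3}) itself. Your from-scratch argument has a genuine gap at its central step, the lower bound in (\ref{eqn::aymptoticestimate_p_1}). The inclusion $F_0\cap F_1\cap F_2\subseteq E(\LL(\A_{rr'}))$ is false, because a cluster that disconnects $C_{rr'}$ from $C_1$ need not meet both boundary circles (your premise that it ``meets $C_{rr'}$ and $C_1$, hence meets every intermediate circle'' is wrong): it only needs to contain a non-contractible circuit. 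In particular, a single Brownian loop winding around the origin and contained in the open buffer $B=\A(r'/M,Mr')$ disconnects the annulus, belongs to $\LL^{(0)}$, and violates none of $F_0$, $F_1$, $F_2$, since it meets neither $C_{r'/M}$ nor $C_{Mr'}$. The $\mu^{\mathrm{loop}}$-measure of such loops grows like $\log M$ by scale invariance, so they are present with probability close to $1$ precisely when $M$ is large. Nor can you repair this by strengthening $F_0$ to ``no cluster of $\LL^{(0)}$ disconnects the two boundary circles of $B$'': that event has probability tending to $0$, not $1$, as $M\to\infty$ (which is exactly why $p(r)$ decays polynomially), so the bound $\PP[F_0]\ge q(M)\to 1$ addresses the wrong event. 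A correct gluing (as in Nacu--Werner) has to control the loops crossing a single separating circle, whose origin-surrounding sub-family has \emph{bounded} measure by scale invariance; this is also where the shift $r'/C$ in (\ref{eqn::aymptoticestimate_p_1}) comes from. Your monotonicity and submultiplicativity arguments, by contrast, are fine.

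Two further points. For (\ref{eqn::aymptoticestimate_p_2}), your Fekete argument at best produces some exponent $\alpha'$; the identification $\alpha'=\alpha=\frac{(8-\kappa)(3\kappa-8)}{32\kappa}$, which you defer to ``a separate input,'' is the actual content of the cited Corollary 8 and cannot be waved away. For (\ref{eqn::aymptoticestimate_p_3}), you are right that it does not follow formally from (\ref{eqn::aymptoticestimate_p_1})--(\ref{eqn::aymptoticestimate_p_2}), but the separation-lemma route you sketch is much heavier than the paper's argument: letting $\LL_{\lambda r}$ be a loop soup in $\A(\lambda,1/r)$ and $\LL_r$ its restriction to $\A(1,1/r)$, one has $E(\LL_{\lambda r})\subseteq E(\LL_r)$ and hence $p(\lambda r)/p(r)=\PP[E(\LL_{\lambda r})\cond E(\LL_r)]$; the limit $f(\lambda)$ of this ratio exists and satisfies $f(\lambda\lambda')=f(\lambda)f(\lambda')$, forcing $f(\lambda)=\lambda^{\alpha'}$, and (\ref{eqn::aymptoticestimate_p_2}) then pins $\alpha'=\alpha$. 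You should at least record this shortcut before investing in a quasi-multiplicativity machinery.
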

\begin{proof} Equation (\ref{eqn::aymptoticestimate_p_1}) is proved in \cite[Lemma 7]{NacuWernerCLECarpets} and Equation (\ref{eqn::aymptoticestimate_p_2}) is proved in \cite[Corollary 8]{NacuWernerCLECarpets}, and we will give a short proof of Equation (\ref{eqn::aymptoticestimate_p_3}). Suppose $\LL_{\lambda r}$ is a Brownian loop soup in the annulus $\A(\lambda,1/r)$ and let $\LL_r$ be the collection of loops in $\LL_{\lambda r}$ that are contained in $\A(1,1/r)$. Denote by $E(\LL_{\lambda r})$ (resp. $E(\LL_r)$) the event that there is no cluster in $\LL_{\lambda r}$ (resp. $\LL_r$) that disconnects the origin from infinity.
Note that
\[\frac{p(\lambda r)}{p(r)}=\frac{p(\A(\lambda,1/r))}{p(\A(1,1/r)}=\PP[E(\LL_{\lambda r})\cond E(\LL_r)],\]
thus the limit of $p(\lambda r)/p(r)$ exists as $r\to 0$. We denote this limit by $f(\lambda)$. Then clearly, for any $\lambda,\lambda'\in (0,1)$, we have that
\[f(\lambda\lambda')=f(\lambda)f(\lambda').\]
This implies that there exists some constant $\alpha'>0$ such that $f(\lambda)=\lambda^{\alpha'}$ for all $\lambda\in (0,1)$. From Equation (\ref{eqn::aymptoticestimate_p_2}), we know that $\alpha'=\alpha$.
\end{proof}
By the conformal invariance of the Brownian loop soup, we have the following:
\begin{proposition}
The $\CLE$ in the annulus $\A(r,\frac{1}{r})$ is invariant under $z\mapsto 1/z$.
\end{proposition}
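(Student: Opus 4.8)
The plan is to observe that $z\mapsto 1/z$ is a conformal automorphism of $A:=\A(r,1/r)$, and that every ingredient in the construction of $\CLE$ in the annulus --- the Brownian loop soup in $A$, the conditioning on the event that no cluster disconnects the two boundary circles, and the passage to outer boundaries of outermost clusters --- is transported faithfully by a conformal self-map of $A$. The only substantive input is the conformal invariance of the Brownian loop measure.

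First I would record the elementary geometric fact that $\phi(z)=1/z$ is holomorphic on $\C\setminus\{0\}$ with $|\phi(z)|=1/|z|$, so $\phi$ restricts to a conformal automorphism of $A$ that extends to a homeomorphism of $\overline{A}$ interchanging the two boundary circles $C_r$ and $C_{1/r}$. Next I would recast the definition of $\CLE$ in $A$ so that it refers only to $A$ itself: if $\varphi$ is a conformal map from $\A_{\CM(A)}$ onto $A$, then $\CLE$ in $A$ is by definition the image under $\varphi$ of $\CLE$ in $\A_{\CM(A)}$; by conformal invariance of the Brownian loop measure, the $\varphi$-image of a loop soup in $\A_{\CM(A)}$ is a loop soup $\LL$ in $A$, the event $E$ becomes the event $E(\LL)$ that no cluster of $\LL$ disconnects $C_r$ from $C_{1/r}$, and the $\varphi$-image of the collection of outer boundaries of outermost clusters is the corresponding collection $\Gamma_A(\LL)$ built from $\LL$ --- because ``which loops pairwise intersect'', ``which clusters disconnect the two boundary circles'', ``outermost'', and ``outer boundary'' are notions preserved by any homeomorphism of $A$. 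Hence $\mu^{\sharp}(A)$ is the law of $\Gamma_A(\LL)$ conditioned on $E(\LL)$, for $\LL$ a Brownian loop soup in $A$.

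With this reformulation the proposition follows formally. Conformal invariance of the Brownian loop measure, applied now to the automorphism $\phi$ of $A$, gives $\phi(\LL)\overset{d}{=}\LL$. Since $\phi$ is a homeomorphism of $A$, it maps the clusters of $\LL$ bijectively onto those of $\phi(\LL)$; a cluster disconnects $C_r$ from $C_{1/r}$ if and only if its $\phi$-image does (disconnection of the pair is symmetric, and $\phi$ merely swaps the pair), so $E(\phi(\LL))=E(\LL)$ as events, whence conditioning on $E$ commutes with applying $\phi$. By the same topological naturality, $\Gamma_A(\phi(\LL))=\phi(\Gamma_A(\LL))$. Combining the three facts, the image under $\phi$ of the law of $\Gamma_A(\LL)$ conditioned on $E(\LL)$ equals the law of $\Gamma_A(\phi(\LL))$ conditioned on $E(\phi(\LL))$, which is again the law of $\Gamma_A(\LL)$ conditioned on $E(\LL)$; that is, $\mu^{\sharp}(A)$ is invariant under $z\mapsto 1/z$.

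The one point I would flag explicitly is that ``outermost'' and ``outer boundary of a cluster'' in the annulus must be phrased symmetrically in the two boundary circles, so that they are preserved by the circle-swapping map $\phi$ rather than tied to a distinguished ``outer'' side; this is exactly how $\CLE$ in the annulus was set up (on the event $E$ each cluster $C$ has a unique complementary component of $A\setminus C$ whose closure meets both $C_r$ and $C_{1/r}$, which one uses to define the outer boundary of $C$, declaring $C$ outermost when it lies in no complementary component of another cluster). Granting this, there is no genuine obstacle: the statement is a direct consequence of conformal invariance of the Brownian loop soup.
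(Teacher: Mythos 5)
Your argument is correct and is exactly the one the paper intends: the paper states the proposition as an immediate consequence of the conformal invariance of the Brownian loop soup, and your write-up simply makes explicit the (routine but worth recording) facts that the conditioning event, the clusters, and the outer-boundary/outermost notions are all preserved by the circle-swapping automorphism $z\mapsto 1/z$. No gap, and no genuinely different route.
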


The following is the annulus version of the domain Markov property:
\begin{proposition}\label{prop::annuluscle_domainmarkov}
Suppose that $\Gamma$ is a $\CLE$ in the annulus $\A_r$, and that $D$ is an open subset of $\A_r$. Let $D^*$ be the set obtained by removing from $D$ all the loops (and their interiors) in $\Gamma$ that are not totally contained in $D$.
\begin{enumerate}
\item [(1)] If $D$ is simply connected, then each connected component of $D^*$ is also simply connected; and given $D^*$, for each connected component $U$ of $D^*$, the conditional law of the loops in $\Gamma$ that stay in $U$ is the same as $\CLE$ in $U$.
\item [(2)] If $D$ is an annulus, then the connected components of $D^*$ can be simply connected or annular; and given $D^*$, for each connected component $U$ of $D^*$, the conditional law of the loops in $\Gamma$ that stay in $U$ is the same as $\CLE$ in $U$.
\end{enumerate}
\end{proposition}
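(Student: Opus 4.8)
\emph{The plan is to argue directly with the Brownian loop soup} $\LL=\LL(\A_r)$ from which $\Gamma$ is built, so that $\Gamma$ has the law of $\Gamma(\LL)$ conditioned on $E=E(\LL)$, and to use the restriction property of the loop soup: conditionally on the loops that are \emph{not} contained in a given sub-domain, the loops that \emph{are} contained in it form a loop soup in that sub-domain. Fix a deterministic open set $D\subseteq\A_r$, simply connected in case (1) and an annulus separating $C_r$ from $C_1$ in case (2).

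\textbf{Step 1: a deterministic decomposition of $\LL$.} For a realization of $\LL$, call a loop $\ell\in\LL$ \emph{exterior} if the outermost cluster of $\LL$ that equals or surrounds the cluster of $\ell$ has outer boundary not contained in $D$, and \emph{interior} otherwise; write $\LL=\LL^{\mathrm{in}}\sqcup\LL^{\mathrm{ext}}$. The first task is the (purely topological) bookkeeping. Distinct outermost clusters have disjoint, non-nested outer boundaries; and on $E$ no loop of $\Gamma(\LL)$ can wind around $C_r$, so in case (2) no loop of $\Gamma(\LL)$ contained in $D$ is non-contractible in $D$, whence the filled interior of such a loop is again contained in $D$. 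From these one checks: every exterior loop lies in the filled interior of some loop of $\Gamma(\LL)$ that meets $\A_r\setminus D$; the set $D^*$ is $D$ with a disjoint locally finite family of such ``full'' continua removed, so its connected components are simply connected when $D$ is, and at most one of them is annular when $D$ is the annulus of case (2); $D^*$ is a measurable function of $\LL^{\mathrm{ext}}$ alone; $\LL^{\mathrm{in}}$ is exactly the set of loops of $\LL$ contained in $D^*$, and so decomposes as $\bigsqcup_U\LL_U$ over the components $U$ of $D^*$, with $\LL_U$ the set of loops of $\LL$ contained in $U$; and, for each $U$, the loops of $\Gamma(\LL)$ that stay inside $U$ are precisely the outer boundaries of the outermost clusters of $\LL_U$ --- that is, the loop-soup CLE construction performed inside $U$.

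\textbf{Step 2: the restriction property and the conditioning.} Conditionally on $\LL^{\mathrm{ext}}$, the family $(\LL_U)_U$ consists of independent Brownian loop soups, $\LL_U$ being a loop soup in $U$; this is the loop-soup version of the strong Markov property for a stopping domain, and follows from the ordinary restriction property together with the $\sigma(\LL^{\mathrm{ext}})$-measurability of $D^*$ established in Step 1, by approximating $D^*$ from within by deterministic domains from a fixed countable family --- the mechanism is the same one used for the domain Markov property of loop-soup CLE in \cite{SheffieldWernerCLE}. It remains to reinstate the conditioning on $E$. The event $E$ factorizes as $E=E^{\mathrm{ext}}\cap\bigcap_U E_U$, where $E^{\mathrm{ext}}\in\sigma(\LL^{\mathrm{ext}})$ says that no cluster of exterior loops disconnects $C_r$ from $C_1$, and $E_U\in\sigma(\LL_U)$ says that no cluster of $\LL_U$ does. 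If $U$ is simply connected, every cluster of $\LL_U$ lies inside a topological disc contained in $\A_r$ and therefore disconnects nothing, so $E_U$ holds automatically. If $U$ is annular then (by Step 1) it separates $C_r$ from $C_1$, so a cluster of $\LL_U$ disconnects $C_r$ from $C_1$ precisely when it separates the two boundary components of $U$, and hence $E_U$ is exactly the conditioning event in the definition of $\CLE$ in the annulus $U$. Therefore, conditioning additionally on $E$, the family $(\LL_U)_U$ stays independent with $\LL_U$ a loop soup in $U$ conditioned on $E_U$; combined with Step 1, the loops of $\Gamma$ that stay in $U$ form a $\CLE$ in $U$ --- the ordinary $\CLE$ when $U$ is simply connected (this gives (1), with $E_U$ trivial) and the annulus $\CLE$ when $U$ is annular (this gives (2)) --- and these are independent across $U$. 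Since this conditional law depends on $\LL^{\mathrm{ext}}$ only through $D^*$, the same description holds conditionally on $D^*$ itself.

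\textbf{The main obstacle} is the deterministic bookkeeping of Step 1: that a cluster of $\LL$ whose outer boundary lies in $D$ is genuinely contained in $D^*$ (this is exactly where the conditioning on $E$, via the absence of loops winding around $C_r$, is essential), that the components of $D^*$ carry exactly the loop-soup clusters described, and, in case (2), that the unique possible annular component of $D^*$ inherits the property of separating $C_r$ from $C_1$, so that its conditioning event matches the definition of $\CLE$ in an annulus. The probabilistic content (the strong restriction property of Step 2) is comparatively routine once this is in hand.
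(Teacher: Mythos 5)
Your proposal is correct and follows essentially the same route as the paper: both pass to the Brownian loop soup, use its restriction/independence property, and factor the conditioning event $E$ into a part carried by the loops inside a component $U$ (vacuous when $U$ is simply connected, exactly the annulus-$\CLE$ conditioning when $U$ is annular) and a part carried by the remaining loops. The paper makes the conditioning on the random set $D^*$ rigorous by fixing a deterministic lattice approximation $V_n$ of $U$ and conditioning on $\{U_n=V_n\}$, which is precisely the approximation mechanism you invoke by reference in Step 2.
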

\begin{proof}
We only prove the case when both $D$ and $U$ are annuli. Other cases can be proved similarly.
Let $U_n\subset U$ be an approximation of $U$ whose boundary is a simple path in the lattice $2^{-n}\Z^2$ (see Figure \ref{fig::annuluscle_domainmarkov}). Suppose $F$ is any bounded function on loop configurations that only depends on macroscopic loops (i.e. the loops with diameter greater than $4\times 2^{-n}$).
Then, for any deterministic set $V_n$ such that the probability of $\{U_n=V_n\}$ is strictly positive, we only need to show that, when $\Gamma$ is a $\CLE$ in the annulus $\A_r$, and $\Gamma|_{V_n}$ is the collection of loops of $\Gamma$ that are contained in $V_n$, we have that
\[\mu^{\sharp}_{\A_r}[F(\Gamma|_{V_n}) \cond U_n=V_n]=\mu^{\sharp}_{V_n}[F].\]

\begin{figure}[ht!]
\begin{center}
\includegraphics[width=0.23\textwidth]{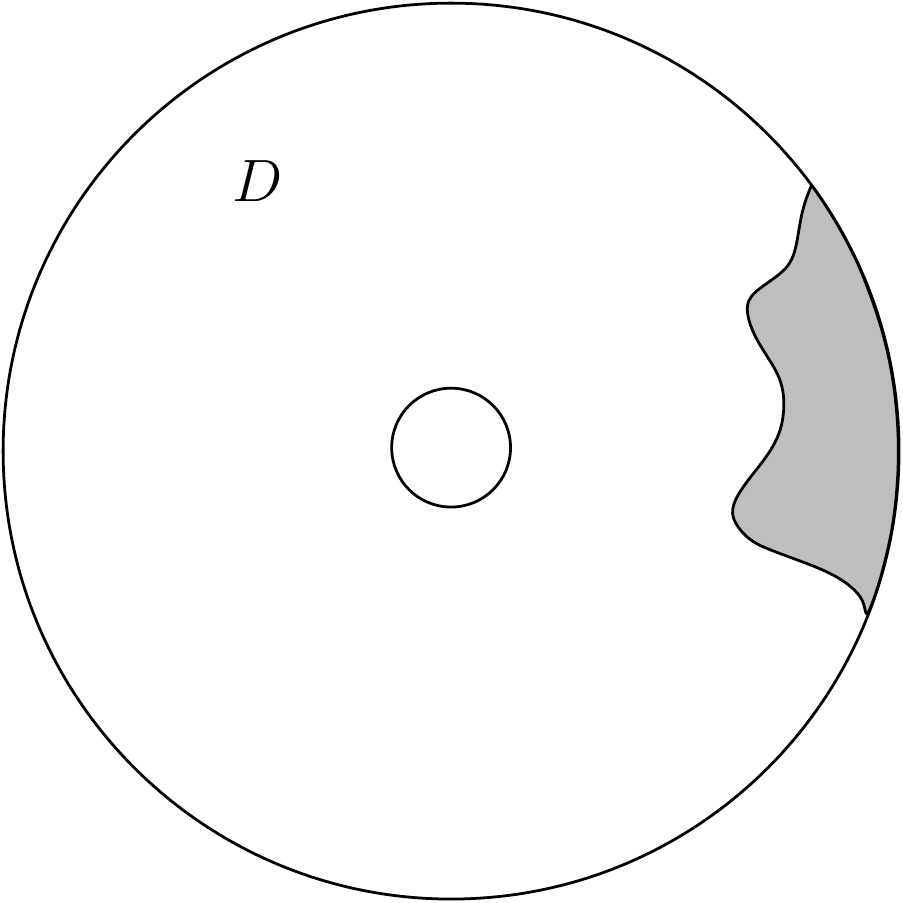}
\includegraphics[width=0.23\textwidth]{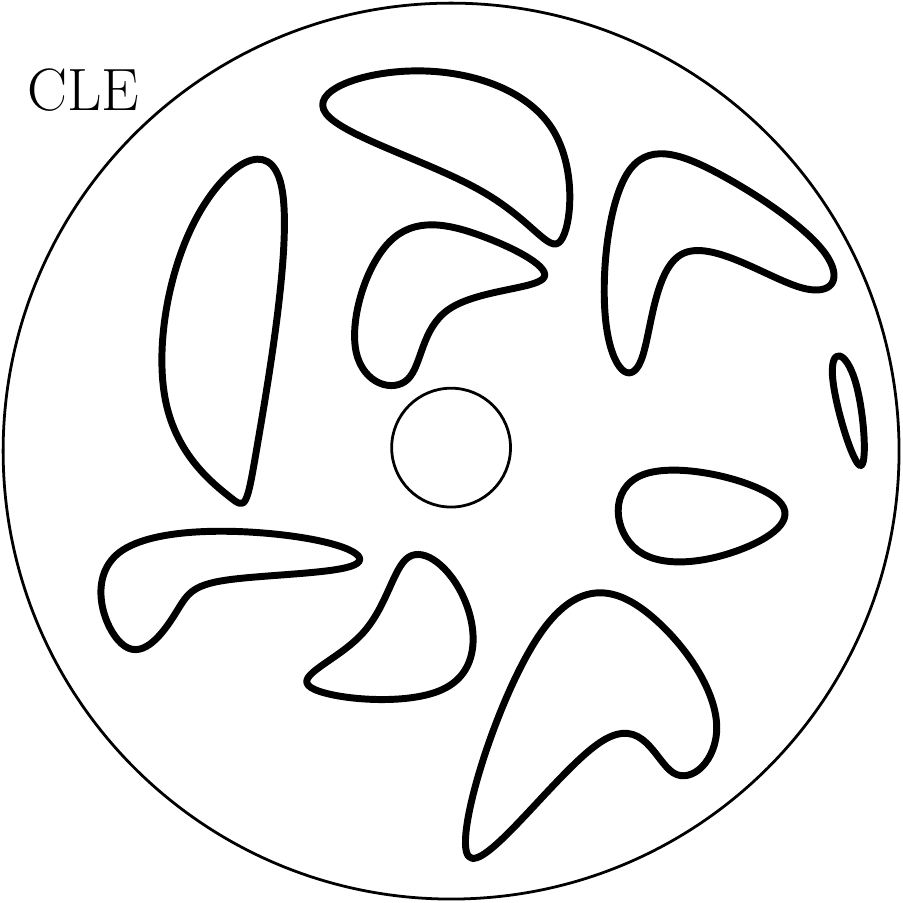}
\includegraphics[width=0.23\textwidth]{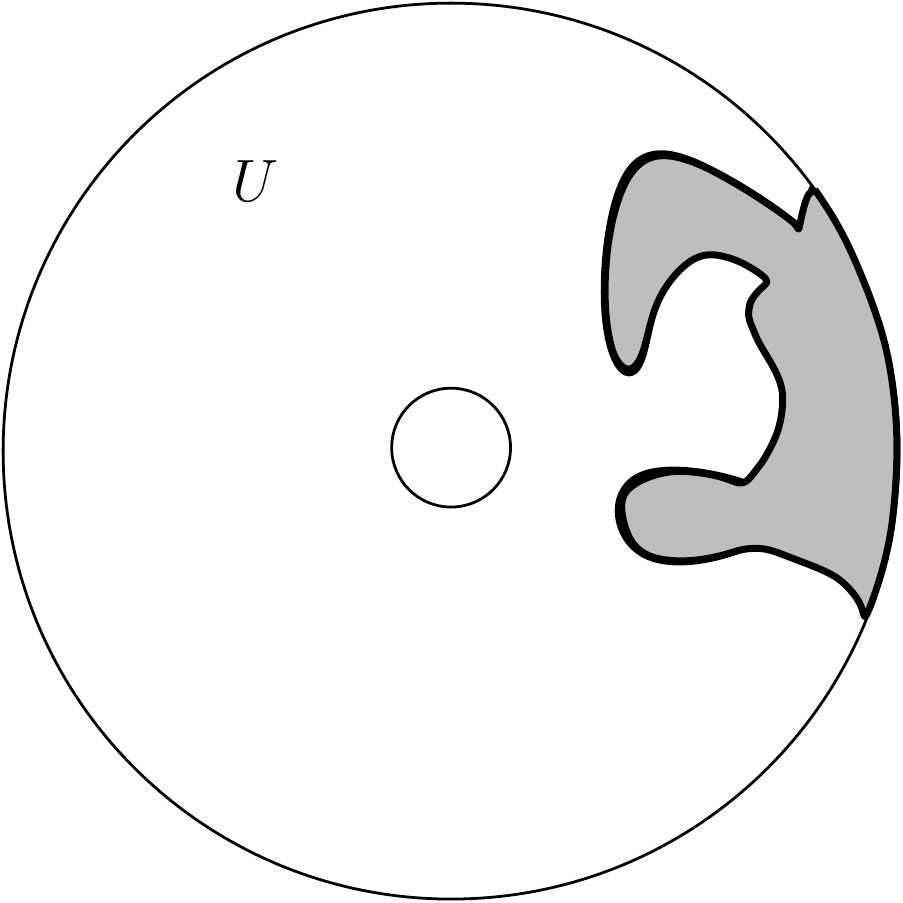}
\includegraphics[width=0.23\textwidth]{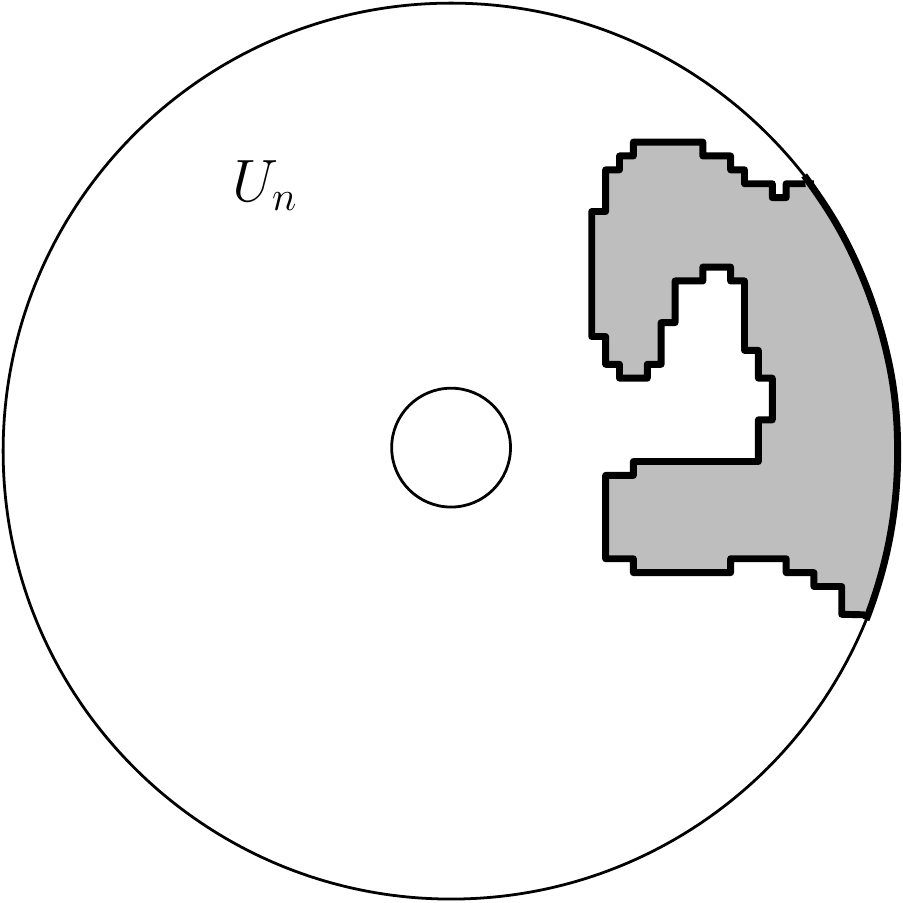}
\end{center}
\caption{\label{fig::annuluscle_domainmarkov} The first panel indicates the annular region $D$. The second panel indicates a $\CLE$ loop configuration in the annulus $\A_r$. The third panel indicates one annular connected component $U$ of $D^*$. The last panel indicates the approximation $U_n$ of $U$.}
\end{figure}

Suppose $\LL$ is a Brownian loop soup in $\A_r$. Let $E(\LL)$ be the event that no cluster of $\LL$ that disconnects $C_r$ from $C_1$, and let $\Gamma(\LL)$ be the collection of outer boundaries of outermost clusters of $\LL$. Then we have that
\begin{eqnarray*}
\lefteqn{\mu^{\sharp}_{\A_r}[F(\Gamma|_{V_n}) 1_{U_n=V_n}]}\\
&=&\E[F(\Gamma(\LL)|_{V_n})1_{U_n=V_n} 1_{E(\LL)}]/p(r)\\
&=&\E[F(\Gamma(\LL)|_{V_n})1_{U_n=V_n} 1_{E_1}1_{E_2}]/p(r),
\end{eqnarray*}
where the events $E_1,E_2$ are defined in the following way:
Consider the loops in $\Gamma(\LL)$ that are contained in $V_n$, the event $E_1$ is that no loop disconnects $C_r$ from $C_1$. Consider the loops in $\Gamma(\LL)$ that are not totally contained in $V_n$, the event $E_2$ is that no loop disconnects $C_r$ from $C_1$. Note that the event $E_1$ is measurable with respect to $\LL|_{V_n}$, which are the loops of $\LL$ that are contained in $V_n$. The event $E_2$ is measurable with respect to the event $\{U_n=V_n\}$ which is independent of $\LL|_{V_n}$. Thus we have
\begin{eqnarray*}
\lefteqn{\E[F(\Gamma(\LL)|_{V_n})1_{U_n=V_n} 1_{E_1}1_{E_2}]/p(r)}\\
&=&\E[\mu^{\sharp}_{V_n}[F]p(V_n)1_{U_n=V_n}1_{E_2}]/p(r)\\
&=&\mu^{\sharp}_{V_n}[F]p(V_n)\PP[U_n=V_n, E_2]/p(r).
\end{eqnarray*}
Thus
\begin{eqnarray*}
\lefteqn{\mu^{\sharp}_{\A_r}[F(\Gamma|_{V_n}) \cond U_n=V_n]}\\
&=&\frac{\mu^{\sharp}_{\A_r}[F(\Gamma|_{V_n})1_{U_n=V_n}]}{\mu^{\sharp}_{\A_r}[1_{U_n=V_n}]}\\
&=&\frac{\mu^{\sharp}_{V_n}[F]p(V_n)\PP[U_n=V_n, E_2]/p(r)}{p(V_n)\PP[U_n=V_n, E_2]/p(r)}\\
&=&\mu^{\sharp}_{V_n}[F].
\end{eqnarray*}
\end{proof}

Propositions \ref{prop::simple_annulus_cle_domainmarkov} and \ref{prop::simple_annulus_cle_origin} describe two ways to find $\CLE$ in annuli from $\CLE$ in simply connected domains.

\begin{proposition}\label{prop::simple_annulus_cle_domainmarkov}
Suppose $\Gamma$ is a $\CLE$ in $\D$ and $D\subset \D$ is an annulus. Let $D^*$ be the set obtained by removing from $D$ all the loops (and their interiors) in $\Gamma$ that are not totally contained in $D$. Note that the connected components of $D^*$ can be simply connected or annular. Then given $D^*$, for each connected component $U$ of $D^*$, the conditional law of the loops in $\Gamma$ that stay in $U$ is the same as $\CLE$ in $U$.
\end{proposition}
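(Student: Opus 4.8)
The plan is to run the argument of the proof of Proposition \ref{prop::annuluscle_domainmarkov} with the Brownian loop soup now living in the simply connected domain $\D$ --- so that $\Gamma=\Gamma(\LL)$ is the collection of outer boundaries of outermost clusters of a loop soup $\LL$ of intensity $c$ in $\D$, \emph{with no conditioning} --- instead of living in an annulus. I will describe the case in which $U$ is annular; the simply connected case is strictly easier and is handled the same way, the conditioning event introduced below being vacuous in that case.

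First I would discretise exactly as in the proof of Proposition \ref{prop::annuluscle_domainmarkov}: approximate the annular component $U$ of $D^*$ from inside by a lattice annulus $U_n$ whose boundary is a simple path in $2^{-n}\Z^2$. Because the law of a locally finite loop configuration is determined by the laws of its macroscopic loops, it suffices to fix a deterministic lattice annulus $V_n$ with $\PP[U_n=V_n]>0$ and a bounded $F$ depending only on loops of diameter greater than $4\cdot 2^{-n}$, and to show
\[\mu^{\sharp}_{\D}\!\left[F(\Gamma|_{V_n})\cond U_n=V_n\right]=\mu^{\sharp}_{V_n}[F],\]
where $\Gamma|_{V_n}$ denotes the loops of $\Gamma$ contained in $V_n$ and $\mu^{\sharp}_{V_n}$ is the law of $\CLE$ in the annulus $V_n$.

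Next I would split the loop soup by the restriction property, $\LL=\LL_1\sqcup\LL_2$, with $\LL_1=\LL|_{V_n}$ the loops of $\LL$ contained in $V_n$ (a loop soup of intensity $c$ in $V_n$) and $\LL_2$ the remaining loops, the two being independent; write $E_1$ for the $\LL_1$-measurable event that no cluster of $\LL_1$ disconnects the two boundary components of $V_n$, so that $\PP[E_1]=p(V_n)$ and, by definition, $\mu^{\sharp}_{V_n}[\,\cdot\,]=\E[\,\cdot\mid E_1]$ for $\CLE$ in the annulus $V_n$. The argument then rests on the claim that, on the event $\{U_n=V_n\}$, the loops of $\Gamma$ contained in $V_n$ coincide with the outer boundaries of the outermost clusters of $\LL_1$, and that $\{U_n=V_n\}=E_1\cap E_2$ for some $\LL_2$-measurable event $E_2$. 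Granting this, and using that $F(\Gamma|_{V_n})$ is then a function of $\LL_1$ while $1_{E_2}$ is a function of $\LL_2$, the computation at the end of the proof of Proposition \ref{prop::annuluscle_domainmarkov} goes through: $\mu^{\sharp}_{\D}[F(\Gamma|_{V_n})1_{U_n=V_n}]=\mu^{\sharp}_{V_n}[F]\,p(V_n)\,\PP[E_2]$, and dividing by the same identity with $F\equiv 1$ gives the conclusion.

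The main obstacle is precisely this claim, which is the one point where the situation genuinely differs from that of Proposition \ref{prop::annuluscle_domainmarkov}: there the ambient soup was already confined to an annulus, whereas here a cluster of $\LL$ may straddle $\partial V_n$, so a priori a loop of $\Gamma$ contained in $V_n$ need not be the outer boundary of an outermost cluster of $\LL_1$, and a disconnecting cluster of $\LL_1$ need not be incompatible with $\{U_n=V_n\}$. To handle this I would use that, on $\{U_n=V_n\}$, the loops of $\Gamma$ that are \emph{not} contained in $D$ --- exactly those removed, together with their interiors, in forming $D^*$ --- separate $V_n$ from the rest of $D$; hence no loop of $\LL_2$ enters $V_n$, no cluster of $\LL$ that meets $V_n$ is joined to a loop outside $V_n$, and the only constraint that $\{U_n=V_n\}$ places on $\LL_1$ is that no cluster of $\LL_1$ surrounds the hole of $V_n$, i.e.\ $E_1$. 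Making this precise --- including a careful treatment of the loops of $\Gamma$ that surround the hole of $V_n$, and the use, as in Proposition \ref{prop::annuluscle_domainmarkov}, of the elementary fact that a compact connected subset of an annulus separates the two boundary components if and only if its outer boundary loop does --- is where the real work lies.
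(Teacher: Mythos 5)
Your proposal follows essentially the same route as the paper's proof: approximate the annular component $U$ from inside by a lattice annulus $V_n$, reduce to showing $\E[F(\Gamma|_{V_n})\cond U_n=V_n]=\mu^{\sharp}_{V_n}[F]$ for $F$ depending only on macroscopic loops, split the soup into $\LL_1=\LL|_{V_n}$ and $\LL_2$, and factor $\E[F(\Gamma|_{V_n})1_{U_n=V_n}]=\mu^{\sharp}_{V_n}[F]\,p(V_n)\,\PP[\cdots]$ --- and the key independence step you honestly defer is precisely the step the paper itself only asserts (``we can see that, given $E(\LL|_{V_n})$, the event $\{U_n=V_n\}$ is conditionally independent of $\LL|_{V_n}$''), so your level of detail matches the paper's. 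One caveat: your supporting assertion that on $\{U_n=V_n\}$ no loop of $\LL_2$ enters $V_n$ is too strong --- a loop of $\LL$ whose cluster is totally contained in $D$ (hence never removed in forming $D^*$) may straddle $\partial V_n$ while remaining compatible with $\{U_n=V_n\}$ --- so what must actually be argued is the weaker statement that such loops cannot alter which macroscopic outermost clusters have outer boundary contained in $V_n$, which is exactly the point the paper also leaves implicit.
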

\begin{proof}
We only prove the case when $U$ is an annulus. Suppose $\LL$ is a Brownian loop soup in $\D$, and let $\Gamma$ be the collection of the outer boundaries of outermost clusters of $\LL$. Then $\Gamma$ has the law of simple $\CLE$ in $\D$. Suppose $U_n\subset U$ is the approximation of $U$ whose boundary is a simple path in the lattice $2^{-n}\Z^2$ (see Figure \ref{fig::simple_annulus_cle_domainmarkov}). Suppose $F$ is any bounded function on loop configurations that only depends on macroscopic loops (i.e. the loops with diameter greater than $4\times 2^{-n}$). Then, for any deterministic annular set $V_n$ such that the probability of $\{U_n=V_n\}$ is strictly positive, we only need to show that
\begin{equation}\label{eqn::simple_annulus_cle_domainmarkov_goal}
\E[F(\Gamma|_{V_n})\cond U_n=V_n]=\mu^{\sharp}_{V_n}[F],\end{equation}
where $\Gamma|_{V_n}$ is the collection of loops of $\Gamma$ that are contained in $V_n$.

\begin{figure}[ht!]
\begin{center}
\includegraphics[width=0.23\textwidth]{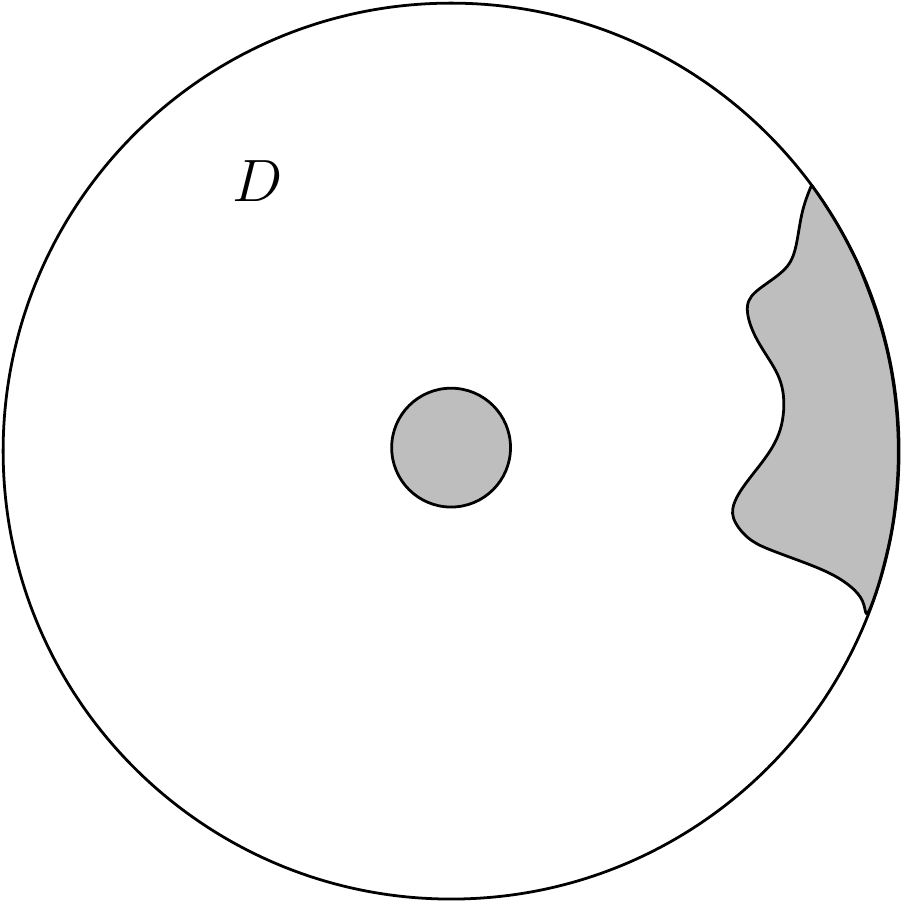}
\includegraphics[width=0.23\textwidth]{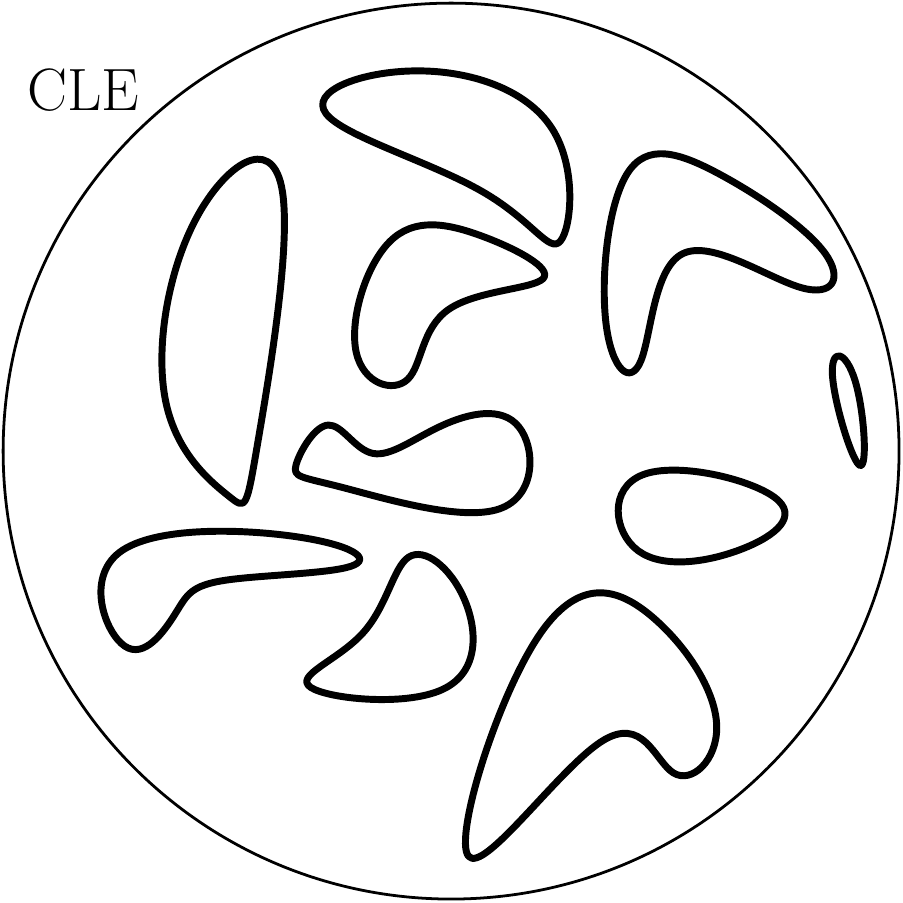}
\includegraphics[width=0.23\textwidth]{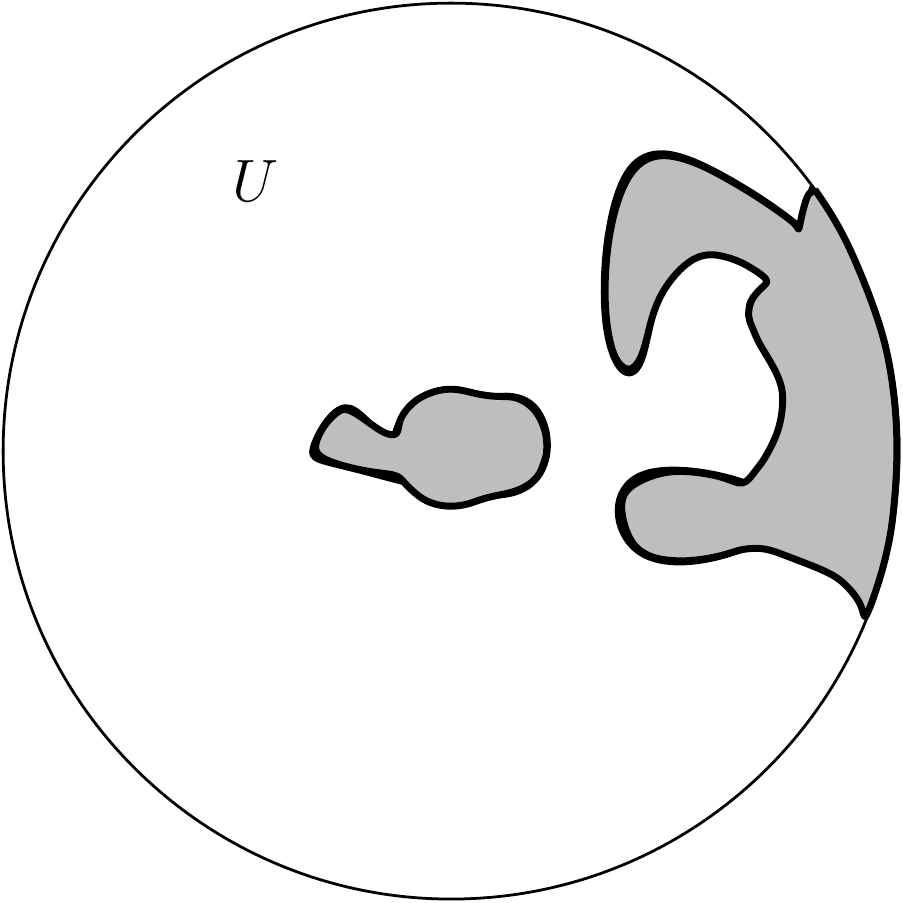}
\includegraphics[width=0.23\textwidth]{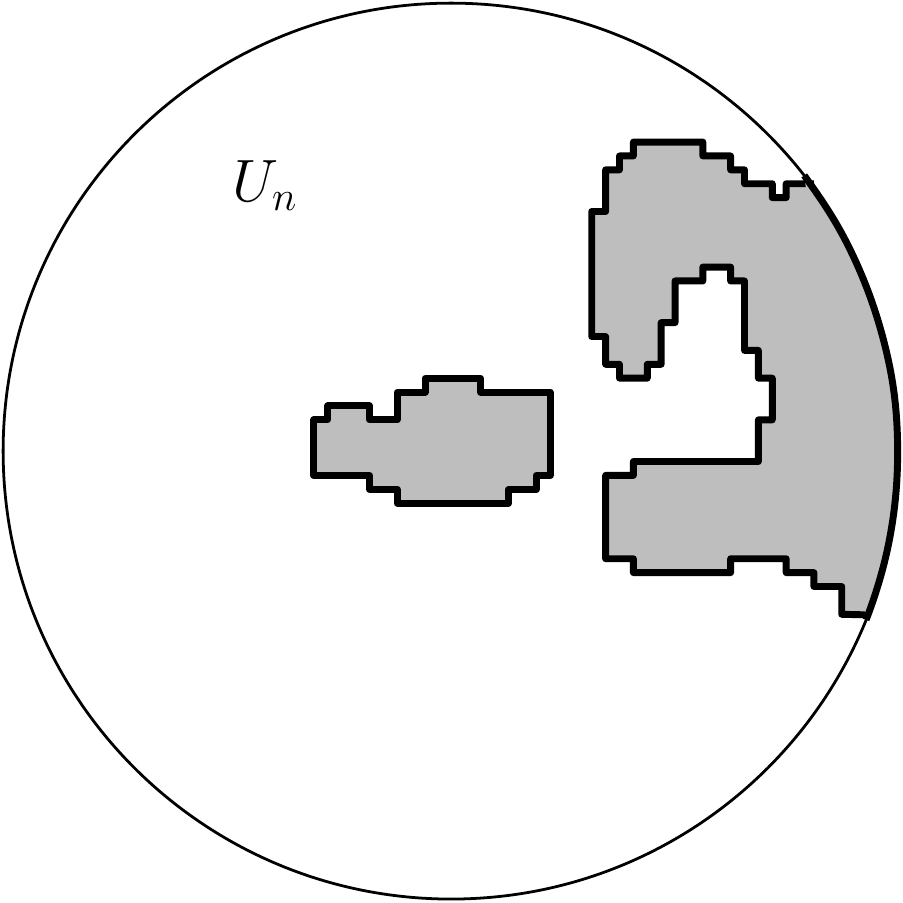}
\end{center}
\caption{\label{fig::simple_annulus_cle_domainmarkov} The first panel indicates the annulus region $D$. The second panel indicates a simple $\CLE$ loop configuration. The third panel indicates one annulus connected component $U$ of $D^*$. The last panel indicates the approximation $U_n$ of $U$.}
\end{figure}

Denote by $\LL|_{V_n}$ the collection of loops in $\LL$ that are totally contained in $V_n$; and denote by $E(\LL|_{V_n})$ the event that there is no loop cluster in $\LL|_{V_n}$ disconnecting the two components of the boundary of $V_n$. Then we can see that, given $E(\LL|_{V_n})$, the event $[U_n=V_n]$ is conditionally independent of $\LL|_{V_n}$. Thus we have that
\[\E[F(\Gamma|_{V_n})1_{U_n=V_n}]=\mu^{\sharp}_{V_n}[F]p(V_n)\PP[U_n=V_n\cond E(\LL|_{V_n})].\]
This implies Equation (\ref{eqn::simple_annulus_cle_domainmarkov_goal}).
\end{proof}

\begin{proposition}\label{prop::simple_annulus_cle_origin}
Suppose $\Gamma$ is a $\CLE$ in $\D$ and $\gamma(0)$ is the loop in $\Gamma$ that contains the origin. Let $D^*$ be the subset of $\D$ obtained by removing from $\D$ the loop $\gamma(0)$ and its interior. Then given $D^*$, the conditional law of the loops in $\Gamma$ that stay in $D^*$ is the same as $\CLE$ in the annulus $D^*$.
\end{proposition}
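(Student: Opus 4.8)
The plan is to deduce the proposition from Proposition~\ref{prop::simple_annulus_cle_domainmarkov} by a limiting argument. Fix a small deterministic $\delta>0$ and apply that proposition with $D=\A(\delta,1)$, letting $D^{**}$ be the set obtained by removing from $\A(\delta,1)$ every loop of $\Gamma$ not contained in it, together with its interior. On the event $\{\dist(0,\gamma(0))>\delta\}$ one checks that the removed loops are exactly the loops of $\Gamma$ meeting $\overline{B(0,\delta)}$, that these lie strictly inside $\gamma(0)$, that $\gamma(0)$ itself is not removed, and hence that the connected component of $D^{**}$ containing a neighbourhood of $\partial\D$ is precisely $D^*=\D\setminus\overline{\inte(\gamma(0))}$. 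So, conditionally on $D^{**}$, the loops of $\Gamma$ lying in $D^*$ form a $\CLE$ in the annulus $D^*$. Since $\{\dist(0,\gamma(0))>\delta\}$ has probability tending to $1$ and $D^{**}$ is a measurable function of $\gamma(0)$ together with the loops of $\Gamma$ inside $\gamma(0)$, it suffices to prove that, conditionally on $\gamma(0)$, the loops of $\Gamma$ inside $\gamma(0)$ are independent of the loops of $\Gamma$ lying in $D^*$; this upgrades the conditioning on $D^{**}$ to the conditioning on $\gamma(0)$ alone.

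This independence is transparent from the Brownian loop soup construction, which also yields a second, self-contained proof of the proposition. Let $\LL$ be a loop soup of intensity $c$ in $\D$, $\Gamma=\Gamma(\LL)$, $\mathcal C_0$ the outermost cluster with outer boundary $\gamma(0)$, and $\hat I=\overline{\inte(\gamma(0))}$. Almost surely every loop of $\LL$ lies either in $\hat I$ or in $D^*$ (a loop meeting both would meet $\gamma(0)\subset\overline{\mathcal C_0}$, which a.s.\ no loop outside $\mathcal C_0$ does), the outermost clusters of $\LL$ lying in $D^*$ are exactly the outermost clusters of the loops of $\LL$ in $D^*$, and for a fixed loop $g$ the event $\{\gamma(0)=g\}$ is the intersection of an event depending only on the loops of $\LL$ contained in $\overline{\inte(g)}$ (they form a cluster whose outer boundary is $g$ and which surrounds the origin) with an event depending only on the loops of $\LL$ contained in $\D\setminus\overline{\inte(g)}$ (no cluster of theirs surrounds $g$). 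The key observation is that a cluster of loops contained in an annulus disconnects its two boundary components if and only if it surrounds the inner one; applied to $D^*$ this identifies ``no cluster of the loops outside $g$ surrounds $g$'' with ``the loops outside $g$ have no disconnecting cluster'', which is exactly the conditioning that defines $\CLE$ in the annulus $D^*$. By the restriction and independence properties of the loop soup recalled in Section~\ref{subsec::preliminaries_brownianloopsoup}, this gives: conditionally on $\gamma(0)$, the loops of $\LL$ in $D^*$ form a loop soup in $D^*$ conditioned on having no disconnecting cluster, independent of the loops of $\LL$ in $\hat I$; taking outer boundaries of outermost clusters yields the conditional law $\mu^{\sharp}(D^*)$, and the required independence as well.

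The main obstacle is that $\gamma(0)$ --- and with it the partition of $\LL$ into its parts in $\hat I$ and in $D^*$, as well as the domain $D^*$ itself --- is a functional of the \emph{entire} loop soup rather than of a prescribed subfamily, so the restriction/independence statements cannot be invoked verbatim on the ``fixed $g$'' decomposition above; it has to be combined with a discretisation exactly as in the proof of Proposition~\ref{prop::simple_annulus_cle_domainmarkov}. Namely, approximate $D^*$ from the inside by a deterministic annular lattice domain $V_n$, so that $D^*_n$ is a function of $\gamma(0)$; observe that $\LL|_{V_n}$, the loops of $\LL$ contained in $V_n$, is an independent loop soup in $V_n$; and show, for every bounded $F$ depending only on loops of diameter greater than $4\cdot 2^{-n}$,
\[
\mu^{\sharp}_{\D}\bigl[F(\Gamma|_{V_n})\cond D^*_n=V_n\bigr]=\mu^{\sharp}_{V_n}[F],
\]
the crux being that, given the event $E(\LL|_{V_n})$ that no cluster of $\LL|_{V_n}$ disconnects $V_n$, the event $\{D^*_n=V_n\}$ is conditionally independent of $\LL|_{V_n}$ --- by the key observation, the only channel through which the loops in $V_n$ could affect the constraint defining $\gamma(0)$ is a disconnecting cluster of $\LL|_{V_n}$, which $E(\LL|_{V_n})$ forbids. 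Letting $n\to\infty$ then gives the proposition.
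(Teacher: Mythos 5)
Your first paragraph is, in outline, exactly the paper's proof, which consists of the single sentence ``set $D=\A_r$ in Proposition \ref{prop::simple_annulus_cle_domainmarkov} and let $r\to 0$'' --- but your bookkeeping of which loops get removed is wrong, and as written it breaks the deduction. The $\CLE$s here are non-nested, so no loop of $\Gamma$ lies strictly inside $\gamma(0)$; on the event $\{\dist(0,\gamma(0))>\delta\}$ the \emph{only} loop of $\Gamma$ that is not totally contained in $\A(\delta,1)$ is $\gamma(0)$ itself, because its interior contains the origin (this is the reading of ``totally contained'' under which Proposition \ref{prop::simple_annulus_cle_domainmarkov} is both true and useful). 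Your claims that the removed loops ``lie strictly inside $\gamma(0)$'' and that ``$\gamma(0)$ itself is not removed'' are therefore incorrect, and they are inconsistent with your own conclusion: if $\gamma(0)$ were not removed, the component of $D^{**}$ containing a neighbourhood of $\partial\D$ would contain $\gamma(0)$ and would not equal $D^*=\D\setminus\overline{\inte(\gamma(0))}$, so Proposition \ref{prop::simple_annulus_cle_domainmarkov} would hand you a $\CLE$ in the wrong annulus. Once corrected, $D^{**}$ coincides with $D^*$ on the good event and is a measurable function of $\gamma(0)$ alone, so the ``upgrade by independence'' you then request is vacuous and the argument closes immediately, as in the paper.

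Your second, self-contained proof is correct and rescues the proposal; it is essentially a re-run of the paper's proof of Proposition \ref{prop::simple_annulus_cle_domainmarkov} specialized to the annulus $D^*$, at the same level of rigor (the conditional independence of $\{D^*_n=V_n\}$ and $\LL|_{V_n}$ given $E(\LL|_{V_n})$ is asserted with the same brevity there). Its genuine added value is making explicit what the paper leaves implicit: for clusters of loops contained in the annulus $\D\setminus\overline{\inte(\gamma(0))}$, ``disconnecting the two boundary components'' is equivalent to ``surrounding $\gamma(0)$,'' which is exactly why the conditioning inherited from the event $\{\gamma(0)=g\}$ matches the conditioning defining $\CLE$ in the annulus. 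That observation is worth keeping.
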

\begin{proof}
The conclusion can be derived by setting $D=\A_r$ in Proposition \ref{prop::simple_annulus_cle_domainmarkov} and then letting $r$ go to zero.
\end{proof}

\subsection{The SLE bubble measure in the annulus}
Suppose $\Gamma$ is a simple $\CLE$ in $\D$. Recall the discrete exploration of $\Gamma$: we fix $x\in\partial\D$, and explore the loops in $\Gamma$ that intersect $B(x,\eps)$, suppose $\gamma^{\eps}$ is the loop we discovered with largest radius. The probability of the event that $\gamma^{\eps}$ surrounds the origin is $u(\eps)=\eps^{\beta+o(1)}$ and the law of $\gamma^{\eps}$ normalized by $1/u(\eps)$ converges to the bubble measure $\nu^{bub}_{\D;x}$ (recall Proposition \ref{res::bubble_simple_cle}).

We use a similar idea to define the bubble measure of $\CLE$ in the annulus. Fix $r\in (0,1)$ and suppose $\Gamma_r$ is a $\CLE$ in the annulus $\A_r$. We fix $x\in\partial\D$ and explore the loops in $\Gamma_r$ that intersect $B(x,\eps)$; suppose $\gamma_r^{\eps}$ is the loop we discovered with largest radius. Then we have the following conclusion which is a counterpart of Proposition \ref{res::bubble_simple_cle} for $\CLE$ in the annulus (recall the definitions of $u(\eps)$ in Equation (\ref{eqn::ueps_definition}) and the constant $c$ in Equation (\ref{eqn::universal_relation_constants})):
\begin{proposition}\label{prop::bubble_annulus_cle}
The law of $\gamma_r^{\eps}$ normalized by $1/u(\eps)$ converges to a bubble measure in $\A_r$, denoted as $\nu^{bub}_{\A_r;x}$ which we call \textbf{$\SLE$ bubble measure in $\A_r$ rooted at $x$}. Furthermore, the Radon-Nikodym derivative between $\nu^{bub}_{\A_r;x}$ and $\nu^{bub}_{\D;x}$ is given by
\[
\frac{\nu^{bub}_{\A_r;x}}{\nu^{bub}_{\D;x}}[d\gamma]= 1_{\{\gamma\subset \A_r\}\cap E(\gamma)}\frac{p(\A_r\setminus\gamma)}{p(\A_r)}\exp(c\Lambda(r\D,\gamma;\D))\]
where $E(\gamma)$ is the event that $\gamma$ does not surround the origin and $\A_r\setminus\gamma$ indicates the subset of $\A_r$ obtained by removing $\gamma$ and its interior from $\A_r$.
\end{proposition}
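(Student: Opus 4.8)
The plan is to realize both $\nu^{bub}_{\D;x}$ and $\nu^{bub}_{\A_r;x}$ from one and the same Brownian loop soup and compare them directly; the Radon--Nikodym formula then drops out, and the convergence statement follows from Proposition~\ref{res::bubble_simple_cle}. Let $\LL$ be a Brownian loop soup of intensity $c$ in $\D$, and write $\LL=\LL^{in}\sqcup\LL^{out}$, where $\LL^{in}$ is the set of loops meeting $r\ov{\D}$ and $\LL^{out}$ the set of loops contained in $\A_r$; by the properties recalled in Section~\ref{subsec::preliminaries_brownianloopsoup} these two are independent and $\LL^{out}$ is a loop soup of intensity $c$ in $\A_r$. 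The $\CLE$ in $\D$ is read off $\LL$ (outer boundaries of outermost clusters), and the $\CLE$ in $\A_r$ off $\LL^{out}$ conditioned on $E:=E(\LL^{out})$, the event that no cluster of $\LL^{out}$ disconnects $C_r$ from $C_1$. Let $\gamma^\eps$, resp.\ $\widetilde\gamma_r^\eps$, be the largest-radius loop meeting $B(x,\eps)$ among the outer boundaries of outermost clusters of $\LL$, resp.\ of $\LL^{out}$; then $\gamma_r^\eps$ has the law of $\widetilde\gamma_r^\eps$ under $\PP[\,\cdot\mid E]$, and $\PP[E]=p(\A_r)$.

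First I would establish the identity, valid for $\nu^{bub}_{\D;x}$-almost every bubble $\gamma$ rooted at $x$ (write $\mathcal K:=\ov{\inte(\gamma)}$ for its filled hull; note that $\gamma\subset\A_r$ together with $E(\gamma)$ amounts to $\mathcal K$ meeting $\ov{\D}\setminus\A_r$ only at $x$, in which case $\mathcal K$ is at positive distance from $r\ov\D$):
\[
\PP[\widetilde\gamma_r^\eps\in d\gamma\mid E]
= \PP[\gamma^\eps\in d\gamma]\cdot 1_{\{\gamma\subset\A_r\}\cap E(\gamma)}\cdot\frac{p(\A_r\setminus\gamma)}{p(\A_r)}\,\exp\!\big(c\,\Lambda(r\D,\gamma;\D)\big)\,\big(1+o_\eps(1)\big).
\]
This splits into two ingredients, each of which I would argue as follows. (a) \emph{Unconditioned soups.} On $\{\gamma^\eps=\gamma\}$ with $\gamma\subset\A_r$ and $E(\gamma)$, the outermost cluster meeting $B(x,\eps)$ has filled hull $\mathcal K$, hence consists of $\LL^{out}$-loops only; since deleting loops can only split clusters and shrink hulls, and since $\gamma$ touches $\partial\D$ so that no cluster surrounds it, this cluster is exactly the one read off $\LL^{out}$. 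Re-inserting $\LL^{in}$ leaves $\gamma^\eps$ unchanged precisely when no loop of $\LL^{in}$ merges into that cluster, which --- up to an error tending to $0$ as $\eps\to0$ --- happens if and only if no loop meets both $r\ov\D$ and $\mathcal K$, an event of probability $\exp(-c\,\Lambda(r\D,\mathcal K;\D))$; and $\Lambda(r\D,\mathcal K;\D)=\Lambda(r\D,\gamma;\D)$ because a loop joining $r\ov\D$ to $\mathcal K$ must cross $\gamma=\partial\mathcal K$. This gives $\PP[\widetilde\gamma_r^\eps\in d\gamma]=\PP[\gamma^\eps\in d\gamma]\,1_{\{\gamma\subset\A_r\}\cap E(\gamma)}\,e^{c\Lambda(r\D,\gamma;\D)}(1+o_\eps(1))$. (b) \emph{Effect of conditioning on $E$.} Conditionally on $\{\widetilde\gamma_r^\eps=\gamma\}$, the loops of $\LL^{out}$ lying strictly outside $\mathcal K$ form, by the loop-soup restriction property, a loop soup of intensity $c$ in the annular domain $\A_r\setminus\gamma=\A_r\setminus\mathcal K$; since the explored bubble's own cluster is attached to $C_1$ at $x$ and therefore cannot separate $\A_r$, the event $E$ holds exactly when this outside soup has no disconnecting cluster, so $\PP[E\mid\widetilde\gamma_r^\eps=\gamma]\to p(\A_r\setminus\gamma)$. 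Combining (a) and (b) (and using $p(\A_r)=\PP[E]$) yields the displayed identity.

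Granting the identity, I would divide both sides by $u(\eps)$ and let $\eps\to0$: by Proposition~\ref{res::bubble_simple_cle}, $\PP[\gamma^\eps\in\cdot]/u(\eps)\to\nu^{bub}_{\D;x}$ against functionals supported on macroscopic bubbles, while the weight $1_{\{\gamma\subset\A_r\}\cap E(\gamma)}\,(p(\A_r\setminus\gamma)/p(\A_r))\,e^{c\Lambda(r\D,\gamma;\D)}$ is $\nu^{bub}_{\D;x}$-integrable on $\{R(\gamma)\ge\delta\}$ for each $\delta>0$ (using $p(\A_r\setminus\gamma)\le1$, $p(\A_r)>0$, and standard estimates --- of the type in Proposition~\ref{res::Brownianloopmeasure_estimate} --- controlling $\Lambda(r\D,\gamma;\D)$ and the $\nu^{bub}_{\D;x}$-mass of bubbles that come close to $r\ov\D$). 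It then follows that $\PP[\widetilde\gamma_r^\eps\in\cdot\mid E]/u(\eps)$ converges to a $\sigma$-finite measure, which we call $\nu^{bub}_{\A_r;x}$, and that
\[
\frac{\nu^{bub}_{\A_r;x}}{\nu^{bub}_{\D;x}}[d\gamma]= 1_{\{\gamma\subset\A_r\}\cap E(\gamma)}\,\frac{p(\A_r\setminus\gamma)}{p(\A_r)}\,\exp\!\big(c\,\Lambda(r\D,\gamma;\D)\big),
\]
as claimed.

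The main obstacle is to make the two ``$1+o_\eps(1)$'' statements precise, i.e.\ to prove the annular analogues of the estimates behind Proposition~\ref{res::bubble_simple_cle} in \cite{SheffieldWernerCLE}. Concretely: in (a) one must show that, conditionally on the explored bubble being close to a fixed macroscopic $\gamma$, a loop of $\LL^{in}$ merges into the explored cluster if and only if it meets the hull $\mathcal K$ (up to probability $o_\eps(1)$) --- the ``if'' direction being the only subtle one, as it expresses that the outer boundary of an outermost cluster is ``thick'' along $\gamma$ --- and that no second macroscopic $\CLE$-loop meets $B(x,\eps)$; in (b) one must justify that, conditionally on the explored bubble, the loops outside its hull genuinely form a loop soup in $\A_r\setminus\mathcal K$, using the loop-soup restriction property together with the fact that ``the cluster has outer boundary exactly $\gamma$'' forces every remaining loop into one of the two open sets $\inte(\gamma)$ and $\A_r\setminus\mathcal K$. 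Both would be handled by the small-ball/loop-soup comparison arguments of \cite{SheffieldWernerCLE}, transported to the annulus; the key simplification is that $B(x,\eps)$ lies near $C_1$, far from $C_r$, so the local picture near $x$ is insensitive to the inner boundary. (The conditional description in (b) is, moreover, the loop-soup counterpart of the annular domain Markov property, Proposition~\ref{prop::annuluscle_domainmarkov}.)
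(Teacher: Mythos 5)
Your proposal is correct and follows essentially the same route as the paper: both realize the disc and annulus ensembles from a single Brownian loop soup split into the loops meeting $r\overline{\D}$ and those contained in $\A_r$, identify $\exp(-c\Lambda(r\D,\gamma;\D))$ as the probability that the explored cluster is unaffected by the inner loops, obtain the factor $p(\A_r\setminus\gamma)/p(\A_r)$ from the restriction property applied to the unexplored region, and pass to the limit via Proposition~\ref{res::bubble_simple_cle}. The only difference is presentational (the paper weights the test function by $e^{-c\Lambda}$ rather than writing the density with $e^{+c\Lambda}$ directly), and your explicit flagging of the ``loop merges into the cluster iff it meets the hull'' step matches the point the paper asserts without elaboration.
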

\begin{proof}
Suppose $\LL$ is a Brownian loop soup in $\D$, and let $\Gamma$ be the collection of the outer boundaries of outermost clusters of $\LL$. Consider the loops in $\Gamma$ that intersect $B(x,\eps)$, let $\gamma^{\eps}$ be the loop with largest radius. Suppose $\LL_1$ is the collection of loops in $\LL$ that are totally contained in $\A_r$. On the event $E(\LL_1)$, let $\Gamma_1$ be the collection of the outer boundaries of outermost clusters of $\LL_1$. Consider the loops in $\Gamma_1$ that intersect $B(x,\eps)$, let $\gamma_1^{\eps}$ be the loop with largest radius. Denote $\LL_2=\LL\setminus\LL_1$. Note that $\LL_1$ and $\LL_2$ are independent.
Then, for any integrable test function $F$, we have that
\begin{eqnarray*}
\lefteqn{\mu^{\sharp}_{\A_r}[F(\gamma_r^{\eps})\exp(-c\Lambda(r\D,\gamma_r^{\eps};\D))]}\\
&=&\E[F(\gamma_1^{\eps})\exp(-c\Lambda(r\D,\gamma_1^{\eps};\D))1_{E(\LL_1)}]/p(r)\\
&=&\E[F(\gamma_1^{\eps})\exp(-c\Lambda(r\D,\gamma_1^{\eps};\D))1_{E^1_1}1_{E^2_1}]/p(r)
\end{eqnarray*}
where the events $E^1_1,E_1^2$ are defined in the following way: Consider the loops in $\Gamma_1$ that intersect $B(x,\eps)$, the event $E_1^1$ is that no loop disconnects $C_r$ from $C_1$; consider the loops in $\Gamma_1$ that are totally contained in $\A_r\setminus B(x,\eps)$, the event $E_1^2$ is that no loop disconnects $C_r$ from $C_1$. Note that, given the loops in $\Gamma_1$ that intersect $B(x,\eps)$ and the event $E_1^1$, the event $E_1^2$ has probability $p(D_{1,\eps}^*)$ where $D_{1,\eps}^*$ is the set obtained by removing from $\A_r$ all loops (with their interiors) in $\Gamma_1$ that intersect $B(x,\eps)$. We also know that the quantity $\exp(-c\Lambda(r\D,\gamma_1^{\eps};\D))$ is the probability of the event that no loop in $\LL_2$ that intersects $\gamma_1^{\eps}$, which is equivalent to the event that $\{\gamma^{\eps}=\gamma_1^{\eps}\}$. Thus we have
\begin{eqnarray*}
\lefteqn{\E[F(\gamma_1^{\eps})\exp(-c\Lambda(r\D,\gamma_1^{\eps};\D))1_{E^1_1}1_{E^2_1}]/p(r)}\\
&=&\E[F(\gamma_1^{\eps})\exp(-c\Lambda(r\D,\gamma_1^{\eps};\D))1_{E^1_1}p(D_{1,\eps}^*)]/p(r)\\
&=&\E[F(\gamma_1^{\eps})1_{\{\gamma^{\eps}=\gamma^{\eps}_1\}}1_{E^1_1}p(D_{1,\eps}^*)]/p(r).
\end{eqnarray*}
Note that, when $\eps$ is very small, $D_{1,\eps}^*$ is very close to the set $\A_r\setminus \gamma_1^{\eps}$. We have
\begin{eqnarray*}
\lefteqn{\lim_{\eps\to 0}\frac{1}{u(\eps)p(r)}\E[F(\gamma_1^{\eps})1_{\{\gamma^{\eps}=\gamma^{\eps}_1\}}1_{E^1_1}p(D_{1,\eps}^*)]}\\
&=&\lim_{\eps\to 0} \frac{1}{u(\eps)p(r)} \E[F(\gamma^{\eps})1_{\{\gamma^{\eps}=\gamma^{\eps}_1\}}1_{E^1}p(\A_r\setminus \gamma^{\eps})]\\
&=&\lim_{\eps\to 0} \frac{1}{u(\eps)p(r)} \E[F(\gamma^{\eps})1_{\{\gamma^{\eps}\subset \A_r\}}1_{E^1}p(\A_r\setminus \gamma^{\eps})]\\
&=&\lim_{\eps\to 0} \frac{1}{u(\eps)p(r)} \E[F(\gamma^{\eps})1_{\{\gamma^{\eps}\subset \A_r\}}1_{E(\gamma^{\eps})}p(\A_r\setminus \gamma^{\eps})],
\end{eqnarray*}
where the events $E^1$ and $E(\gamma^{\eps})$ are defined in the following way: consider the loops in $\Gamma$ that intersect $B(x,\eps)$, the event $E^1$ is that no loop disconnects $C_r$ from $C_1$; the event $E(\gamma^{\eps})$ is that $\gamma^{\eps}$ does not disconnect $C_r$ from $C_1$.

Combining all these relations,we have
\begin{eqnarray*}
\lefteqn{\lim_{\eps\to 0}\frac{1}{u(\eps)}\mu^{\sharp}_{\A_r}[F(\gamma_r^{\eps})\exp(-c\Lambda(r\D,\gamma_r^{\eps};\D))]}\\
&=& \lim_{\eps\to 0} \frac{1}{u(\eps)p(r)} \E[F(\gamma^{\eps})1_{\{\gamma^{\eps}\subset \A_r\}}1_{E(\gamma^{\eps})}p(\A_r\setminus \gamma^{\eps})]\\
&=&\nu^{bub}_{\D;x}\left[F(\gamma)1_{\{\gamma\subset \A_r\}}1_{E(\gamma)}\frac{p(\A_r\setminus\gamma)}{p(r)}\right].
\end{eqnarray*}
\end{proof}

\section{CLE in the punctured disc}
\label{sec::cle_punctured_disc}
\subsection{Construction of CLE in the punctured disc}
We are going to define CLE in the punctured disc. Roughly speaking, it is the limit of CLE in the annulus $\A_r$ as the inradius $r$ goes to zero. There is another natural way to define CLE in the punctured disc: the limit of CLE in the disc conditioned on the event that the loop containing the origin has diameter at most $\eps$ as $\eps$ goes to zero.  From Proposition \ref{prop::simple_annulus_cle_origin}, we could check that the two limiting procedures would give the same limit. Thus, we also refer to CLE in the punctured disc as CLE in the unit disc conditioned that the origin is in the gasket.

\begin{lemma}\label{lem::annuluscles_coupling}
There exists a universal constant $C<\infty$ such that the following is true. For any $\delta\in (0,1), 0<r'<r<\delta^2$, and any subset $D\subset\A_{\delta}$, suppose $\Gamma_r$ (resp. $\Gamma_{r'}$) is a $\CLE$ in the annulus $\A_r$ (resp. $\A_{r'}$), and $D^*_r$ (resp. $D^*_{r'}$) is the set obtained by removing from $D$ all loops (and their interiors) of $\Gamma_r$ (resp. $\Gamma_{r'}$) that are not totally contained in $D$. Then there exists a coupling between $\Gamma_r$ and $\Gamma_{r'}$ such that the probability of the event $\{D^*_r=D^*_{r'}\}$ is at least
\[1-C\frac{\log(1/\delta)}{\log(1/r)}.\]
Furthermore, on the event $\{D^*_r=D^*_{r'}\}$, the collection of loops of $\Gamma_r$ restricted to $D_r^*$ is the same as the collection of loops of $\Gamma_{r'}$ restricted to $D_{r'}^*$.
\end{lemma}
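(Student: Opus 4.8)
\emph{Proof proposal.} The plan is to realize $\Gamma_r$ and $\Gamma_{r'}$ from a single Brownian loop soup and then couple them so that, off a rare event, the two pictures agree on everything $D$ can see. Let $\LL$ be a Brownian loop soup in $\A_{r'}$ and let $\LL'$ be the sub-collection of its loops contained in $\A_r$; by the restriction property, $\LL'$ is a Brownian loop soup in $\A_r$ and is independent of $\LL\setminus\LL'$ (which consists of the loops that reach $\{|z|\le r\}$). By construction, $\Gamma_r$ is obtained from $\LL'$ by conditioning on the event $E(\LL')$ that no cluster of $\LL'$ disconnects $C_r$ from $C_1$ and taking outer boundaries of outermost clusters, while $\Gamma_{r'}$ is obtained the same way from $\LL$ conditioned on $E(\LL)$. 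Since a cluster of $\LL'$ that disconnects $C_r$ from $C_1$ lies in $\A_r\subset\A_{r'}$, hence disconnects $C_{r'}$ from $C_1$, and is contained in a cluster of $\LL$, one has $E(\LL)\subseteq E(\LL')$; the Radon--Nikodym derivative of the law of $\LL'$ given $E(\LL)$ with respect to its law given $E(\LL')$ is, on $E(\LL')$, equal to $\tfrac{p(r)}{p(r')}\,\PP[\,E(\LL)\mid\LL'\,]$.

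Let $B$ be the event that some cluster of $\LL$ that meets $\overline{\A_\delta}$ also reaches $\{|z|\le 2r\}$, i.e.\ some cluster crosses the buffer annulus $\A(2r,\delta)$, whose modulus tends to $0$ as $r/\delta\to 0$. On $B^c$: every cluster of $\LL$ meeting $\overline{\A_\delta}\supset D$ stays in $\{2r<|z|<1\}\subset\A_r$, so the loops of either CLE that meet $D$, the set $D^*$, and the loops of either CLE inside $D^*$ are all determined, in the same way for both annuli, by $\LL'$ restricted to $\{|z|>2r\}$. Moreover, on $E(\LL')$ every disconnecting cluster of $\LL$ must use a loop of $\LL\setminus\LL'$ (else it would already be a disconnecting cluster of $\LL'$), hence reaches $\{|z|\le r\}$; so on $B^c$ that disconnecting cluster stays below radius $\delta$, and $\PP[E(\LL)\mid\LL']$ depends on $\LL'$ only through its restriction to $\{|z|<\delta\}$---a part which is a priori (before conditioning on $E(\LL')$) independent of the loops carrying the $D$-information. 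Using a maximal coupling of the two conditional laws of the $D$-relevant part of $\LL'$ and completing each side to a full sample of the corresponding CLE, one obtains a coupling with
\[
\PP[D^*_r=D^*_{r'}]\ \ge\ 1-\PP[B\mid E(\LL)]-\PP[B\mid E(\LL')]-\varepsilon,
\]
where $\varepsilon$ is the total-variation cost of the residual bias exerted by the $E(\LL')$-conditioning on the $D$-relevant part of $\LL'$; on $\{D^*_r=D^*_{r'}\}$ the two loop collections restricted to $D^*$ agree. That, given $D^*$, these restricted loops form genuine $\CLE$s in the components of $D^*$---which is what allows us to match the interiors---is the domain Markov property (Proposition \ref{prop::annuluscle_domainmarkov}), whose loop-soup proof applies to a general open $D\subset\A_r$.

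The remaining and essential point is the quantitative bound $\PP[B\mid E(\cdot)]+\varepsilon\le C\log(1/\delta)/\log(1/r)$; this is the main obstacle. A cluster crossing the thin annulus $\A(2r,\delta)$ is made of a chain of Brownian loops strung across the dyadic scales between $2r$ and $\delta$, and the Brownian loop-measure estimate of Result \ref{res::Brownianloopmeasure_estimate}, together with the scale invariance of $\mu^{\mathrm{loop}}$ (uniformity in the logarithmic radial variable), reduces the cost of such a crossing to a gambler's-ruin-type quantity in that variable: roughly, the chance that the innermost radius attained by a $D$-relevant cluster drops below $\delta$ before the cluster escapes outward to scale $1$, which is of the form $\log(1/\delta)/\log(1/r)$, the $\asymp\log(1/\delta)$ scales in $[\delta^2,\delta]$ at which the crossing can be anchored supplying the numerator. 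Passing from the unconditioned soup to the soup conditioned on $E(\cdot)$, and controlling $\varepsilon$, are handled with Result \ref{res::p_asymptotic}: $E(\cdot)$ has probability $p(r)\asymp r^\alpha$ bounded polynomially, and the (sub)multiplicativity $p(ab)\asymp p(a)p(b)$ lets one factor the conditioning across the scale separating $D$ from the inner hole at the cost of only a bounded factor. I expect the genuinely delicate parts to be (a) showing rigorously that, conditionally on $B^c$, the $E(\LL')$-conditioning distorts the $D$-relevant part of $\LL'$ by at most $\varepsilon$ of the stated size, and (b) the scale-by-scale bookkeeping behind the crossing estimate; granting these, one gets a universal $C<\infty$ with $\PP[D^*_r=D^*_{r'}]\ge 1-C\log(1/\delta)/\log(1/r)$, and the loop collections agree on $D^*$ on that event, which is the assertion of the lemma.
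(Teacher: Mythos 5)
Your setup---realizing both ensembles from a single Brownian loop soup $\LL$ in $\A_{r'}$ and its restriction $\LL'$ to $\A_r$, using the independence of $\LL'$ and $\LL\setminus\LL'$---is exactly the paper's starting point, and your observation $E(\LL)\subseteq E(\LL')$ is correct. But the two steps you defer as ``the genuinely delicate parts'' are precisely the quantitative content of the lemma, and the route you propose for them is harder than necessary. First, your bad event $B$ is a \emph{cluster}-crossing event (some cluster of $\LL$ meeting $\overline{\A_\delta}$ reaches $\{|z|\le 2r\}$); estimating the probability that a loop-soup cluster crosses a thin annulus requires a genuine multi-scale chaining argument that is nowhere supplied, and your gambler's-ruin sketch is not a proof. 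The paper's key observation is that $D^*_r\neq D^*_{r'}$ already forces a \emph{single Brownian loop} of $\LL\setminus\LL'$ to intersect $\A_\delta$, i.e.\ to cross from $C_r$ to $C_\delta$. That event, call it $S$, is Poissonian with probability $1-\exp(-c\Lambda(C_r,C_\delta;\A_{r'}))\lesssim\Lambda(C_r,C_\delta;\A_{r'})$, and Proposition \ref{res::Brownianloopmeasure_estimate} gives $\Lambda(C_r,C_\delta;\A_{r'})\lesssim\log(1/\delta)/\log(1/r)$ directly; no cluster analysis is needed.

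Second, your error term $\varepsilon$ (the total-variation distortion of the $D$-relevant part of $\LL'$ caused by the conditioning) is never bounded, and the factorization you invoke for it---that $\PP[E(\LL)\mid\LL']$ sees $\LL'$ only inside $\{|z|<\delta\}$---is only argued conditionally on $B^c$, which is itself a complicated functional of $\LL'$; unconditionally, a disconnecting cluster of $\LL$ can be assembled from loops of $\LL'$ at any scale glued by a single loop of $\LL\setminus\LL'$, so the claimed locality is not clean. The paper sidesteps all of this with an independence decomposition: $E(\LL)\subseteq E_1\cap E_2$, where $E_1=E(\LL')$ depends only on the loops contained in $\A_r$ and has probability $p(r)$, while $E_2$ (no disconnecting cluster among the loops contained in $\A(r',r)$) has probability $p(r'/r)$ and depends on loops disjoint from those determining $E_1$ and $S$. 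Hence $\PP[S,\,E(\LL)]/p(r')\le\PP[S]\,p(r)\,p(r'/r)/p(r')\lesssim\PP[S]$ by Proposition \ref{res::p_asymptotic}, which replaces both your maximal coupling and your $\varepsilon$ by a one-line computation. As written, your argument does not yield the stated bound; to repair it, replace $B$ by the single-loop crossing event and the total-variation step by this independence decomposition.
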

\begin{proof}
Suppose $\LL$ is a Brownian loop soup in $\A_{r'}$. Denote by $\LL_1$ the collection of loops of $\LL$ that are totally contained in $\A_r$, and write $\LL_2=\LL\setminus\LL_1$. Note that $\LL_1$ and $\LL_2$ are independent.
On the event $E(\LL)$, define $\Gamma$ (resp. $\Gamma_1$) to be the collection of outer boundaries of outermost clusters of $\LL$ (resp. $\LL_1$). Note that, conditioned on $E(\LL)$, the collection $\Gamma$ (resp. $\Gamma_1$) has the same law as $\CLE$ in the annulus $\A_{r'}$ (resp. $\A_r$). Let $D^*$ (resp. $D_1^*$) be the set obtained by removing from $D$ all loops (and their interiors) of $\Gamma$ (resp. $\Gamma_1$) that are not totally contained in $D$. Clearly
\[D^*\subset D_1^*\subset \A_{\delta}.\]
Here is a simple observation: on the event $E(\LL)$, if there is no loop of $\LL_2$ that intersects $D_1^*$, then we have $D^*=D^*_1$. Define $S(\LL_2,\A_{\delta})$ as the event that there exists loop of $\LL_2$ intersecting $\A_{\delta}$. Thus we have
\begin{eqnarray*}
\lefteqn{\PP[D^*\neq D^*_1, E(\LL)]/p(r')}\\
&\le& \PP[S(\LL_2,\A_{\delta}), E(\LL)]/p(r')\\
&\le& \PP[S(\LL_2,\A_{\delta}), E_1,E_2]/p(r'),
\end{eqnarray*}
where the events $E_1$ and $E_2$ are defined in the following way: the event $E_1$ is that no loop of $\Gamma_1$ that disconnects $C_r$ from $C_1$; consider the loops of $\LL$ that are totally contained in the annulus $\A(r',r)$, the event $E_2$ is that there is no cluster that disconnects $C_{r'}$ from $C_r$. Clearly, the events $E_1$, $E_2$, $S(\LL_2,\A_{\delta})$ are independent, and the probability of $E_1$ (resp. $E_2$) is $p(r)$ (resp. $p(r'/r)$). Thus we have
\begin{eqnarray*}
\lefteqn{\PP[S(\LL_2,\A_{\delta}), E_1,E_2]/p(r')}\\
&\le& \PP[S(\LL_2,\A_{\delta})]p(r)p(r'/r)/p(r')\\
&\lesssim&  \PP[S(\LL_2,\A_{\delta})],
\end{eqnarray*}
where the constant in $\lesssim$ can be decided from Proposition \ref{res::p_asymptotic} and is universal. To complete the proof, we only need to show that
\[\PP[S(\LL_2,\A_{\delta})]\lesssim \frac{\log(1/\delta)}{\log(1/r)}.\]
Note that the event $S(\LL_2,\A_{\delta})$ is the same as the event that there exists a loop in $\LL$ intersecting both $C_r$ and $C_{\delta}$. The latter event has the probability \[1-\exp(-c\Lambda(C_r,C_{\delta};\A_{r'})).\]
From Proposition \ref{res::Brownianloopmeasure_estimate}, we have that
\begin{eqnarray*}
\lefteqn{\PP[S(\LL_2,\A_{\delta})]}\\
&=&1-\exp(-c\Lambda(C_r,C_{\delta};\A_{r'}))\\
&\lesssim&\Lambda(C_r,C_{\delta};\A_{r'})\\
&=&\Lambda(C_r,C_{\delta};\C\setminus r'\D)-\Lambda(C_r,C_1; \C\setminus r'\D)\\
&=&2\int_{r'}^r \frac{1}{s}\left(\rho(\frac{\delta}{s})-\rho(\frac{1}{s})\right)ds\\
&\lesssim& \int_{r'}^r \frac{1}{s}\frac{\log(1/\delta)}{(\log\frac{1}{s})^2} ds\\
&\lesssim& \frac{\log(1/\delta)}{\log(1/r)}.
\end{eqnarray*}\end{proof}

\begin{theorem}\label{thm::conditionedcle_construction}
There exists a unique measure on collections of disjoint simple loops in the punctured disc, which we call \textbf{$\CLE$ in the punctured disc} or \textbf{$\CLE$ in $\D$ conditioned on the event that the origin is in the gasket}, to which $\CLE$ in the annulus $\A_r$ converge in the following sense. There exists a universal constant $C<\infty$ such that for any $\delta>0$, any subset $D\subset A_{\delta}$, suppose $\Gamma^{\dag}$ is a $\CLE$ in the punctured disc and $\Gamma_r$ is a $\CLE$ in the annulus $\A_r$, and $D^{\dag,*}$ (resp. $D^*_r$) is the set obtained by removing from $D$ all loops of $\Gamma^{\dag}$ (resp. $\Gamma_r$) that are not totally contained in $D$, then $\Gamma^{\dag}$ and $\Gamma_r$ can be coupled so that the probability of the event $\{D^{\dag,*}=D^*_r\}$ is at least
\[1-C\frac{\log(1/\delta)}{\log(1/r)}.\]
Furthermore, on the event $\{D^{\dag,*}=D^*_r\}$, the collection of loops of $\Gamma^{\dag}$ restricted to $D^{\dag,*}$ is the same as the collection of loops of $\Gamma_r$ restricted to $D_r^*$.
\end{theorem}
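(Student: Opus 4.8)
The plan is to extract from Lemma~\ref{lem::annuluscles_coupling} a Cauchy property for the laws of $\CLE$ in $\A_r$ restricted to a fixed smaller annulus, to assemble the scale-by-scale limits into a single measure on loop configurations in $\DO$ by a consistency argument, and then to read off the quantitative coupling bound and the uniqueness essentially formally; all of the probabilistic content is already contained in Lemma~\ref{lem::annuluscles_coupling}. Concretely, fix $\delta\in(0,1)$ and a subset $D\subset\A_\delta$, and for $r<\delta^2$ write $X_r=(D^*_r,\Gamma_r|_{D^*_r})$ for the pair consisting of the hole and the loops of $\Gamma_r$ that are contained in $D$. For $0<r''<r'<\delta^2$, Lemma~\ref{lem::annuluscles_coupling} couples $\Gamma_{r'}$ and $\Gamma_{r''}$ so that $X_{r'}=X_{r''}$ with probability at least $1-C\log(1/\delta)/\log(1/r')$, so the total variation distance between the laws of $X_{r'}$ and $X_{r''}$ is at most $C\log(1/\delta)/\log(1/r')$, which tends to $0$. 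Hence the law of $X_r$ converges in total variation, as $r\to0$, to a limiting probability measure which I will call $\mu_{D,\delta}$.

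Next I would assemble these limits into one object. Take $D$ to range over a countable family of dyadic approximating domains exhausting $\A_{2^{-k}}$, $k\ge1$. The family of laws $\mu_{D,\delta}$ is consistent: if $D_1\subset D_2$, then the data $X$ relative to $D_1$ is a deterministic measurable function of the data relative to $D_2$ --- a loop contained in $D_1$ is contained in $D_2$, hence recorded among the loops of $\Gamma|_{D_2}$, while $D_1\setminus D_1^*$ is recovered as $(D_1\setminus D_2^*)$ together with the (recorded) loops that lie in $D_2$ but not in $D_1$, with their interiors --- and this functional intertwines the laws of the data under $\Gamma_r$ for every $r$, hence also their $r\to0$ limits. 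By the Kolmogorov extension theorem, the resulting countable projective system admits a limit measure $\mu^{\dag}$, which I would identify with a measure on locally finite, non-nested, disjoint simple loop configurations in $\DO$: disjointness, simplicity and the non-nested property pass to the limit because on the high-probability coupling events the limiting configuration agrees on $\A_\delta$ with that of $\Gamma_r$, and the required local finiteness --- in particular that no macroscopic loop accumulates at the origin in the limit --- follows from the tightness of the number of loops of diameter at least $\eps$ as $r\to0$ and $\delta\to0$, which one obtains by comparison with $\CLE$ in $\D$ together with Proposition~\ref{res::p_asymptotic} and the Brownian loop measure estimates. This $\mu^{\dag}$ is the announced $\CLE$ in the punctured disc, with sample $\Gamma^{\dag}$.

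It then remains to obtain the quantitative statement and uniqueness. For $r<\delta^2$, letting $r''\to0$ in the Cauchy estimate shows that the law of $\Gamma^{\dag}|_{\A_\delta}$ is within total variation $C\log(1/\delta)/\log(1/r)$ of that of $\Gamma_r|_{\A_\delta}$; for an arbitrary $D\subset\A_\delta$, both $(D^{\dag,*},\Gamma^{\dag}|_{D^{\dag,*}})$ and $(D^*_r,\Gamma_r|_{D^*_r})$ are the same measurable functional of $\Gamma^{\dag}|_{\A_\delta}$ and of $\Gamma_r|_{\A_\delta}$ respectively, so their laws are within the same total variation distance. A total variation bound $\eta$ between two laws is attained by a coupling in which the two variables coincide with probability at least $1-\eta$; sampling $\Gamma^{\dag}$ and $\Gamma_r$ conditionally independently given this common restriction data lifts it to a coupling of $\Gamma^{\dag}$ and $\Gamma_r$ with $\PP[D^{\dag,*}=D^*_r]\ge1-C\log(1/\delta)/\log(1/r)$, on which, because the restriction data include the loops themselves, $\Gamma^{\dag}|_{D^{\dag,*}}$ and $\Gamma_r|_{D^*_r}$ coincide; for $r\ge\delta^2$ the claimed bound is non-positive and there is nothing to prove. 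For uniqueness, any measure satisfying the stated convergence property has, for each $\delta$ and each $D\subset\A_\delta$, the same restriction law as $\mu^{\dag}$ --- namely the $r\to0$ limit of the law of $(D^*_r,\Gamma_r|_{D^*_r})$ --- and since a locally finite loop configuration in $\DO$ is determined by its restrictions to the $\A_{2^{-k}}$, these marginals determine the measure.

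The part requiring care is not the estimate --- that is immediate from Lemma~\ref{lem::annuluscles_coupling} --- but the soft bookkeeping: verifying the consistency of the restriction maps so that the extension theorem applies, and checking that the resulting projective limit is genuinely carried by locally finite loop configurations in $\DO$, i.e.\ that no macroscopic loop degenerates onto the puncture in the $r\to0$ limit. I expect this last point to be the main obstacle.
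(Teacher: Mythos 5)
Your proposal is correct and rests on exactly the same quantitative input, Lemma~\ref{lem::annuluscles_coupling}, but you assemble the limit differently from the paper. The paper works with realizations rather than laws: it picks the subsequence $r_k$ defined by $\log\log(1/r_k)=k$, so that the pairwise coupling failure probabilities $Ce^{-k}\log(1/\delta)$ are summable, chains these couplings, and applies Borel--Cantelli to conclude that $D^*_{k}$ is almost surely eventually constant; the limit object is \emph{defined} as this eventual value, and the stated bound for general $r$ is read off from the tail sum $\sum_{k\ge k_0}Ce^{-k}\log(1/\delta)$ plus one further application of the lemma to compare $\Gamma_r$ with $\Gamma_{k_0}$. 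You instead convert the coupling bound into a total-variation Cauchy estimate for the marginal laws $X_r=(D^*_r,\Gamma_r|_{D^*_r})$, take the TV limit, and glue the limiting marginals into a single measure via consistency of the restriction maps and the Kolmogorov extension theorem, recovering the coupling afterwards from maximal couplings and conditional independence. Both routes are sound, and yours is more explicit about the measure-theoretic bookkeeping (consistency across domains, reduction of arbitrary $D\subset\A_\delta$ to $D=\A_\delta$, and uniqueness) that the paper leaves implicit; the price is an abstract extension step that the paper avoids by building everything on one probability space. The point you flag as the main obstacle --- that the projective limit is carried by \emph{locally finite} configurations, i.e.\ that loops of diameter at least $\eps$ do not accumulate at the puncture --- is a real gap in your write-up, but the paper's proof does not address it either; it can be closed by noting that a loop of $\Gamma_r$ of diameter at least $\eps$ meeting $B(0,\eta)$ forces a chain of Brownian loops crossing $\A(\eta,\eps/2)$, and the conditional probability of such a crossing under the conditioned loop soup tends to $0$ as $\eta\to 0$ uniformly in $r$ by the same $\Lambda$-estimate and quasi-multiplicativity of $p$ used in the proof of Lemma~\ref{lem::annuluscles_coupling}, which gives the uniform tightness you need.
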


\begin{proof}
Define $r_k$ to be the sequence of positive values so that:
\[\log\log\frac{1}{r_k}=k.\]
Note that $r_k\to 0$ as $k\to \infty$.
For $k\ge 1$, suppose $\Gamma_k$ is a $\CLE$ in the annulus $\A_{r_k}$ and $D_k^*$ is the set obtained by removing from $D$ all loops of $\Gamma_k$ that are not totally contained in $D$.
From Lemma \ref{lem::annuluscles_coupling}, $\Gamma_k$ and $\Gamma_{k+1}$ can be coupled so that the probability of $\{D^*_k\neq D^*_{k+1}\}$ is at most
\[Ce^{-k}\log(1/\delta),\]
and on the event $\{D^*_k=D^*_{k+1}\}$, the collection of loops of $\Gamma_k$ restricted to $D^*_k$ is the same as the collection of loops of $\Gamma_{k+1}$ restricted to $D^*_{k+1}$.
Suppose that, for each $k\ge 1$, $\Gamma_k$ and $\Gamma_{k+1}$ are coupled in this way. Then with probability 1, for all but finitely many couplings, we have that $D^*_k=D^*_{k+1}$. Suppose that this is true for all $k\ge l$, and define, for $k\ge l$,
\[D^{\dag,*}=D^*_k,\]
and define $\Gamma^{\dag}$ restricted to $D^{\dag,*}$ to be the collection of loops of $\Gamma_k$ restricted to $D_k^*$.
Then, for any $k_0\ge 1$, the probability of $\{D^{\dag,*}\neq D_{k_0}^*\}$ is at most
\[\sum_{k\ge k_0}Ce^{-k}\log(1/\delta)\lesssim e^{-k_0}\log(1/\delta).\]
For any $r>0$, suppose $r_{k_0}\le r\le r_{k_0-1}$. Then $\CLE$ in the annulus $\A_r$, denoted by $\Gamma_r$, can be coupled with $\Gamma_{k_0}$ so that the probability of $\{D_r^*\neq D_{k_0}^*\}$ is at most
\[C\frac{\log(1/\delta)}{\log(1/r)},\]
where $D_r^*$ is the set obtained by removing from $D$ all loops of $\Gamma_r$ that are not totally contained in $D$. And on the event $\{D_r^*=D_{k_0}^*\}$, the collection of loops of $\Gamma_r$ restricted to $D_r^*$ is the same as the collection of loops of $\Gamma_{k_0}$ restricted to $D_{k_0}^*$. Therefore, the probability of the event $\{D^{\dag,*}\neq D_r^*\}$ is at most
\[C\log(1/\delta)e^{-k_0}+C\frac{\log(1/\delta)}{\log(1/r)}\lesssim \frac{\log(1/\delta)}{\log(1/r)}.\]
This completes the proof.
\end{proof}

\subsection{Properties of CLE in the punctured disc}

Clearly, $\CLE$ in the punctured disc is invariant under rotation. Thus, it is possible to define $\CLE$ in any non-trivial simply connected domain $D$ with a singular point $z\in D$ via conformal image, and we call it \textbf{$\CLE$ in $D$ conditioned on the event that $z$ is in the gasket}.
Propositions \ref{prop::conditionedcle_domainmarkov_1} and \ref{prop::conditionedcle_domainmarkov_2} describe the domain Markov properties of $\CLE$ in the punctured disc.
\begin{proposition}\label{prop::conditionedcle_domainmarkov_1}
Suppose $\Gamma^{\dag}$ is a $\CLE$ in the punctured disc.
For any subset $D\subset \D$ such that 0 is an interior point of $\D\setminus D$ and that $D$ is either simply connected or an annulus, let $D^{\dag,*}$ be the set obtained by removing from $D$ all loops of $\Gamma^{\dag}$ that are not totally contained in $D$. Then, given $D^{\dag,*}$, for each connected component $U$ of $D^{\dag,*}$, the conditional law of the loops in $\Gamma^{\dag}$ that stay in $U$ is the same as $\CLE$ in $U$.
\end{proposition}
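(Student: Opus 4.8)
The plan is to transfer the domain Markov property of $\CLE$ in the annulus (Proposition~\ref{prop::annuluscle_domainmarkov}) to the punctured disc by means of the coupling furnished by Theorem~\ref{thm::conditionedcle_construction}. Since $0$ is an interior point of $\D\setminus D$ and $D\subset\D$, there is a $\delta\in(0,1)$ with $D\subset\A_{\delta}$, and for $r<\delta$ we moreover have $\A_{\delta}\subset\A_r$, so both Theorem~\ref{thm::conditionedcle_construction} and Proposition~\ref{prop::annuluscle_domainmarkov} apply to this $D$. Writing $\mu^{\dag}$ for the law of $\CLE$ in the punctured disc, and arguing as in the proof of Proposition~\ref{prop::annuluscle_domainmarkov}, it suffices to fix a connected component $U$ of $D^{\dag,*}$, its lattice approximation $U_n\subset U$ with boundary a simple path in $2^{-n}\Z^2$, a bounded function $F$ on loop configurations depending only on loops of diameter greater than $4\cdot 2^{-n}$, and a deterministic lattice domain $V_n$ with $\PP_{\mu^{\dag}}[U_n=V_n]>0$, and to prove
\[
\mu^{\dag}\bigl[F(\Gamma^{\dag}|_{V_n})\cond U_n=V_n\bigr]=\mu^{\sharp}_{V_n}[F].
\]
Both the simply connected and the annular cases for $D$ (hence for $U$ and $V_n$) are admissible here, since Proposition~\ref{prop::annuluscle_domainmarkov} covers both.

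To establish this identity, couple $\Gamma^{\dag}$ with a $\CLE$ $\Gamma_r$ in the annulus $\A_r$ as in Theorem~\ref{thm::conditionedcle_construction}, so that $\PP[D^{\dag,*}\neq D^*_r]\le C\log(1/\delta)/\log(1/r)$ and, on the event $\{D^{\dag,*}=D^*_r\}$, the loops of $\Gamma^{\dag}$ and of $\Gamma_r$ restricted to this common set coincide. Since $U_n$ is a measurable function of $D^{\dag,*}$ (respectively of $D^*_r$) and $V_n\subset D$, on this event the indicators $1_{U_n=V_n}$ agree for the two configurations and $F(\Gamma^{\dag}|_{V_n})=F(\Gamma_r|_{V_n})$. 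Hence
\[
\bigl|\mu^{\dag}[F(\Gamma^{\dag}|_{V_n})1_{U_n=V_n}]-\mu^{\sharp}_{\A_r}[F(\Gamma_r|_{V_n})1_{U_n=V_n}]\bigr|\le 2\|F\|_{\infty}\,C\,\frac{\log(1/\delta)}{\log(1/r)},
\]
and likewise $\PP_{\mu^{\sharp}_{\A_r}}[U_n=V_n]\to\PP_{\mu^{\dag}}[U_n=V_n]$ as $r\to0$. On the other hand, the domain Markov property of $\CLE$ in the annulus gives $\mu^{\sharp}_{\A_r}[F(\Gamma_r|_{V_n})1_{U_n=V_n}]=\mu^{\sharp}_{V_n}[F]\,\PP_{\mu^{\sharp}_{\A_r}}[U_n=V_n]$. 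Letting $r\to0$ we obtain $\mu^{\dag}[F(\Gamma^{\dag}|_{V_n})1_{U_n=V_n}]=\mu^{\sharp}_{V_n}[F]\,\PP_{\mu^{\dag}}[U_n=V_n]$, and dividing by $\PP_{\mu^{\dag}}[U_n=V_n]$ yields the displayed identity.

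It remains to let $n\to\infty$ and read off the statement. For $F$ depending only on loops of diameter greater than a fixed $\eps_0>0$, local finiteness of $\Gamma^{\dag}$ together with $U_n\uparrow U$ imply that, almost surely, every such loop of $\Gamma^{\dag}$ contained in $U$ lies in $U_n$ for all $n$ large, so $F(\Gamma^{\dag}|_{U_n})\to F(\Gamma^{\dag}|_{U})$; correspondingly $\mu^{\sharp}_{V_n}[F]$ converges to the value of the appropriate $\CLE$ law on the limiting domain, using that simple $\CLE$ (and $\CLE$ in the annulus) is continuous under Carath\'eodory convergence of domains -- which can itself be seen from the Brownian loop soup construction, since the loops of $\LL$ contained in $U_n$ increase to those contained in $U$ and the outermost-cluster outer boundaries converge accordingly. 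Feeding this back, and using that the law of a locally finite loop ensemble is determined by the joint laws of its macroscopic loops (and, if one wants the components of $D^{\dag,*}$ jointly, combining the above with the Markov property of the loop soup exactly as in the annulus case), one concludes that, conditionally on $D^{\dag,*}$, the loops of $\Gamma^{\dag}$ that stay in $U$ form a $\CLE$ in $U$.

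The routine parts are the coupling and the limit $r\to0$, which use Theorem~\ref{thm::conditionedcle_construction} and Proposition~\ref{prop::annuluscle_domainmarkov} essentially verbatim. The main obstacle is the passage $n\to\infty$: one must argue that $\mu^{\sharp}_{V_n}[F]$ converges to the $\CLE$ law on the limiting domain (i.e.\ continuity of simple and annular $\CLE$ under lattice-exhaustion of the domain), and one must check that the per-$V_n$ identities above genuinely assemble into a statement conditional on $D^{\dag,*}$ rather than on the finer data $(U_n)_{n\ge1}$; both points are best handled by returning to the loop-soup construction, where the relevant monotone convergences and independences are transparent.
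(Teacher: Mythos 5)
Your proof is correct and follows essentially the same route as the paper, whose own proof is a single sentence asserting that the statement is a direct consequence of the coupling in Theorem \ref{thm::conditionedcle_construction} and the domain Markov property of $\CLE$ in the annulus (Proposition \ref{prop::annuluscle_domainmarkov}). You have simply written out in detail the argument the paper leaves implicit (the lattice approximation, the $r\to 0$ limit via the coupling, and the $n\to\infty$ step), and your added care about the passage $n\to\infty$ is at the same level of rigor as the paper's own treatment of Proposition \ref{prop::annuluscle_domainmarkov}.
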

\begin{proof}
The conclusion is a direct consequence of the construction of $\CLE$ in the punctured disc in Theorem \ref{thm::conditionedcle_construction} and the domain Markov property of $\CLE$ in the annulus in Proposition \ref{prop::annuluscle_domainmarkov}.
\end{proof}
\begin{proposition}\label{prop::conditionedcle_domainmarkov_2}
Suppose $\Gamma^{\dag}$ is a $\CLE$ in the punctured disc.
For any simply connected domain $D\subset \D$ such that $0\in D$, let $D^{\dag,*}$ be the set obtained by removing from $D$ all loops of $\Gamma^{\dag}$ that are not totally contained in $D$. Suppose $U$ is the connected component of $D^{\dag,*}$ that contains the origin. Then, given $D^{\dag,*}$, the conditional law of loops in $\Gamma^{\dag}$ that stay in $U$ is the same as $\CLE$ in $U$ conditioned on the event that the origin is in the gasket.
\end{proposition}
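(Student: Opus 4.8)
The plan is to mimic the proof of Proposition~\ref{prop::conditionedcle_domainmarkov_1}, combining the construction of $\CLE$ in the punctured disc (Theorem~\ref{thm::conditionedcle_construction}) with the \emph{annulus} domain Markov property in the form of Proposition~\ref{prop::annuluscle_domainmarkov}(2). By the rotational invariance of $\CLE$ in the punctured disc we may take $D\subset\D$ simply connected with $0\in D$. First I would fix a small $r>0$ with $r\overline{\D}\subset D$ and let $\Gamma_r$ be a $\CLE$ in $\A_r$. Then $D\cap\A_r=D\setminus r\overline{\D}$ is an annulus, and since (by the defining no-disconnection conditioning) no loop of $\Gamma_r$ surrounds $C_r$, removing from $D\cap\A_r$ the loops of $\Gamma_r$ that are not totally contained in it produces exactly $D^{*}_r\setminus r\overline{\D}$, where $D^{*}_r$ is obtained by removing from $D$ the loops of $\Gamma_r$ not contained in $D$. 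Let $U_r$ be the connected component of this set whose boundary contains $C_r$; it is an annulus, and by Proposition~\ref{prop::annuluscle_domainmarkov}(2), conditionally on $D^{*}_r\setminus r\overline{\D}$, the loops of $\Gamma_r$ that stay in $U_r$ form a $\CLE$ in the annulus $U_r$.

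Next I would pass to the limit $r\to0$. On one hand $\Gamma_r$ converges to a $\CLE$ $\Gamma^{\dag}$ in the punctured disc, in a coupling under which, with probability tending to $1$, the loops not contained in $D$ agree; on this event $D^{*}_r=D^{\dag,*}$ and $U_r=U\setminus r\overline{\D}$, where $U$ is the connected component of $D^{\dag,*}$ containing the origin and the loops of $\Gamma_r$ staying in $U_r$ are literally the loops of $\Gamma^{\dag}$ staying in $U$ that avoid $r\overline{\D}$. Here $U$ is a simply connected domain containing $0$, since no loop of $\Gamma^{\dag}$ surrounds the origin and every removed loop meets $\partial D$. On the other hand, conditionally on $U$, the $\CLE$ in the annulus $U_r=U\setminus r\overline{\D}$ is, under a conformal map $\phi\colon U\to\D$ with $\phi(0)=0$, a $\CLE$ in the annulus $\D\setminus\phi(r\overline{\D})$, whose conformal modulus tends to $0$ and whose inner hole is asymptotic to the disc of radius $\phi'(0)r$ (cf.\ Lemma~\ref{lem::CR_cvg}); by Theorem~\ref{thm::conditionedcle_construction}, applied in $\D$ and transported back by $\phi^{-1}$ using conformal invariance of $\CLE$ in the annulus, this converges as $r\to0$ to $\CLE$ in $U$ conditioned on the event that the origin is in the gasket. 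Feeding the first convergence into the second and using the tower property for the conditioning on $D^{\dag,*}$, I would conclude that, given $D^{\dag,*}$, the loops of $\Gamma^{\dag}$ staying in $U$ have the law of $\CLE$ in $U$ conditioned on the event that the origin is in the gasket.

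The step I expect to be the main obstacle is making this passage to the limit rigorous, precisely because $D$ contains the origin: one cannot simply invoke the coupling of Theorem~\ref{thm::conditionedcle_construction} (its proof, through Lemma~\ref{lem::annuluscles_coupling}, crucially uses that $D\subset\A_{\delta}$ stays away from the origin), so one must argue directly that the outermost loops of $\Gamma_r$ bordering $\partial D$ are faithfully matched to those of $\Gamma^{\dag}$ and that the conditional laws inside $U$ converge jointly with $U_r\to U$. The clean way to do this, as in the proofs of Propositions~\ref{prop::annuluscle_domainmarkov} and~\ref{prop::simple_annulus_cle_domainmarkov}, is to return to the Brownian loop soup: approximate $U$ from inside by a set $U_n$ with dyadic-polygonal boundary, fix a value $V_n$ of $U_n$ of positive probability, use the restriction and independence properties of the loop soup together with Proposition~\ref{res::p_asymptotic} to identify, for each small $r$, the conditional law of the loops of $\Gamma_r$ inside $V_n$ as $\CLE$ in the annulus $V_n\setminus r\overline{\D}$, and only then let $r\to0$ (recovering $\Gamma^{\dag}$) and $n\to\infty$. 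An alternative is to use the other description of $\CLE$ in the punctured disc recalled at the start of this section — the limit as $\eps\to0$ of $\CLE$ in $\D$ conditioned on the loop through the origin having diameter at most $\eps$ — together with the ordinary domain Markov property of $\CLE$ in $\D$: for $\eps$ small this conditioning event coincides with the analogous event for the $\CLE$ in the connected component of $D^{*}$ containing the origin, and one again sends $\eps\to0$.
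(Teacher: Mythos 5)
Your proposal is essentially the paper's argument — cut out $r\overline{\D}$, apply an annular domain Markov property to $D\cap\A_r$, and let $r\to0$ using the fact that $\CM(U_r)\to 0$ together with Theorem~\ref{thm::conditionedcle_construction} (via a conformal map $U\to\D$, cf.\ Lemma~\ref{lem::CR_cvg}) — but with one reordering that creates the obstacle you flag, and which the paper's ordering dissolves. You apply the annulus domain Markov property (Proposition~\ref{prop::annuluscle_domainmarkov}(2)) to $\Gamma_r$ in $\A_r$ and only afterwards try to pass to $\Gamma^{\dag}$, which forces you to couple $\Gamma_r$ with $\Gamma^{\dag}$ on a set containing the origin, outside the scope of Theorem~\ref{thm::conditionedcle_construction}. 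The paper instead works with $\Gamma^{\dag}$ from the start: the set $D_r=D\cap\A_r$ is an annulus with $0$ in the interior of $\D\setminus D_r$, so Proposition~\ref{prop::conditionedcle_domainmarkov_1} — already established for the punctured-disc $\CLE$ — applies directly and gives that, conditionally, the loops of $\Gamma^{\dag}$ in the annular component $U_r$ form a $\CLE$ in the annulus $U_r$; no coupling between $\Gamma_r$ and $\Gamma^{\dag}$ near the origin is ever needed. The only event one must control is that no loop of $\Gamma^{\dag}$ meets both $\D\setminus D$ and $r\D$ (so that $U_r$ really is $U$ minus a small neighbourhood of the origin), which is the same observation you make, and then the limit $r\to0$ is exactly an application of the construction theorem inside $U$. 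So your two proposed repairs (returning to the loop soup, or the diameter-$\eps$ conditioning) are workable but unnecessary; the cleaner fix is simply to invoke Proposition~\ref{prop::conditionedcle_domainmarkov_1} on $D\cap\A_r$ before taking any limit.
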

\begin{figure}[ht!]
\begin{subfigure}[b]{\textwidth}
\begin{center}
\includegraphics[width=0.23\textwidth]{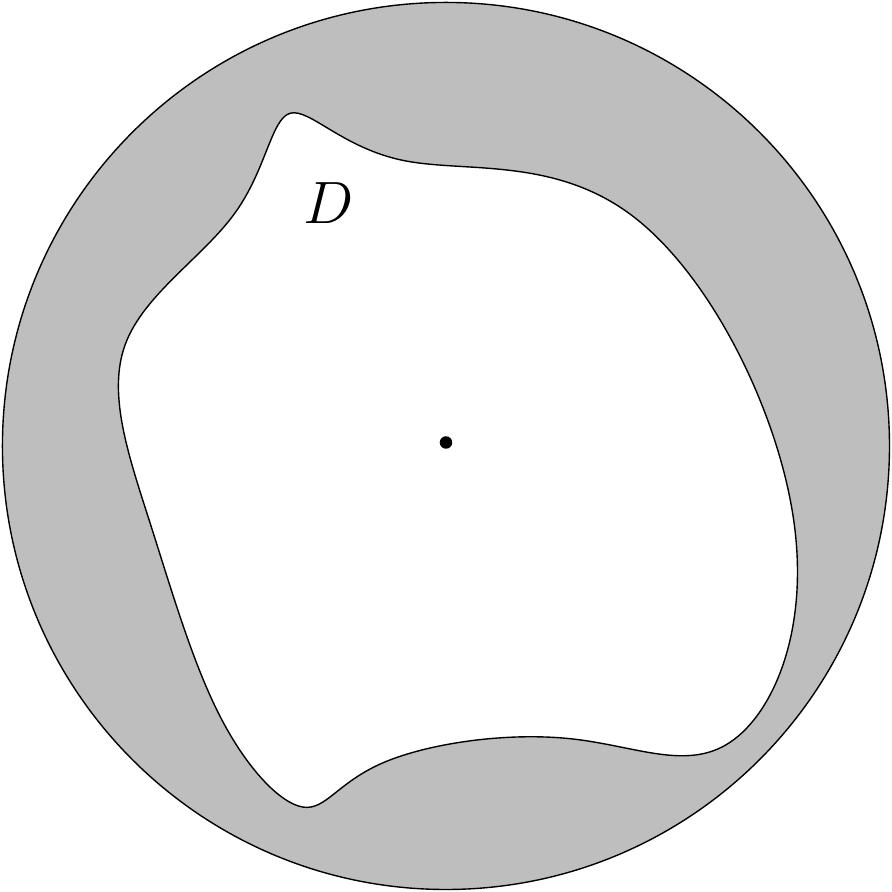}
\includegraphics[width=0.23\textwidth]{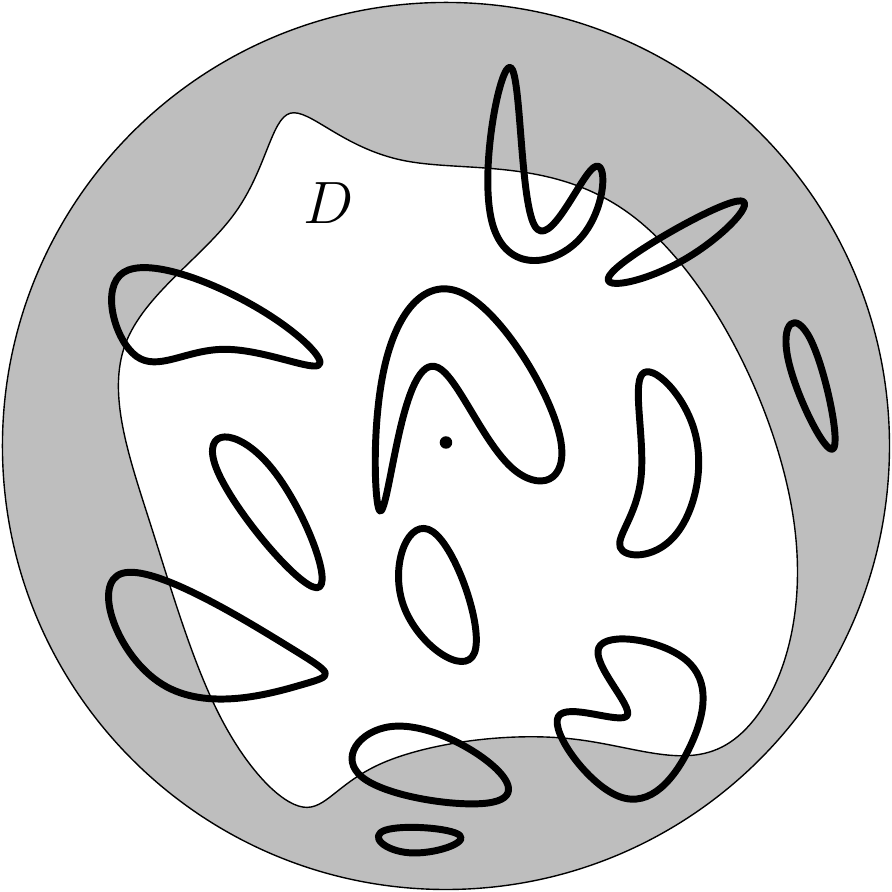}
\includegraphics[width=0.23\textwidth]{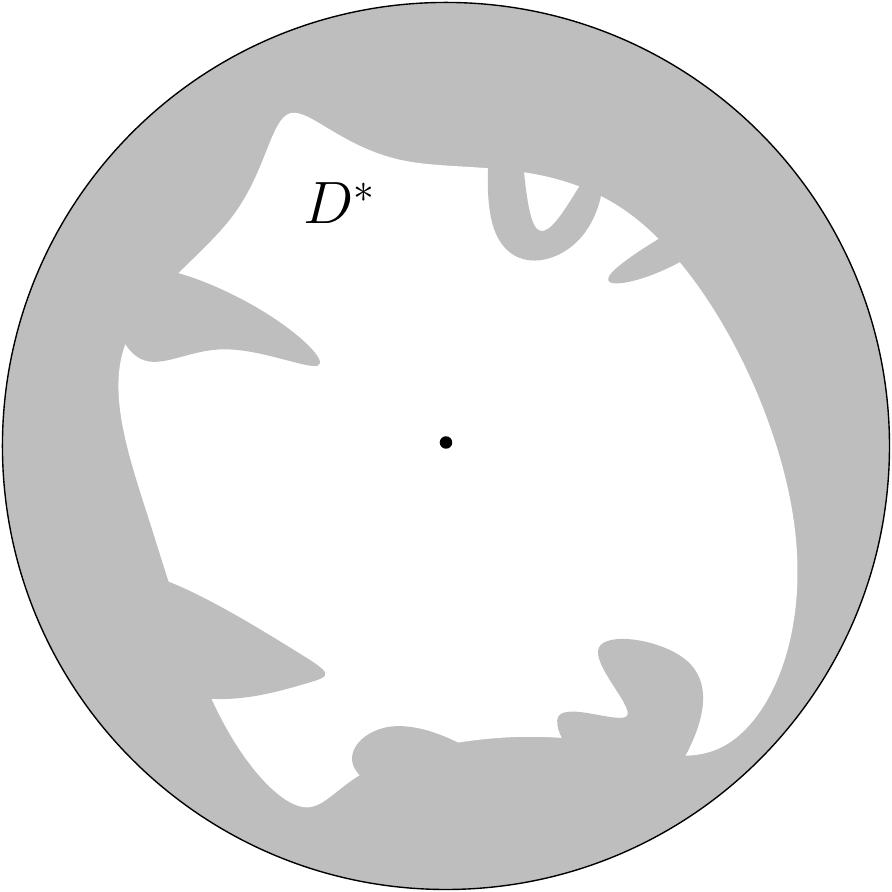}
\includegraphics[width=0.23\textwidth]{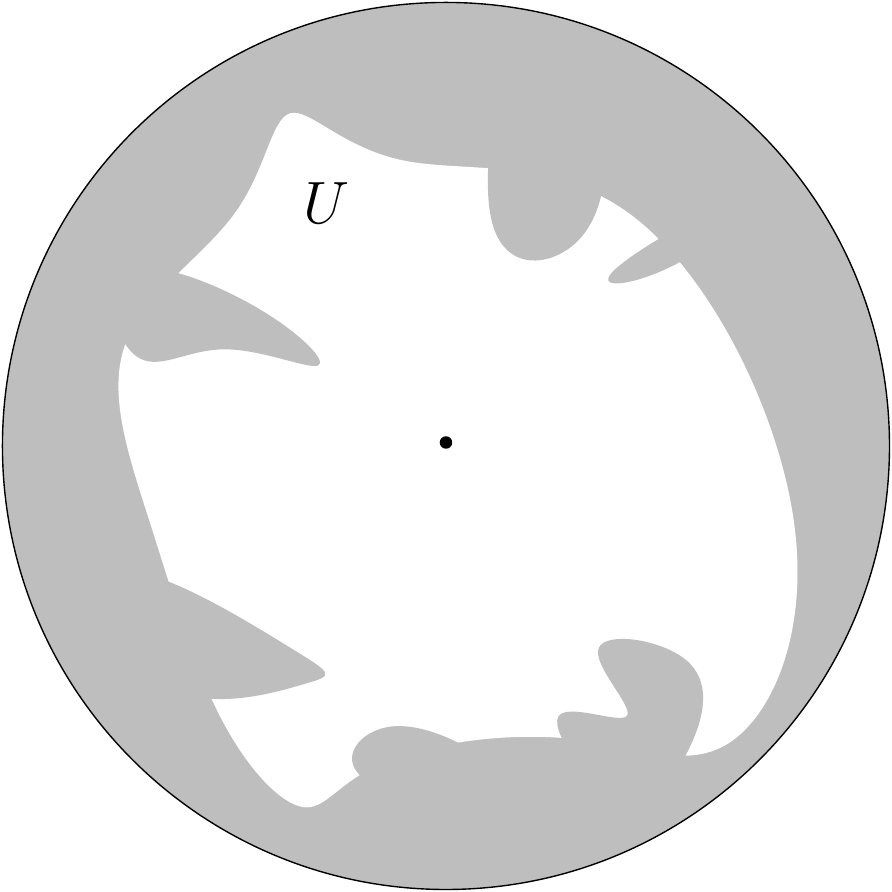}
\end{center}
\caption{The first panel indicates the domain $D$ who contains the origin. The second panel indicates a sample of $\CLE$ in the punctured disc. The third panel indicates the corresponding set $D^*$. The last panel indicates the connected component $U$.}
\end{subfigure}
\begin{subfigure}[b]{\textwidth}
\begin{center}
\includegraphics[width=0.23\textwidth]{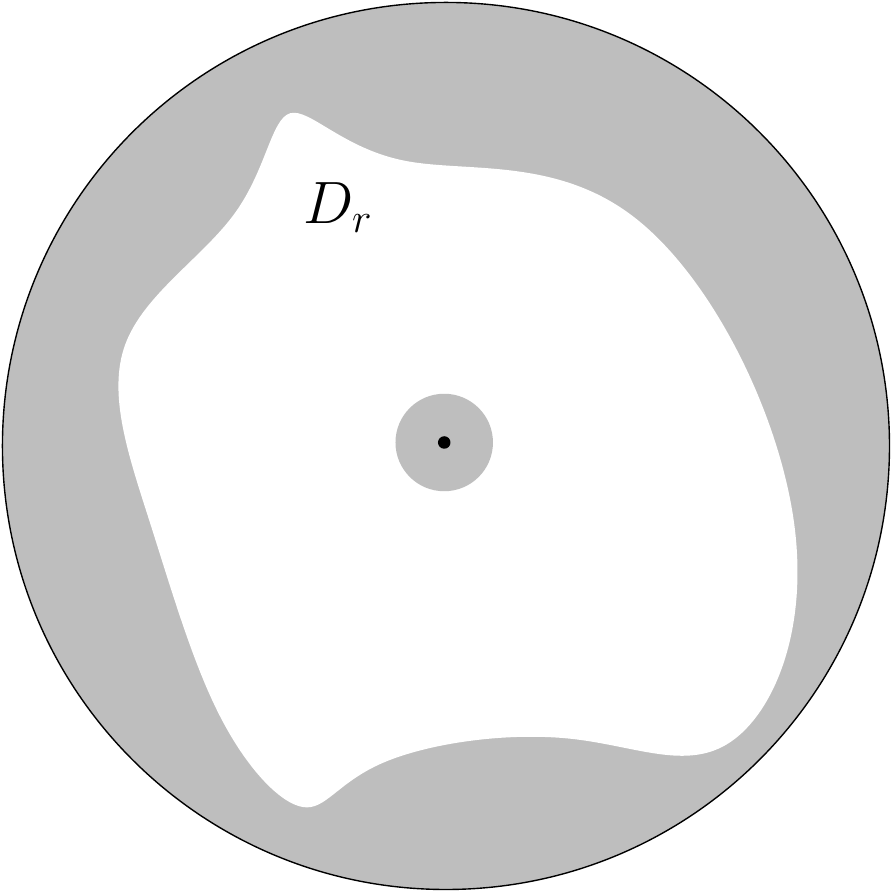}
\includegraphics[width=0.23\textwidth]{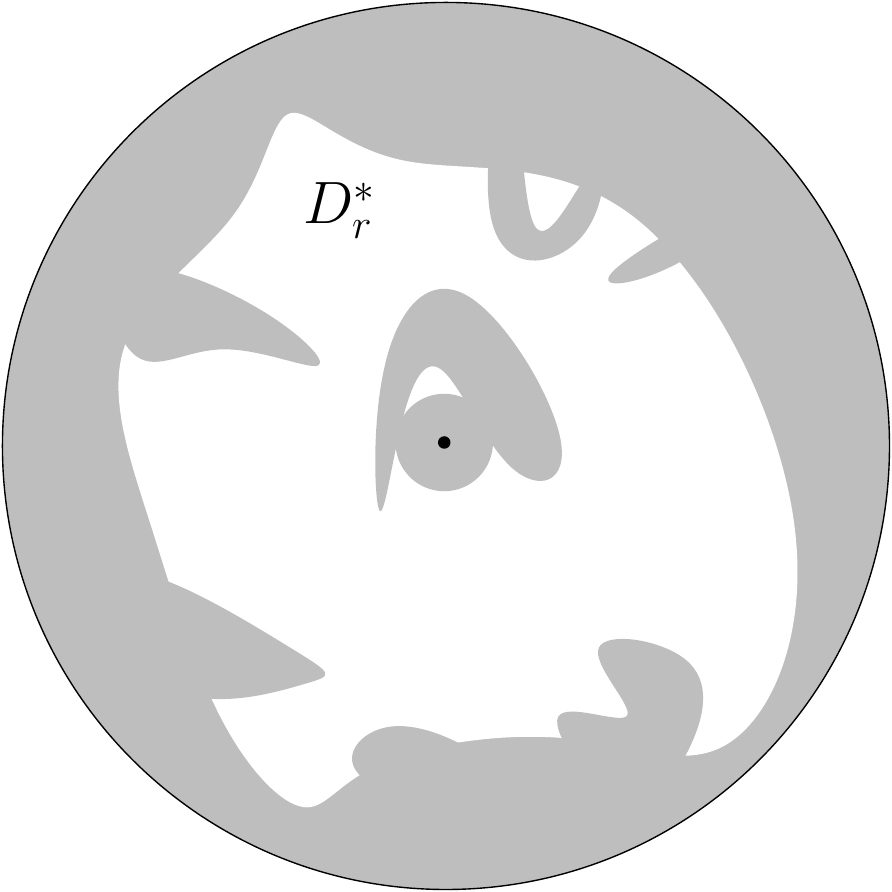}
\includegraphics[width=0.23\textwidth]{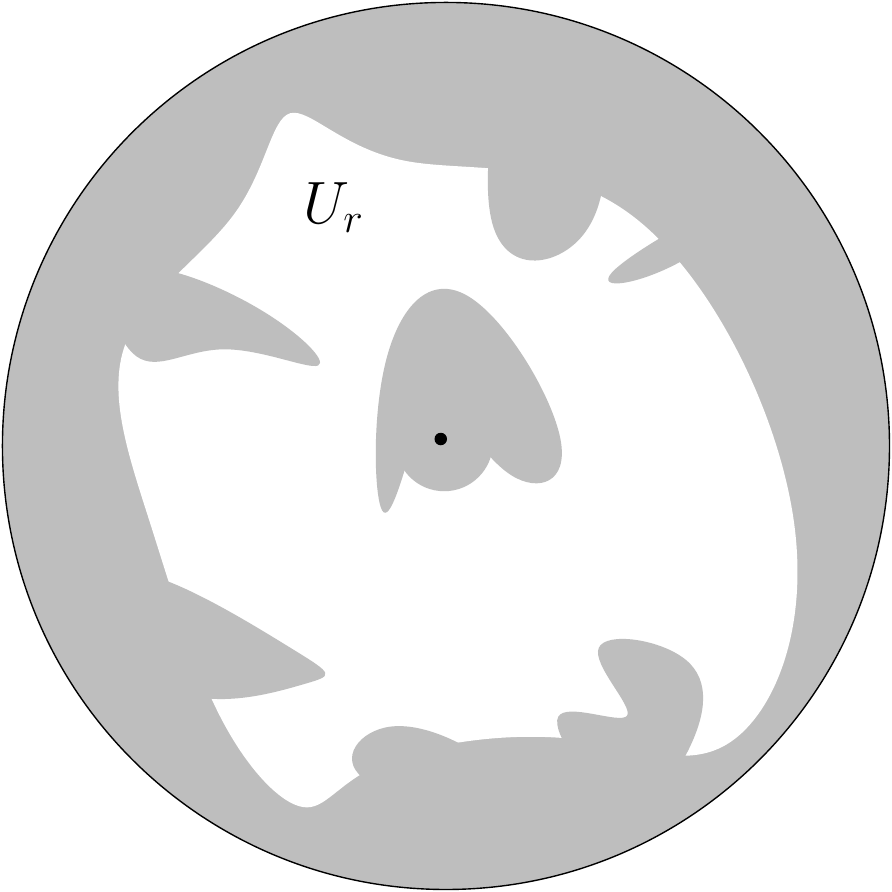}
\end{center}
\caption{The first panel indicates the set $D_r=D\cap \A_r$. The second panel indicates the corresponding $D_r^*$. The last panel indicates the connected component $U_r$.}
\end{subfigure}
\caption{\label{fig::condionedcle_domainmarkov_2}The domain Markov property of $\CLE$ in the punctured disc.}
\end{figure}

\begin{proof}
For $r>0$ small, denote $D_r=D\cap \A_r$, and denote by $D_r^*$ the set obtained by removing from $D_r$ all loops of $\Gamma^{\dag}$ that are not totally contained in $D_r$. Note that, when $r$ is small, it is unlikely that $\Gamma^{\dag}$ has a loop intersecting both $\D\setminus D$ and $r\D$. Suppose there is no such loop and let $U_r$ be the connected component of $D_r^*$ that is contained in $U$ (see Figure \ref{fig::condionedcle_domainmarkov_2}). From Proposition \ref{prop::conditionedcle_domainmarkov_1}, we know that, given $U_r$, the collection of loops of $\Gamma^{\dag}$ restricted to $U_r$ has the same law as $\CLE$ in the annulus. To complete the proof, we only need to point out that, almost surely, $\CM(U_r)\to 0$ as $r\to 0$.
\end{proof}

We will describe the relation between CLE in the disc and CLE in the punctured disc. Roughly speaking, if the loops we are interested are far from the singular point, then these loops in the punctured disc are close to those in the disc. We switch from the unit disc to the upper-half plane so that it is easier to describe ``the loops are far away from the singular point".

\begin{proposition}\label{prop::cle_punctured_disc_simply_connected}
Denote $D=\D\cap\HH$, and let $y>0$ be large. Suppose $\Gamma^{\dag}_y$ is a $\CLE$ in $\HH$ conditioned on the event that $iy$ is in the gasket and $\Gamma$ is a $\CLE$ in $\HH$, and denote $D^{\dag,*}_y$ (resp. $D^*$) as the subset of $D$ obtained by removing from $D$ all loops of $\Gamma^{\dag}_y$ (resp. $\Gamma$) that are not totally contained in $D$.
Then there exists a universal constant $C<\infty$ such that $\Gamma^{\dag}_y$ and $\Gamma$ can be coupled so that the probability of the event $\{D^{\dag,*}_y=D^*\}$ is at least
\[1-\frac{C}{\log y}.\]
Furthermore, on the event $\{D^{\dag,*}_y=D^*\}$, the collection of loops of $\Gamma^{\dag}_y$ restricted to $D_y^{\dag,*}$ is the same as the collection of loops of $\Gamma$ restricted to $D^*$.
\end{proposition}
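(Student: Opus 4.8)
The plan is to reduce Proposition~\ref{prop::cle_punctured_disc_simply_connected} to Theorem~\ref{thm::conditionedcle_construction} via a conformal map that sends the punctured-plane (or punctured-disc) picture into an annulus picture, exactly as in the proof of Lemma~\ref{lem::annuluscles_coupling}. First I would transport everything to the unit disc: let $\varphi$ be a M\"obius transformation of $\HH$ onto $\D$ sending $iy$ to $0$; then $\CLE$ in $\HH$ conditioned on $iy$ being in the gasket is, by definition, the $\varphi$-image of $\CLE$ in the punctured disc $\D^{\dag}$, and $\CLE$ in $\HH$ is the $\varphi$-image of $\CLE$ in $\D$. Under $\varphi$, the set $D=\D\cap\HH$ becomes some simply connected subdomain $\widetilde D=\varphi(D)$ of $\D$ whose closure stays at definite conformal distance from $0$: concretely, since $\dist(iy,\partial D)\asymp 1$ and $\dist(iy,D)\asymp 1$ while the conformal radius of $\HH$ seen from $iy$ is $2y$, the Koebe/Schwarz estimates from Section~2.1 show $\widetilde D\subset \A_{\delta}$ with $\delta\asymp 1/y$, i.e.\ $\log(1/\delta)\asymp \log y$ up to universal constants. (Here I use that the relevant loops of $\Gamma^{\dag}_y$ and $\Gamma$ that are ``not totally contained in $D$'' correspond under $\varphi$ exactly to loops not totally contained in $\widetilde D$, because $\varphi$ is a homeomorphism of closures.)

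Next I would compare $\CLE$ in the punctured disc with $\CLE$ in $\D$ itself, at the level of their restrictions to $\widetilde D\subset\A_\delta$. This is where I would combine two ingredients. On one side, Theorem~\ref{thm::conditionedcle_construction} couples $\Gamma^{\dag}$ (punctured-disc $\CLE$) with $\Gamma_r$ (annulus $\CLE$ in $\A_r$) so that $\widetilde D^{\dag,*}=\widetilde D^*_r$ off an event of probability $\lesssim \log(1/\delta)/\log(1/r)$, together with equality of the restricted loop collections. On the other side, I need an analogous coupling of $\Gamma_r$ with plain $\CLE$ in $\D$, restricted to $\widetilde D\subset\A_\delta$; this is essentially the ``$r\to$ macroscopic loop'' version of Lemma~\ref{lem::annuluscles_coupling}, proved the same way: realize $\CLE$ in $\D$ from a loop soup $\LL$ in $\D$, let $\LL_1$ be the loops contained in $\A_r$, and observe that on $E(\LL_1)$ the outer boundaries of outermost clusters of $\LL_1$ give $\CLE$ in $\A_r$, while $\LL_2=\LL\setminus\LL_1$ is independent and, by the Brownian-loop-measure estimate in Proposition~\ref{res::Brownianloopmeasure_estimate}, with probability $\gtrsim 1-\log(1/\delta)/\log(1/r)$ has no loop meeting $\A_\delta$; on that event the restrictions to $\widetilde D$ agree, after absorbing the bounded ratio $p(\D\setminus\widetilde D^*)/p(r)\cdots$ of disconnection probabilities via Proposition~\ref{res::p_asymptotic} exactly as in Lemma~\ref{lem::annuluscles_coupling}. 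Chaining the two couplings (and then sending $r\to0$, using the a.s.\ stabilization from the proof of Theorem~\ref{thm::conditionedcle_construction}) gives a coupling of $\Gamma^{\dag}$ and plain $\CLE$ in $\D$ with $\widetilde D^{\dag,*}=\widetilde D^*$ and matching restricted loop collections, off an event of probability $\lesssim \log(1/\delta)/\log(1/r)$ for every small $r$, hence off a null event — wait, that is too strong, so instead I keep $r$ at the scale $\delta^2$ and obtain probability $\gtrsim 1 - C\log(1/\delta)/\log(1/\delta^2) $; the cleaner route is to couple $\Gamma^\dag$ directly with $\Gamma_r$ for $r=\delta^{100}$, then $\Gamma_r$ with $\Gamma$, each step costing $O(\log(1/\delta)/\log(1/r)) = O(1/\log(1/\delta))$, summing to $O(1/\log(1/\delta)) = O(1/\log y)$.

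Finally I transport back through $\varphi^{-1}$: the coupling of the two $\D$-configurations pushes forward to a coupling of $\Gamma^{\dag}_y$ and $\Gamma$ with $D^{\dag,*}_y = D^*$ and equal restricted loop families, off an event of probability at most $C/\log y$, which is the claim. The main obstacle I anticipate is purely bookkeeping rather than conceptual: one must verify that $\delta$, the conformal modulus separating $\widetilde D$ from the puncture, is genuinely comparable to $1/y$ (so that $\log(1/\delta)\asymp\log y$), and that the two successive couplings can be glued on a common probability space without the error terms interacting badly — but both are handled by the same loop-soup restriction/independence argument already used for Lemma~\ref{lem::annuluscles_coupling} and Theorem~\ref{thm::conditionedcle_construction}, with the Brownian loop measure estimate of Proposition~\ref{res::Brownianloopmeasure_estimate} providing the quantitative $1/\log y$ bound. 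No genuinely new idea beyond those lemmas seems to be needed.
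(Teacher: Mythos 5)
Your reduction to the disc picture and your use of Theorem \ref{thm::conditionedcle_construction} for the coupling between the punctured-disc $\CLE$ and the annulus $\CLE$ are fine, but the second coupling you invoke --- between $\CLE$ in $\A_r$ and \emph{plain} $\CLE$ in $\D$, restricted to a region $\widetilde D\subset\A_\delta$, with failure probability $O(\log(1/\delta)/\log(1/r))$ --- is where the argument breaks, and it cannot be ``proved the same way'' as Lemma \ref{lem::annuluscles_coupling}. In that lemma both configurations are read off the \emph{same} loop soup conditioned on the \emph{same} event $E(\LL)$, and the normalizations cancel via the bounded ratio $p(r)p(r'/r)/p(r')$ of Proposition \ref{res::p_asymptotic}. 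In your step, $\CLE$ in $\D$ is built from an unconditioned soup $\LL$ while $\CLE$ in $\A_r$ requires conditioning $\LL_1$ on $E(\LL_1)$, an event of probability $p(r)\asymp r^{\alpha}\to 0$; the relevant ratio is $1/p(r)$, which is not bounded, and the phrase ``absorbing the bounded ratio of disconnection probabilities'' hides exactly this. A clean way to see the step is false: if such a coupling existed with error $\log(1/\delta)/\log(1/r)$, then sending $r\to 0$ at fixed $\delta$ would show that punctured-disc $\CLE$ and disc $\CLE$ have \emph{identical} restrictions to $\A_\delta$, contradicting the fact that conditioning the origin into the gasket has a nonvanishing macroscopic effect (which is precisely why the proposition's error is only $C/\log y$ and not $o(1)$ in an auxiliary parameter). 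Relatedly, your arithmetic with $r=\delta^{100}$ gives a per-step cost of $\log(1/\delta)/\log(1/r)=1/100$, a constant, not $O(1/\log(1/\delta))$; fixing this by taking $r$ much smaller only makes the contradiction above sharper.

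The missing idea, which is how the paper proceeds, is to first \emph{reveal the loop $\gamma(iy)$ containing the marked point} in the unconditioned $\CLE$ $\Gamma$. By Proposition \ref{prop::simple_annulus_cle_origin}, conditionally on $\gamma(iy)$ the remaining loops form a $\CLE$ in the annulus $\HH\setminus\gamma(iy)$, so the unconditioned object is converted into an honest annulus $\CLE$ to which a Lemma \ref{lem::annuluscles_coupling}--type loop-soup comparison \emph{does} apply. One then (i) bounds by $(R/y)^{\beta+o(1)}$ the probability that $\gamma(iy)$ reaches the circle $C_R$ with $R=y/(\log y)^{\eta}$, $\eta>1/\beta$ (this is what guarantees $D^*$ is unchanged by the revealing step and costs at most $O(1/\log y)$); (ii) couples the annulus $\CLE$ in $\HH\setminus\gamma(iy)$ with the one in $\HH\setminus B(iy,1)$ at cost $C\Lambda(C_R,C_1;\HH)\lesssim 1/\log R$; and (iii) applies Theorem \ref{thm::conditionedcle_construction} to couple the latter with $\Gamma^{\dag}_y$ at cost $C/\log y$. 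Each of the three error terms is $O(1/\log y)$, and it is step (i)--(ii), not a direct disc-versus-annulus loop-soup coupling, that supplies the genuine content of the proposition.
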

\begin{proof}
Suppose $\Gamma$ is a simple $\CLE$ in $\HH$ and $\gamma(iy)$ is the loop in $\Gamma$ that contains the point $iy$. In this proof we write $\gamma(iy)$ to refer to the union of the loop and its interior.
We fix a constant $\eta>1/\beta$, and set
\[R=\frac{y}{(\log y)^{\eta}}.\]

From Proposition \ref{prop::simple_annulus_cle_origin}, we know that, given $\gamma(iy)$, the collection of loops in $\Gamma$ restricted to $\HH\setminus \gamma(iy)$, denoted by $\Gamma_1$, has the same law as $\CLE$ in the annulus. Given $\gamma(iy)$ and on the event that $\gamma(iy)\cap C_R=\emptyset$, we have $\{D^*=D^*_1\}$ where $D^*$ (resp. $D^*_1$) is the set obtained by removing from $D$ all loops of $\Gamma$ (resp. $\Gamma_1$) that are not totally contained in $D$.

With the similar idea in the proof of Lemma \ref{lem::annuluscles_coupling}, $\Gamma_1$ can be coupled with $\CLE$ in the annulus $\HH\setminus B(iy,1)$, denoted by $\Gamma_2$, so that the probability of $\{D_1^*\neq D_2^*\}$ is at most
\[C\Lambda(\gamma(iy),C_1;\HH\setminus B(iy,1)),\]
where $D^*_2$ is the set obtained by removing from $D$ all loops of $\Gamma_2$ that are not totally contained in $D$.
On the event $\{\gamma(iy)\cap C_R=\emptyset\}$, this quantity is less than
\[C\Lambda(C_R,C_1;\HH).\]
By \cite[Lemma 4.5]{LawlerBrownianLoopMeasure}, we have that
\[\Lambda(C_R,C_1;\HH)\lesssim \frac{1}{\log R}.\]

From Theorem \ref{thm::conditionedcle_construction}, $\Gamma_2$ can be coupled with $\Gamma^{\dag}_y$ in $\HH\setminus \{iy\}$ so that, the probability of $\{D^*_2\neq D^{\dag,*}_y\}$ is at most
\[\frac{C}{\log y}.\]

Combining all these, we conclude that $\Gamma$ and $\Gamma^{\dag}_y$ can be coupled so that the probability of $\{D^*\neq D^{\dag,*}_y\}$ is less than
\[C\left((\frac{R}{y})^{-\beta+o(1)}+\frac{1}{\log y}+ \frac{1}{\log R}\right)\lesssim \frac{1}{\log y}.\]
This completes the proof.\end{proof}

\subsection{Exploration of CLE in the punctured disc}
We will explore $\CLE$ in the punctured disc in a way similar to the discrete exploration of simple $\CLE$ described in Section \ref{subsec::simplecle_exploration}.
Suppose $\Gamma^{\dag}$ is $\CLE$ in the punctured disc. We explore the loops of $\Gamma^{\dag}$ that intersect $B(x,\eps)$ for some $x\in\partial\D$. Let $\gamma^{\dag,\eps}$ be the loop of $\Gamma^{\dag}$ with the largest radius. Then we have the following conclusion which is a counterpart of Proposition \ref{res::bubble_simple_cle}, recall the definition of $u(\eps)$ in Equation (\ref{eqn::ueps_definition}):
\begin{proposition}\label{prop::bubble_conditioned_cle}
The law of $\gamma^{\dag,\eps}$ normalized by $1/u(\eps)$ converges to a measure, denoted by $\nu^{bub}_{\DO;x}$ which we call the \textbf{$\SLE$ bubble measure in $\DO$ rooted at $x$}. Furthermore, the Radon-Nikodym derivative of $\nu^{bub}_{\DO;x}$ with respect to the $\SLE$ bubble measure $\nu^{bub}_{\D;x}$ in $\D$ rooted at $x$ is given by
\[\frac{\nu^{bub}_{\DO;x}}{\nu^{bub}_{\D;x}}[d\gamma]=1_{E(\gamma)}\CR(\D\setminus\gamma)^{-\alpha}\]
where $E(\gamma)$ is the event that $\gamma$ does not surround the origin and $\D\setminus\gamma$ indicates the subset of $\D$ obtained by removing from $\D$ the bubble $\gamma$ and its interior.
\end{proposition}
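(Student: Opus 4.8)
The plan is to derive the formula by letting $r\to 0$ in Proposition~\ref{prop::bubble_annulus_cle} and identifying $\nu^{bub}_{\DO;x}$ with $\lim_{r\to 0}\nu^{bub}_{\A_r;x}$. Fix a bubble $\gamma$ rooted at $x$ that does not surround the origin, and let $d=\dist(0,\gamma\cup\inte(\gamma))>0$. In the density $1_{\{\gamma\subset\A_r\}\cap E(\gamma)}\,\tfrac{p(\A_r\setminus\gamma)}{p(\A_r)}\exp(c\Lambda(r\D,\gamma;\D))$ of Proposition~\ref{prop::bubble_annulus_cle}, the indicator equals $1$ for $r<d$, and the Brownian loop measure estimate used in the proof of Proposition~\ref{prop::cle_punctured_disc_simply_connected} gives $\Lambda(r\D,\gamma;\D)\lesssim 1/\log(d/r)\to 0$. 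For the remaining factor, set $s_r=\CM(\A_r\setminus\gamma)$: Lemma~\ref{lem::CR_cvg} gives $s_r/r\to\CR(\D\setminus\gamma)^{-1}$, and since $\CR(\D\setminus\gamma)<1$ strictly, writing $\lambda_r=r/s_r\to\CR(\D\setminus\gamma)\in(0,1)$ and bounding $p(\lambda's_r)\le p(\lambda_rs_r)\le p(\lambda''s_r)$ for fixed $\lambda'<\CR(\D\setminus\gamma)<\lambda''$, then invoking \eqref{eqn::aymptoticestimate_p_3} and sending $\lambda',\lambda''\to\CR(\D\setminus\gamma)$, yields $p(\A_r\setminus\gamma)/p(\A_r)=p(s_r)/p(r)\to\CR(\D\setminus\gamma)^{-\alpha}$. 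Hence the density converges $\nu^{bub}_{\D;x}$-a.e. to $1_{E(\gamma)}\CR(\D\setminus\gamma)^{-\alpha}$.

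To promote this to convergence of the measures, I would test against bounded $F$ supported on $\{\eps'\le R(\gamma)\le\rho\}$ with $0<\eps'<\rho<1$. There $\CR(\D\setminus\gamma)\ge\CR(\D\setminus B(x,\rho))>0$, so the limiting density and $\Lambda(r\D,\gamma;\D)$ are bounded, and $\nu^{bub}_{\D;x}[R(\gamma)\ge\eps']\asymp\eps'^{-\beta}<\infty$ by Proposition~\ref{res::bubble_simple_cle}; the measure with density $1_{E(\gamma)}\CR(\D\setminus\gamma)^{-\alpha}$ is thus $\sigma$-finite and is taken as the definition of $\nu^{bub}_{\DO;x}$. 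For dominated convergence I would establish the uniform-in-small-$r$ bound $\tfrac{p(\A_r\setminus\gamma)}{p(\A_r)}\exp(c\Lambda(r\D,\gamma;\D))\lesssim\CR(\D\setminus\gamma)^{-\alpha}$ on this set, using a Koebe/Gr\"otzsch estimate to get $s_r\lesssim r/\CR(\D\setminus\gamma)$ (hence $\lambda_r\gtrsim\CR(\D\setminus\gamma)$) and then \eqref{eqn::aymptoticestimate_p_1} together with $p(s)\asymp s^\alpha$ to get $p(s_r)/p(r)\lesssim\lambda_r^{-\alpha}$. Dominated convergence then gives $\nu^{bub}_{\A_r;x}[F]\to\nu^{bub}_{\D;x}[F(\gamma)1_{E(\gamma)}\CR(\D\setminus\gamma)^{-\alpha}]$, and sending $\eps'\to 0$, $\rho\uparrow 1$ (the residual mass controlled by the tail estimate of Proposition~\ref{res::bubble_simple_cle}) yields $\nu^{bub}_{\A_r;x}\to\nu^{bub}_{\DO;x}$.

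Finally, one must check that $\lim_{r\to 0}\nu^{bub}_{\A_r;x}$ equals the announced limit $\lim_{\eps\to 0}\tfrac1{u(\eps)}(\text{law of }\gamma^{\dag,\eps})$. For fixed $r$, Proposition~\ref{prop::bubble_annulus_cle} already gives $\tfrac1{u(\eps)}\E[F(\gamma_r^{\eps})]\to\nu^{bub}_{\A_r;x}[F]$ as $\eps\to 0$, so it suffices to bound $\tfrac1{u(\eps)}\bigl|\E[F(\gamma^{\dag,\eps})]-\E[F(\gamma_r^{\eps})]\bigr|$, uniformly in small $\eps$, by a quantity that vanishes as $r\to 0$. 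I would couple $\Gamma^{\dag}$ with $\Gamma_r$ by Theorem~\ref{thm::conditionedcle_construction} applied with $D=B(x,\rho)\cap\D\subset\A_{1-\rho}$; on the coupling-success event the loops lying in $D$ coincide, so, taking $F$ supported in $\{R(\gamma)\le\rho_0\}$ with $\rho_0<\rho$, the values $F(\gamma^{\dag,\eps})$ and $F(\gamma_r^{\eps})$ can differ only when the coupling fails or when in one of the two ensembles a loop of radius exceeding $\rho$ meets $B(x,\eps)$.

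The hard part is this last estimate. The coupling-failure probability from Theorem~\ref{thm::conditionedcle_construction} is of order $1/\log(1/r)$, hence of order one in $\eps$, whereas we divide by $u(\eps)=\eps^{\beta+o(1)}\to 0$, so a naive union bound is useless. The resolution should be that, by the proof of Lemma~\ref{lem::annuluscles_coupling}, the coupling fails only when some Brownian loop connects scale $r$ to scale $1-\rho$ near the origin --- an event essentially independent, up to a negligible correction coming from loops that are simultaneously macroscopic and pass near $x$, of the rare event $\{F(\gamma^{\dag,\eps})\neq 0\}$, which depends only on the configuration near $x$ at scale $\eps$; the joint probability is then $\lesssim u(\eps)/\log(1/r)$, and the same decoupling absorbs the ``large loop near $x$'' term into the $\rho\uparrow 1$ exhaustion. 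Carrying out this independence-up-to-small-error argument carefully is, I expect, the crux of the proof.
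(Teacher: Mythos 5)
Your first paragraph is precisely the paper's proof: the paper disposes of this proposition in a single line, ``Combination of Proposition \ref{prop::bubble_annulus_cle} and Lemma \ref{lem::CR_cvg} implies the conclusion,'' which amounts to exactly your computation that the annulus density $1_{\{\gamma\subset\A_r\}\cap E(\gamma)}\frac{p(\A_r\setminus\gamma)}{p(\A_r)}\exp(c\Lambda(r\D,\gamma;\D))$ tends to $1_{E(\gamma)}\CR(\D\setminus\gamma)^{-\alpha}$ as $r\to 0$, using Lemma \ref{lem::CR_cvg} and Equation (\ref{eqn::aymptoticestimate_p_3}). The further points you raise --- domination to pass from pointwise convergence of densities to convergence of measures, and the interchange of the $\eps\to 0$ and $r\to 0$ limits, which you correctly identify as the genuinely delicate step --- are simply not addressed in the paper, so your proposal follows the same route but is more careful than the published argument.
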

\begin{proof}
Combination of Proposition \ref{prop::bubble_annulus_cle} and Lemma \ref{lem::CR_cvg} implies the conclusion.
\end{proof}

Suppose $\gamma$ is a bubble in $\D$ rooted at $x$, and recall that $R(\gamma)$ is the smallest $r$ for which $\gamma$ is contained in $B(x,r)$. Recall the constants defined in Equation (\ref{eqn::universal_relation_constants}); we have the following quantitative results for $\nu^{bub}_{\D;x}$ and $\nu^{bub}_{\DO;x}$:
\begin{lemma}\label{lem::bubble_conditioned_finite}
\[\int 1_{E(\gamma)\cap[R(\gamma)\ge 1/2]}\CR(\D\setminus\gamma)^{-\eta}\nu^{bub}_{\D;x}[d\gamma]<\infty \quad \text{as long as}\quad \eta< 1-\kappa/8.\]
In particular, this implies that
\[\nu^{bub}_{\DO;x}[R(\gamma)\ge 1/2]<\infty.\]
\end{lemma}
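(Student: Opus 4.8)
The plan is to establish finiteness of the integral $\int 1_{E(\gamma)\cap[R(\gamma)\ge 1/2]}\CR(\D\setminus\gamma)^{-\eta}\,\nu^{bub}_{\D;x}[d\gamma]$ by dyadically decomposing according to the conformal radius $\CR(\D\setminus\gamma)$, or equivalently (up to the factor of $4$ from Koebe) according to $d=\dist(0,\gamma)$. First I would invoke the conformal-radius estimate $d\le \CR(\D\setminus\gamma)\le 4d$ from the preliminaries, so that $\CR(\D\setminus\gamma)^{-\eta}\asymp d^{-\eta}$ on the event $E(\gamma)$ (where $0\notin\gamma$, so $d>0$). Thus it suffices to bound $\sum_{j\ge 1} 2^{j\eta}\,\nu^{bub}_{\D;x}\bigl[E(\gamma)\cap\{R(\gamma)\ge 1/2\}\cap\{d\in (2^{-j-1},2^{-j}]\}\bigr]$, so the whole problem reduces to showing that $\nu^{bub}_{\D;x}[E(\gamma),\ R(\gamma)\ge 1/2,\ \dist(0,\gamma)\le 2^{-j}]$ decays faster than $2^{-j\eta}$ — ideally like $2^{-j(1-\kappa/8)}\cdot(\text{lower order})$, which would close the sum for every $\eta<1-\kappa/8$.

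The key estimate on this bubble-measure tail is where the $\SLE_\kappa$ one-point function enters. A bubble $\gamma$ rooted at $x$ that is macroscopic ($R(\gamma)\ge 1/2$) but comes within distance $2^{-j}$ of the origin without surrounding it looks locally, near the origin, like an $\SLE_\kappa$ curve in a simply connected domain passing within $2^{-j}$ of an interior point. The probability (under the normalized $\SLE_\kappa$ measure, hence up to a bounded factor under $\nu^{bub}_{\D;x}$ restricted to $R(\gamma)\ge 1/2$, which has finite mass by Proposition~\ref{res::bubble_simple_cle}(2)) that such a curve gets within distance $\epsilon$ of a fixed interior point without disconnecting it is of order $\epsilon^{1-\kappa/8+o(1)}$; this is exactly the exponent governing $u(\epsilon)=\epsilon^{\beta+o(1)}$ with $\beta = 8/\kappa-1$ being the "disconnect" exponent and $1-\kappa/8$ being its complement — the "get close but don't disconnect" exponent. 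Concretely I would reduce to this by conditioning: on $\{R(\gamma)\ge 1/2\}$ the conditional law of $\gamma$ is a probability measure comparable (in total mass) to a constant, and under it the event $\{\dist(0,\gamma)\le 2^{-j}\}\cap E(\gamma)$ has probability $\lesssim 2^{-j(1-\kappa/8)+o(j)}$ by the SLE interior one-point / two-sided radial estimate. Plugging in, $\sum_j 2^{j\eta}\cdot 2^{-j(1-\kappa/8)+o(j)}<\infty$ precisely when $\eta<1-\kappa/8$.

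The main obstacle, I expect, is making the "$\SLE_\kappa$ gets within $\epsilon$ of an interior point without disconnecting it has probability $\epsilon^{1-\kappa/8+o(1)}$" input rigorous in the precise form needed — i.e. controlling the $o(1)$ uniformly and transferring it from a single-curve $\SLE_\kappa$ statement to the $\nu^{bub}_{\D;x}$ bubble measure. The cleanest route is probably to avoid a bare-hands SLE computation and instead go through the loop-soup construction: express $\nu^{bub}_{\D;x}[E(\gamma),R(\gamma)\ge 1/2, \dist(0,\gamma)\le 2^{-j}]$ via the exploration picture and relate the event to $\{\gamma^{\eps}\text{ comes near }0\text{ but does not surround it}\}$, then use the known asymptotics of $u(\eps)$ together with the comparison in Proposition~\ref{res::bubble_simple_cle}. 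Once that tail bound is in hand, the dyadic sum is routine, and the "in particular" clause follows by taking $\eta=0<1-\kappa/8$ and noting $\CR(\D\setminus\gamma)^0=1$, so $\nu^{bub}_{\DO;x}[R(\gamma)\ge 1/2]=\nu^{bub}_{\D;x}[1_{E(\gamma)\cap[R(\gamma)\ge 1/2]}\CR(\D\setminus\gamma)^{-\alpha}]<\infty$ since $\alpha = (8-\kappa)(3\kappa-8)/(32\kappa)<1-\kappa/8$ for $\kappa\in(8/3,4]$, which one checks directly.
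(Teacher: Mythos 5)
Your plan would work, but it reaches the key estimate by a more roundabout route than the paper, and the step you yourself flag as the main obstacle is exactly where the paper has a cleaner device. The paper's proof is short: conditioned on $\{R(\gamma)\ge 1/2\}\cap E(\gamma)$ it parameterizes the bubble by capacity seen from the origin, stops at the first exit time $S$ from $B(x,1/2)$, and observes that, given $\gamma[0,S]$, the remaining piece $\gamma[S,T]$ is a chordal $\SLE_\kappa$ in $\D\setminus\gamma[0,S]$ from $\gamma(S)$ to $x$; the integral then reduces to the single-curve moment bound $\E[\CR(\HH\setminus\gamma;i)^{-\eta}]<\infty$ for $\eta<1-\kappa/8$, which is quoted directly from Lawler--Viklund. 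Your dyadic decomposition in $\dist(0,\gamma)$ combined with the interior one-point estimate $\PP[\dist(i,\gamma)\le\eps]\lesssim \eps^{1-\kappa/8+o(1)}$ is precisely the unpacked form of that same moment bound, so the underlying SLE input is identical; the dyadic version buys transparency about where the exponent comes from but forces you to re-derive the moment bound rather than cite it. Where your plan is genuinely vaguer is the transfer from a single chordal curve to $\nu^{bub}_{\D;x}$: the clean mechanism is the conditioning at the exit time of $B(x,1/2)$ just described, not a detour through the loop-soup construction and the $u(\eps)$ asymptotics, which would be substantially more work for no gain. Two smaller points: describing $1-\kappa/8$ as the ``complement'' of the disconnection exponent $\beta=8/\kappa-1$ is not accurate (their sum is $8/\kappa-\kappa/8$, not $1$); the correct interpretation is $2-d$ with $d=1+\kappa/8$ the curve dimension, though this does not affect the argument. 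And for the ``in particular'' clause the relevant application is $\eta=\alpha$ rather than $\eta=0$; your check that $\alpha=(8-\kappa)(3\kappa-8)/(32\kappa)<1-\kappa/8=(8-\kappa)/8$ for all $\kappa\in(8/3,4]$ is correct and is indeed the point, and it is a worthwhile addition since the paper leaves this verification implicit.
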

\begin{proof}
Conditioned on $\{R(\gamma)>1/2\}\cap E(\gamma)$, we can parameterize the bubble $\gamma$ clockwise by the capacity seen from the origin starting from the root and ending at the root: $(\gamma(t),0\le t\le T)$. Suppose $S$ is the first time that $\gamma$ exits the ball $B(x,1/2)$. Then we know that, given $\gamma[0,S]$, the future part of the curve $\gamma[S,T]$ has the same law as a chordal $\SLE$ in $\D\setminus\gamma[0,S]$ from $\gamma(S)$ to $x$. Thus we only need to show that the integral is finite when we replace the curve by a chordal $\SLE$ curve.

Precisely, suppose $\gamma=(\gamma_t,t\ge 0)$ is a chordal $\SLE$ in the upper-half plane $\HH$ from $0$ to $\infty$ (parameterized by the half-plane capacity). We only need to show that
\begin{equation}\label{eqn::martingale_compensation_estimate}
\E[\CR(\HH\setminus\gamma;i)^{-\eta}]<\infty,
\end{equation}
where $\CR(\HH\setminus\gamma; i)$ is the conformal radius of $\HH\setminus\gamma$ in $\HH$ seen from $i$. This is true for chordal SLE, see \cite[Equation (6.9)]{ViklundLawlerMultifractalSpectrum}). \end{proof}
\begin{lemma}\label{lem::bubble_conditionedcle_quantitative}
\[\int R(\gamma)^{\eta}\nu^{bub}_{\DO;x}[d\gamma]<\infty \quad \text{as long as}\quad \eta>\beta.\]
Note that $\beta\in [1,2)$, thus we have
\[\int R(\gamma)^2 \nu^{bub}_{\DO;x}[d\gamma]<\infty.\]
\end{lemma}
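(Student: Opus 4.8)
The plan is to split the integral according to the scale $R(\gamma)$ into a ``macroscopic'' part $\{R(\gamma)\ge 1/2\}$ and a dyadic decomposition of the ``microscopic'' part $\{R(\gamma)<1/2\}$, using the Radon--Nikodym formula from Proposition \ref{prop::bubble_conditioned_cle} and the known behaviour of $\nu^{bub}_{\D;x}$ from Proposition \ref{res::bubble_simple_cle}(2). On the macroscopic part, $R(\gamma)^{\eta}$ is bounded by $(\sqrt 2)^{\eta}$, so finiteness of $\int_{\{R(\gamma)\ge 1/2\}} R(\gamma)^{\eta}\, \nu^{bub}_{\DO;x}[d\gamma]$ follows immediately from the already-established fact that $\nu^{bub}_{\DO;x}[R(\gamma)\ge 1/2]<\infty$ (Lemma \ref{lem::bubble_conditioned_finite}). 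So the real content is the small-scale estimate.

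For the microscopic part I would write, using Proposition \ref{prop::bubble_conditioned_cle},
\[
\int_{\{R(\gamma)<1/2\}} R(\gamma)^{\eta}\, \nu^{bub}_{\DO;x}[d\gamma]
= \int_{\{R(\gamma)<1/2\}\cap E(\gamma)} R(\gamma)^{\eta}\,\CR(\D\setminus\gamma)^{-\alpha}\, \nu^{bub}_{\D;x}[d\gamma],
\]
and then control the conformal radius factor by the Koebe/Schwarz estimate $d\le \CR(\D\setminus\gamma)\le 4d$ where $d=\dist(0,\gamma\cup\inte(\gamma))$. Since $\gamma$ is rooted at $x\in\partial\D$ and has radius $R(\gamma)$, we have $d\ge 1-R(\gamma)\gtrsim 1$ when $R(\gamma)<1/2$, hence $\CR(\D\setminus\gamma)^{-\alpha}\lesssim 1$ uniformly on this event. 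Therefore the microscopic integral is bounded by a constant times $\int_{\{R(\gamma)<1/2\}} R(\gamma)^{\eta}\, \nu^{bub}_{\D;x}[d\gamma]$, and this last quantity is finite for $\eta>\beta$ by a standard dyadic summation: decompose $\{2^{-k-1}\le R(\gamma)<2^{-k}\}$ for $k\ge 1$, bound $R(\gamma)^{\eta}\le 2^{-k\eta}$ on each piece, and use $\nu^{bub}_{\D;x}[R(\gamma)\ge 2^{-k-1}]\asymp 2^{(k+1)\beta}$ from Proposition \ref{res::bubble_simple_cle}(2) to get $\sum_k 2^{-k\eta}2^{k\beta}<\infty$ precisely when $\eta>\beta$. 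The final sentence about $\int R(\gamma)^2\,\nu^{bub}_{\DO;x}[d\gamma]<\infty$ is then immediate, since $\beta=8/\kappa-1\in[1,2)$ for $\kappa\in(8/3,4]$, so one may take $\eta=2>\beta$.

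The main obstacle — and the only place one must be slightly careful — is the uniform bound on $\CR(\D\setminus\gamma)^{-\alpha}$ near the small-scale regime: one needs that a bubble of small radius rooted on the boundary stays genuinely away from the origin, so that the conformal-radius factor does not blow up and spoil the summation. This is exactly where the decomposition at scale $1/2$ is chosen, matching the threshold already used in Lemma \ref{lem::bubble_conditioned_finite}; on $\{R(\gamma)\ge 1/2\}$ the conformal radius can indeed degenerate (the bubble may surround points near the origin), but there we do not need it because the $R(\gamma)^{\eta}$ factor is harmless and the finiteness is imported wholesale from the previous lemma. Everything else is the routine dyadic estimate using Result \ref{res::bubble_simple_cle}.
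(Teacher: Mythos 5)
Your proof is correct and follows essentially the same route as the paper: split at $R(\gamma)=1/2$, import finiteness of the macroscopic part from Lemma \ref{lem::bubble_conditioned_finite}, and on the microscopic part use the Radon--Nikodym formula of Proposition \ref{prop::bubble_conditioned_cle} together with the tail estimate of Proposition \ref{res::bubble_simple_cle}(2); you merely spell out the two steps the paper leaves implicit, namely the uniform bound $\CR(\D\setminus\gamma)^{-\alpha}\lesssim 1$ on $\{R(\gamma)<1/2\}$ via Koebe and the dyadic summation giving convergence exactly for $\eta>\beta$. (The only nitpick is that on $\{R(\gamma)\ge 1/2\}$ the correct crude bound is $R(\gamma)^{\eta}\le 2^{\eta}$ rather than $(\sqrt 2)^{\eta}$, which is immaterial.)
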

\begin{proof}
We divide the integral into two parts:
\[\int 1_{[R(\gamma)\le 1/2]}R(\gamma)^{\eta}\nu^{bub}_{\DO;x}[d\gamma],\quad \text{and}\quad \int 1_{[R(\gamma)\ge 1/2]}R(\gamma)^{\eta}\nu^{bub}_{\DO;x}[d\gamma].\]
The first part is finite due to Proposition \ref{prop::bubble_conditioned_cle} and Proposition \ref{res::bubble_simple_cle}. The second part is finite by Lemma \ref{lem::bubble_conditioned_finite}.
\end{proof}

Now we can describe the exploration process of $\CLE$ in the punctured disc. Most of the proofs are similar to the proofs in \cite{SheffieldWernerCLE} for simple $\CLE$s. To be self-contained, we rewrite the proofs in the current setting.

%
%
Suppose $(\gamma^{\dag}_t,t\ge 0)$ is a Poisson point process with intensity
\[\nu^{bub}_{\DO}=\int_{\partial\D} dx \nu^{bub}_{\DO;x}.\]
For any time $t$, let $f^{\dag}_t$ be the conformal map from $\D\setminus\gamma^{\dag}_t$ onto $\D$ normalized at the origin.
For any fixed $T>0$ and $r>0$, let $t_1(r)<...<t_j(r)$ be the times $t$ before $T$ at which $R(\gamma^{\dag}_t)$ is greater than $r$. Define
\[\Psi^{\dag,r}_T=f^{\dag}_{t_j(r)}\circ\cdots\circ f^{\dag}_{t_1(r)}.\]
Then we have the following:
\begin{lemma}
$\Psi^{\dag,r}_T$ converges almost surely in the Carath\'eodory topology seen from the origin towards some conformal map, denoted as $\Psi^{\dag}_T$, as $r$ goes to zero.
\end{lemma}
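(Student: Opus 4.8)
The plan is to follow the argument of \cite{SheffieldWernerCLE} for simple $\CLE$ in $\D$, with $\nu^{bub}_{\DO;x}$ in place of $\nu^{bub}_{\D;x}$, the only genuinely new input being the finiteness of a total-capacity integral supplied by Lemmas~\ref{lem::bubble_conditioned_finite} and \ref{lem::bubble_conditionedcle_quantitative}. Throughout, $T>0$ is fixed. Every bubble $\gamma^{\dag}_t$ of the Poisson point process satisfies $E(\gamma^{\dag}_t)$ (it does not surround the origin), so $f^{\dag}_t\colon\D\setminus\gamma^{\dag}_t\to\D$ is well defined, and by the definition of conformal radius $(f^{\dag}_t)'(0)=1/\CR(\D\setminus\gamma^{\dag}_t)=e^{\capa(\gamma^{\dag}_t)}\ge 1$.

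First I would record the multiplicativity of the conformal radius along the composition: since each $f^{\dag}_t$ fixes the origin, the chain rule gives
\[
\log (\Psi^{\dag,r}_T)'(0)=\sum_{i=1}^{j}\capa(\gamma^{\dag}_{t_i(r)})=-\log\CR\big((\Psi^{\dag,r}_T)^{-1}(\D)\big),
\]
which is nondecreasing as $r\downarrow 0$ and increases to $\sum_{t<T}\capa(\gamma^{\dag}_t)$, the sum over all bubbles of the point process occurring before time $T$. By Campbell's formula and rotational invariance,
\[
\E\Big[\sum_{t<T}\capa(\gamma^{\dag}_t)\Big]=T\int\capa(\gamma)\,\nu^{bub}_{\DO}[d\gamma]=2\pi\,T\int\capa(\gamma)\,\nu^{bub}_{\DO;x}[d\gamma].
\]
I would then argue this integral is finite. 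For $R(\gamma)$ small one has $\capa(\gamma)\lesssim R(\gamma)^2$, and $\int R(\gamma)^2\,\nu^{bub}_{\DO;x}[d\gamma]<\infty$ by Lemma~\ref{lem::bubble_conditionedcle_quantitative}; for $R(\gamma)\ge 1/2$, using $\capa(\gamma)=-\log\CR(\D\setminus\gamma)\lesssim_{\eps}\CR(\D\setminus\gamma)^{-\eps}$ for any $\eps>0$ together with the Radon--Nikodym derivative of Proposition~\ref{prop::bubble_conditioned_cle}, the contribution is bounded by $\int 1_{E(\gamma)\cap[R(\gamma)\ge 1/2]}\CR(\D\setminus\gamma)^{-\alpha-\eps}\,\nu^{bub}_{\D;x}[d\gamma]$, which is finite for $\eps$ small by Lemma~\ref{lem::bubble_conditioned_finite} since $\alpha<1-\kappa/8$; the intermediate range is handled by the same comparison to a chordal $\SLE_{\kappa}$ as in those lemmas. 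Hence $\sum_{t<T}\capa(\gamma^{\dag}_t)<\infty$ almost surely, so $(\Psi^{\dag,r}_T)'(0)$ increases to a finite limit, equivalently $\CR\big((\Psi^{\dag,r}_T)^{-1}(\D)\big)$ decreases to a strictly positive limit.

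Finally I would upgrade this to Carathéodory convergence of the maps themselves. The inverse maps $(\Psi^{\dag,r}_T)^{-1}\colon\D\to\D$ form a normal family, so it suffices to show they are Cauchy locally uniformly on $\D$ as $r\downarrow 0$; the limit $\psi$ is then univalent by Hurwitz's theorem (its derivative at $0$ is the positive limit of the conformal radii), and we set $\Psi^{\dag}_T=\psi^{-1}$, with $(\Psi^\dag_T)^{-1}(\D)=\psi(\D)$ the Carathéodory limit of the domains $(\Psi^{\dag,r}_T)^{-1}(\D)$. For $r'<r$, the composition $(\Psi^{\dag,r'}_T)^{-1}$ is obtained from $(\Psi^{\dag,r}_T)^{-1}$ by inserting, at the appropriate time-ordered slots, the normalized maps $h^{-1}\colon\D\to\D\setminus\delta$ associated to the bubbles $\delta$ with $r'<R(\delta)\le r$ occurring before $T$. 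Each such $h^{-1}$ differs from the identity by $O(\capa(\delta))$ on a fixed compact subset of $\D$ (Schwarz lemma plus the Koebe estimate), and all maps in the compositions are uniformly Lipschitz on compact subsets of $\D$ (Koebe distortion, being normalized univalent maps into $\D$); propagating these perturbations through the composition shows that $(\Psi^{\dag,r}_T)^{-1}$ and $(\Psi^{\dag,r'}_T)^{-1}$ differ by at most $O\big(\sum_{r'<R(\gamma^{\dag}_t)\le r,\ t<T}\capa(\gamma^{\dag}_t)\big)$ on that set, which by the previous paragraph is the tail of an almost surely convergent sum and tends to $0$ as $r,r'\to 0$.

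I expect the main obstacle to be this last ``stability under insertion of small-capacity maps'' estimate: one must control how the Koebe-distortion Lipschitz constants of the outer maps compound along an arbitrarily long composition so that the accumulated displacement is genuinely of the order of the total inserted capacity, rather than a product that could grow. This is exactly the technical point handled in \cite{SheffieldWernerCLE}, and I would adapt their argument --- Koebe distortion on a slightly enlarged disc, plus a Gronwall-type bound on the accumulated displacement --- to the present setting; the finiteness of $\int\capa(\gamma)\,\nu^{bub}_{\DO;x}[d\gamma]$ from the second paragraph is the only new ingredient needed to make it run.
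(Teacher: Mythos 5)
Your proof is correct and follows essentially the same route as the paper: reduce to the almost-sure finiteness of $\sum_{t<T}\capa(\gamma^{\dag}_t)$ via Campbell's formula and Lemma~\ref{lem::bubble_conditionedcle_quantitative}, then invoke the stability-of-Loewner-chains argument of \cite{SheffieldWernerCLE} for the Carath\'eodory convergence. The only (harmless) difference is that the paper disposes of the bubbles with $R(\gamma)\ge 1/2$ by noting that almost surely only finitely many of them occur before time $T$ (Lemma~\ref{lem::bubble_conditioned_finite}), rather than bounding their expected total capacity as you do.
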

\begin{proof}
Lemma \ref{lem::bubble_conditionedcle_quantitative} guarantees that
\begin{eqnarray*}
\lefteqn{\E\left[\sum_{t<T}\capa (\gamma^{\dag}_t)1_{R(\gamma^{\dag}_t)\le 1/2}\right]}\\
&=& T\nu^{bub}_{\DO}[\capa(\gamma^{\dag})1_{\{R(\gamma^{\dag})\le 1/2\}}]\\
&\lesssim& T \nu^{bub}_{\DO}[R(\gamma^{\dag})^2 1_{\{R(\gamma^{\dag})\le 1/2\}}]<\infty.
\end{eqnarray*}
Since there are only finitely many times $t$ before $T$ at which $R(\gamma^{\dag}_t)\ge1/2$, we have that, almost surely,
$$\sum_{t<T}\capa(\gamma^{\dag}_t)<\infty,$$
and this implies the convergence in the Carath\'eodory topology (see \cite[Stability of Loewner chains]{SheffieldWernerCLE}).
\end{proof}
Define $(D_t^{\dag}=(\Psi^{\dag}_t)^{-1}(\D), t\ge 0)$. This is a decreasing sequence of simply connected domains containing the origin, and we call it \textbf{the continuous exploration process of $\CLE$ in the punctured disc}. Write, for $t>0$, $L_t^{\dag}=(\Psi^{\dag}_t)^{-1}(\gamma^{\dag}_t)$. It is clear that
\[D^{\dag}_t=\bigcap_{s\le t}D^{\dag}_s,\quad D^{\dag}_{t+}:=\bigcup_{s>t}D^{\dag}_s=D^{\dag}_t\setminus L^{\dag}_t.\]

%

Suppose $(\gamma_t,t\ge 0)$ is a Poisson point process with intensity $\nu^{bub}_{\D}$, and $(D_t,t\le \tau)$ is the continuous exploration process of simple $\CLE$ in $\D$ defined from the process $(\gamma_t,t\ge 0)$ as described in Section \ref{subsec::simplecle_exploration}. Define, for $\eta>0$,
\begin{equation}\label{eqn::martingale_compensation}
\theta(\eta):=\int (e^{\eta \capa(\gamma)}-1)1_{E(\gamma)}\nu^{bub}_{\D}[d\gamma],
\end{equation}
where $E(\gamma)$ is the event that $\gamma$ does not surround the origin, and we first assume that $\theta(\alpha)$ is a positive finite constant. Then the relation between the process $(D^{\dag}_t,t\ge 0)$ and the process $(D_t,t\le \tau)$ can be described using the following proposition.
\begin{proposition}\label{prop::continuousexploration_RN}
For any $t>0$, the law of $(\gamma^{\dag}_s,s<t)$ is the same as the law of $(\gamma_s,s<t)$ conditioned on $\{\tau\ge t\}$ and weighted by $M_t$ where
\begin{equation}
M_t=\exp\left(\alpha\sum_{s<t} \capa(\gamma_s)-\theta(\alpha) t\right).
\end{equation}
In particular, for any $t>0$, the law of $D^{\dag}_t$ is the same as the law of $D_t$ conditioned on $\{\tau\ge t\}$ and weighted by $$\CR(D_t)^{-\alpha}e^{-\theta(\alpha) t}.$$
\end{proposition}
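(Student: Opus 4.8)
The plan is to recognize the first assertion as a Girsanov-type absolute-continuity identity between Poisson point processes, with the density supplied by Proposition~\ref{prop::bubble_conditioned_cle}; the particular consequence for $D^{\dag}_t$ then follows because $M_t$ turns out to be a deterministic function of $D_t$.

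First I would rewrite both sides as laws of Poisson point processes on the space of bubbles. By construction $(\gamma^{\dag}_s,s<t)$ is a Poisson point process with intensity $\nu^{bub}_{\DO}\times[0,t)$. On the other side, decompose the Poisson point process $(\gamma_s)$ into the two independent sub-processes consisting of the bubbles that surround the origin (those in $E(\gamma)^c$) and of those that do not; since $\tau$ is the first arrival time of the former sub-process and $\{\tau\ge t\}$ is measurable with respect to it, conditioning on $\{\tau\ge t\}$ leaves the latter sub-process unchanged, so that $(\gamma_s,s<t)$ given $\{\tau\ge t\}$ is a Poisson point process with intensity $1_{E(\gamma)}\nu^{bub}_{\D}\times[0,t)$. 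By Proposition~\ref{prop::bubble_conditioned_cle} together with the identity $\capa(\gamma)=-\log\CR(\D\setminus\gamma)$, the Radon--Nikodym derivative of $\nu^{bub}_{\DO}$ with respect to $1_{E(\gamma)}\nu^{bub}_{\D}$ equals $\CR(\D\setminus\gamma)^{-\alpha}=e^{\alpha\capa(\gamma)}$.

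Next I would apply the standard change-of-measure formula for Poisson point processes: if $\mu$ is $\sigma$-finite on a space $S$ and $\phi\ge0$ satisfies $\int_S|\phi-1|\,d\mu<\infty$, then under $N\sim\mathrm{PPP}(\mu)$ the variable $Z(N):=\big(\prod_{x\in N}\phi(x)\big)\exp(-\int_S(\phi-1)\,d\mu)$ has mean one, and the law of $\mathrm{PPP}(\phi\mu)$ is absolutely continuous with respect to the law of $\mathrm{PPP}(\mu)$ with density $Z$; this is checked by comparing Laplace functionals through the exponential formula. Taking $S$ to be the pairs $(\gamma,s)$ with $E(\gamma)$ and $s\in[0,t)$, $\mu=1_{E(\gamma)}\nu^{bub}_{\D}\times[0,t)$ and $\phi(\gamma,s)=e^{\alpha\capa(\gamma)}$, one gets $\phi\mu=\nu^{bub}_{\DO}\times[0,t)$ and $Z(N)=\exp(\alpha\sum_{s<t}\capa(\gamma_s)-t\theta(\alpha))=M_t$, with $\theta(\alpha)$ as in (\ref{eqn::martingale_compensation}). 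The required hypothesis $\int_S|\phi-1|\,d\mu=t\,\theta(\alpha)<\infty$ is exactly the standing assumption of the proposition, and it also ensures $\sum_{s<t}\capa(\gamma_s)<\infty$ almost surely (as in the preceding lemma), so that $M_t$ is well defined and $\E[M_t\cond\tau\ge t]=e^{-\theta(\alpha)t}\,\E[e^{\alpha\sum_{s<t}\capa(\gamma_s)}\cond\tau\ge t]=1$. This gives the first statement. If one prefers not to quote the formula in infinite-intensity form, one can instead first restrict to bubbles of radius at least $\rho$ — a Poisson point process of finite intensity, for which the identity is an elementary finite change of measure — and then send $\rho\to0$, again using $\sum_{s<t}\capa(\gamma_s)<\infty$.

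For the ``in particular'' statement, $D^{\dag}_t$ and $D_t$ are measurable functions of $(\gamma^{\dag}_s,s<t)$ and of $(\gamma_s,s<t)$ respectively, so the reweighting transports verbatim, and it only remains to observe that on $\{\tau\ge t\}$ the weight $M_t$ depends on the bubbles only through $D_t$. Indeed, for $s<\tau$ the map $f_s$ takes the component of $\D\setminus\gamma_s$ containing the origin onto $\D$ normalized at the origin, so $f_s'(0)=\CR(\D\setminus\gamma_s)^{-1}=e^{\capa(\gamma_s)}$; by the chain rule for $\Psi_t=\circ_{s<t}f_s$ (taking the limit along bubbles of radius at least $\rho$ and using the Carath\'eodory convergence established earlier) one obtains $\Psi_t'(0)=\exp(\sum_{s<t}\capa(\gamma_s))$, hence $\CR(D_t)=\CR(\Psi_t^{-1}(\D))=\Psi_t'(0)^{-1}=\exp(-\sum_{s<t}\capa(\gamma_s))$. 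Substituting this into the definition of $M_t$ yields $M_t=\CR(D_t)^{-\alpha}e^{-\theta(\alpha)t}$, the asserted weight. The step I expect to be most delicate is the Poisson change of measure over the infinite-intensity bubble space — in particular justifying that $\prod_{s<t}e^{\alpha\capa(\gamma_s)}$ converges and that $\E[M_t\cond\tau\ge t]=1$ — and this is precisely where finiteness of $\theta(\alpha)$ and the almost sure finiteness of $\sum_{s<t}\capa(\gamma_s)$ enter; it is cleanest to carry out either via Laplace functionals or via the $\rho$-truncation just described.
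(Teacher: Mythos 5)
Your proposal is correct and follows essentially the same route as the paper: identify $(\gamma_s,s<t)$ conditioned on $\{\tau\ge t\}$ as a Poisson point process with intensity $1_{E(\gamma)}\nu^{bub}_{\D}$, and verify the reweighting by the exponential/Laplace-functional formula, using Proposition \ref{prop::bubble_conditioned_cle} to identify $e^{\alpha\capa(\gamma)}1_{E(\gamma)}\nu^{bub}_{\D}$ with $\nu^{bub}_{\DO}$ and Lemma \ref{lem::bubble_estimate} for $\theta(\alpha)<\infty$. Your added details (the thinning argument for the conditioning, the $\rho$-truncation, and the identity $\CR(D_t)=\exp(-\sum_{s<t}\capa(\gamma_s))$ for the ``in particular'' clause) are correct elaborations of steps the paper leaves implicit.
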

\begin{proof}
We first note that the process $(\gamma_s,s<t)$ conditioned on $\{\tau\ge t\}$ has the same law as a Poisson point process with intensity $1_{E(\gamma)}\nu^{bub}_{\D}$ restricted to the time interval $[0,t)$. Suppose $(\hat{\gamma}_s,s\ge 0)$ is a Poisson point process with intensity $1_{E(\gamma)}\nu^{bub}_{\D}$, and define
\[\hat{M}_t=\exp\left(\alpha\sum_{s<t} \capa(\hat{\gamma}_s)-\theta(\alpha) t\right).\]
We only need to show that, for any function $f$ on bubbles that makes every integral finite, we have
\[\E\left[\exp\left(-\sum_{s<t}f(\hat{\gamma}_s)\right)\hat{M}_t\right]=\exp\left(-t\int (1-e^{-f(\gamma)})e^{\alpha \capa(\gamma)}1_{E(\gamma)}\nu^{bub}_{\D}[d\gamma]\right).\]
This can be obtained by direct calculation:
\begin{eqnarray*}
\lefteqn{-\log \E\left[\exp\left(-\sum_{s<t}f(\hat{\gamma}_s)\right)\hat{M}_t\right]}\\
&=& -\log \E\left[\exp\left(-\sum_{s<t}(f(\hat{\gamma}_s)-\alpha \capa(\hat{\gamma}_s))\right)\right]+\theta(\alpha) t\\
&=& t\int (1-e^{-f(\gamma)+\alpha \capa(\gamma)})1_{E(\gamma)}\nu^{bub}_{\D}[d\gamma]+\theta(\alpha) t\\
&=& t\int (1-e^{-f(\gamma)})e^{\alpha \capa(\gamma)} 1_{E(\gamma)}\nu^{bub}_{\D}[d\gamma].
\end{eqnarray*}
\end{proof}

The fact that $\theta(\alpha)$ is a positive finite constant is guaranteed by the following lemma.
\begin{lemma}\label{lem::bubble_estimate}
The quantity
$\theta(\eta)$, which is defined in Equation (\ref{eqn::martingale_compensation}), is finite as long as $\eta< 1-\kappa/8$.

The quantity
\begin{equation}\label{eqn::conditioned_bubble_estimate}
\int(e^{\eta \capa(\gamma)}-1)\nu^{bub}_{\DO}[d\gamma]
\end{equation}
is finite as long as $\eta< 2/\kappa-\kappa/32$.
\end{lemma}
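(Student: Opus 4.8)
## Proof Proposal for Lemma \ref{lem::bubble_estimate}

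The plan is to reduce both finiteness statements to known estimates on chordal $\SLE_\kappa$ via the already-established Radon-Nikodym relationships. For the first quantity $\theta(\eta)$, I would split the integral over $\{R(\gamma)\le 1/2\}$ and $\{R(\gamma)\ge 1/2\}$. On the small-bubble part, we have $\capa(\gamma)\lesssim R(\gamma)^2$, so $e^{\eta\capa(\gamma)}-1\lesssim \capa(\gamma)\lesssim R(\gamma)^2$, and since $\nu^{bub}_{\D;x}[R(\gamma)\ge r]\asymp r^{-\beta}$ with $\beta<2$ (Proposition \ref{res::bubble_simple_cle}), the integral $\int 1_{[R(\gamma)\le 1/2]}R(\gamma)^2\,\nu^{bub}_{\D}[d\gamma]$ converges. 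The large-bubble part is the interesting one: on $\{R(\gamma)\ge 1/2\}\cap E(\gamma)$, we have $e^{\eta\capa(\gamma)} = \CR(\D\setminus\gamma)^{-\eta}$ (since $\capa(\gamma)=-\log\CR(\D\setminus\gamma)$), so the integral is exactly $\int 1_{E(\gamma)\cap[R(\gamma)\ge 1/2]}\CR(\D\setminus\gamma)^{-\eta}\,\nu^{bub}_{\D}[d\gamma]$ up to an additive finite term, which is finite for $\eta<1-\kappa/8$ by Lemma \ref{lem::bubble_conditioned_finite}. This gives the first claim.

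For the second quantity, I would use the Radon-Nikodym derivative from Proposition \ref{prop::bubble_conditioned_cle}, namely $\nu^{bub}_{\DO;x}/\nu^{bub}_{\D;x}[d\gamma] = 1_{E(\gamma)}\CR(\D\setminus\gamma)^{-\alpha}$. Thus
\[
\int(e^{\eta\capa(\gamma)}-1)\,\nu^{bub}_{\DO}[d\gamma] = \int (e^{\eta\capa(\gamma)}-1)1_{E(\gamma)}\CR(\D\setminus\gamma)^{-\alpha}\,\nu^{bub}_{\D}[d\gamma].
\]
Again I split over $\{R(\gamma)\le 1/2\}$ and $\{R(\gamma)\ge 1/2\}$. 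On the small part, $e^{\eta\capa(\gamma)}-1\lesssim\capa(\gamma)\lesssim R(\gamma)^2$ and $\CR(\D\setminus\gamma)^{-\alpha}$ is bounded by a constant times $R(\gamma)^{-\alpha}$ via the Koebe/Schwarz estimate $\CR(\D\setminus\gamma)\ge\dist(0,\gamma)$ — actually more carefully one uses that when $\gamma\subset B(x,R(\gamma))$ and doesn't surround the origin, $\CR(\D\setminus\gamma)$ is bounded below by something comparable; combined with $\nu^{bub}_{\D;x}[R(\gamma)\ge r]\asymp r^{-\beta}$ this part converges provided the exponents work out (here one should be a touch careful, but the small-bubble contribution is benign). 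On the large part $\{R(\gamma)\ge 1/2\}\cap E(\gamma)$, using $e^{\eta\capa(\gamma)}=\CR(\D\setminus\gamma)^{-\eta}$ the integrand is $\lesssim \CR(\D\setminus\gamma)^{-(\eta+\alpha)}$, so the whole thing reduces to $\int 1_{E(\gamma)\cap[R(\gamma)\ge 1/2]}\CR(\D\setminus\gamma)^{-(\eta+\alpha)}\,\nu^{bub}_{\D}[d\gamma]$, which by Lemma \ref{lem::bubble_conditioned_finite} is finite as long as $\eta+\alpha<1-\kappa/8$. Plugging in $\alpha=\frac{(8-\kappa)(3\kappa-8)}{32\kappa}$ and simplifying $1-\kappa/8-\alpha$ should give exactly $2/\kappa-\kappa/32$, which is the stated threshold.

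The main obstacle I anticipate is the bookkeeping on the small-bubble part of the second integral: one needs a clean upper bound relating $\CR(\D\setminus\gamma)^{-\alpha}$ to a power of $R(\gamma)$ that is uniform over bubbles not surrounding the origin, and then to check that the resulting power of $R(\gamma)$ against the $r^{-\beta}$ tail still integrates near zero. For small $\gamma$ rooted at $x\in\partial\D$ and not surrounding $0$, we have $\CR(\D\setminus\gamma)$ bounded below by a universal constant (the origin is far from $\gamma$), so in fact $\CR(\D\setminus\gamma)^{-\alpha}$ is simply $O(1)$ on $\{R(\gamma)\le 1/2\}$, which makes this part trivial — the only real content is the large-bubble $\SLE$ estimate, which we are importing. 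The other point requiring care is verifying the arithmetic identity $1-\kappa/8-\alpha = 2/\kappa-\kappa/32$; this is a routine but necessary algebraic check using the definitions in \eqref{eqn::universal_relation_constants}.
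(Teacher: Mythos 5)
Your proposal is correct and follows essentially the same route as the paper: split each integral at $R(\gamma)=1/2$, bound the small-bubble part via $e^{\eta\capa(\gamma)}-1\lesssim\capa(\gamma)\lesssim R(\gamma)^2$, and reduce the large-bubble part to Lemma \ref{lem::bubble_conditioned_finite}, with the second quantity handled through the Radon--Nikodym factor $\CR(\D\setminus\gamma)^{-\alpha}$ so that the condition becomes $\eta+\alpha<1-\kappa/8$. You actually supply more detail than the paper does for the second integral (the observation that $\CR(\D\setminus\gamma)^{-\alpha}=O(1)$ on small bubbles not surrounding the origin, and the algebraic check $1-\kappa/8-\alpha=2/\kappa-\kappa/32$, both of which are correct), where the paper simply asserts the conclusion in one line.
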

\begin{proof}
The integral in $\theta(\eta)$ may explode when $R(\gamma)$ is small or when $\gamma$ is close to the origin. We will control the two parts separately.

For the first part, we have
\begin{eqnarray*}
\lefteqn{\int (e^{\eta \capa(\gamma)}-1)1_{\{R(\gamma)\le 1/2\}}\nu^{bub}_{\D}[d\gamma]}\\
&\lesssim& \int \capa(\gamma)1_{\{R(\gamma)\le 1/2\}}\nu^{bub}_{\D}[d\gamma]\\
&\lesssim& \int R(\gamma)^2 1_{\{R(\gamma)\le 1/2\}}\nu^{bub}_{\D}[d\gamma]<\infty.
\end{eqnarray*}
For the second part, by Lemma \ref{lem::bubble_conditioned_finite}, we get
\begin{eqnarray*}
\lefteqn{\int (e^{\eta \capa(\gamma)}-1)1_{\{R(\gamma)\ge1/2\}}1_{E(\gamma)}\nu^{bub}_{\D}[d\gamma]}\\
&\le& \int \CR(\D\setminus\gamma)^{-\eta}1_{\{R(\gamma)\ge1/2\}}1_{E(\gamma)}\nu^{bub}_{\D}[d\gamma]<\infty.\end{eqnarray*}

The quantity in Equation (\ref{eqn::conditioned_bubble_estimate}) is finite as long as $\eta+\alpha< 1-\kappa/8$.
\end{proof}

We conclude this section by explaining that the sequence of loops $(L^{\dag}_t,t\ge 0)$ obtained from the sequence of bubbles $(\gamma^{\dag}_t,t\ge 0)$ (the Poisson point process with intensity $\nu^{bub}(\DO)$) does correspond to the loops in $\CLE$ in the punctured disc. Namely, we first remove from $\D$ all loops $L^{\dag}_t$ (with their interiors) for $t\ge 0$, and then, in each connected component, sample independent simple $\CLE$s. We will argue that the collection of these loops from simple $\CLE$ together with the sequence $(L^{\dag}_t,t\ge 0)$ has the same law as the collection of loops in $\CLE$ in the punctured disc. The idea is very similar to the one used in \cite[Section 7]{SheffieldWernerCLE} to show that the loops obtained from the bubbles have the same law as the loops in $\CLE$.

Suppose $\Gamma^{\dag}$ is a $\CLE$ in the punctured disc. Fix a point $z\in\DO$. Let $L^{\dag}(z)$ be the loop in $\Gamma^{\dag}$ that contains $z$. Following is the discrete exploration of $\Gamma^{\dag}$ to discover $L^{\dag}(z)$. Fix $\eps>0$ small and $\delta>\eps$ small. Sample $x_1\in\partial\D$ uniformly chosen from the circle. The loops of $\Gamma^{\dag}$ that intersect $B(x_1,\eps)$ are the loops we discovered. Call the connected component of the remaining domain that contains the origin the to-be-explored domain and let $f^{\dag,\eps}_1$ be the conformal map from the to-be-explored domain onto the unit disc normalized at the origin. Let $\gamma^{\dag,\eps}_1$ be the discovered loop with largest radius. The image of the loops in the to-be-explored domain under $f^{\dag,\eps}_1$ has the same law as $\CLE$ in the unit disc conditioned on the event that the origin is in the gasket. Thus we can repeat the same procedure, define $f^{\dag,\eps}_2$, $\gamma^{\dag,\eps}_2$ etc. For $k\ge 1$, define
\[\Phi^{\dag,\eps}_k=f^{\dag,\eps}_k\circ\cdots\circ f^{\dag,\eps}_1.\]
We also need to keep track of the point $z$: let $z_k=\Phi^{\dag,\eps}_k(z)$, and let $K$ be the largest $k$ such that $z\in (\Phi^{\dag,\eps}_k)^{-1}(\D)$. Define another auxiliary stopping time $K'\le K$ as the first step $k$ at which either $|z_k|\ge 1-\delta$ or $k=K$. If $K'<K$, this means that the point $z$ is conformally far from the origin and is likely to be cut off in the discrete exploration.

We first address the case that $z$ is discovered at step $K+1$. Note that $\Phi^{\dag,\eps}_K$ will converge in distribution towards some conformal map $\Psi^{\dag}_S$ (with similar explanation as for Proposition \ref{res::caratheodory_convergence}) obtained from the Poisson point process $(\gamma^{\dag}_t,t\ge 0)$. This implies that $L^{\dag}(z)$ has the same law as $(\Psi^{\dag}_S)^{-1}(\gamma^{\dag}_S)$ as we expected. Now we will deal with the case that $z$ is cut off from the origin: we stop the discrete exploration at step $K'-1$. At step $K'$, instead of discovering the loops intersecting the ball of radius $\eps$, we discover the loops intersecting the circle centered at $z_{K'-1}$ with radius $\sqrt{\delta}$. After this step, we continue the discrete exploration (of size $\eps$) by targeting at the image of the point $z$, in the same way for the discrete exploration of simple $\CLE$ targeted at the image of the point $z$. We will discover the point $z$ at some step. Let $\eps$ and $\delta$ go to zero in proper way, we can also prove the conclusion for $L^{\dag}(z)$ in this case.

We also need to show the conclusion for the joint law of $(L^{\dag}(z_1),...,L^{\dag}(z_k))$ where $z_1,...,z_k\in\DO$. The argument is almost the same as above and we leave it to the interested reader.

\section{CLE in the punctured plane}
\label{sec::cle_punctured_plane}
In this section we discuss $\CLE$ in the punctured plane. The following lemma is analogous to Lemma \ref{lem::annuluscles_coupling}, and we delay its proof to the end of this section.
\begin{lemma}\label{lem::annuluscles_coupling_wholeplane}
There exists a universal constant $C<\infty$ such that the following is true. Let $\delta\in (0,1), 0<r'<r<\delta^2$, and let $D \subset \A(\delta,1/\delta)$.
Suppose $\Gamma_r$ (resp. $\Gamma_{r'}$) is a $\CLE$ in the annulus $\A(r,1/r)$ (resp. $\A(r',1/r')$) and $D_r^*$ (resp. $D^*_{r'}$) is the set obtained by removing from $D$ all loops (and their interiors) of $\Gamma_r$ (resp. $\Gamma_{r'}$) that are not totally contained in $D$.
Then there exists a coupling between $\Gamma_r$ and $\Gamma_{r'}$ such that the probability of the event $\{D^*_r=D^*_{r'}\}$ is at least
\[1-C\frac{\log(1/\delta)}{\log(1/r)}.\]
Furthermore, on the event $\{D^*_r=D^*_{r'}\}$, the collection of loops of $\Gamma_r$ restricted to $D_r^*$ is the same as the collection of loops of $\Gamma_{r'}$ restricted to $D_{r'}^*$.
\end{lemma}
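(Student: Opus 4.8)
The plan is to follow the proof of Lemma~\ref{lem::annuluscles_coupling} almost verbatim, the one new feature being that the doubly-infinite annulus has \emph{two} necks to control — one near each of the two removed discs $\overline{B(0,r')}$ and $\C\setminus B(0,1/r')$ — and that, since $\A(r',1/r')$ is symmetric under the inversion $z\mapsto 1/z$, the two necks make equal contributions, so one only has to estimate one of them. To produce the coupling, let $\LL$ be a Brownian loop soup in $\A(r',1/r')$, let $\LL_1$ be the loops of $\LL$ that lie entirely in $\A(r,1/r)$ and $\LL_2=\LL\setminus\LL_1$ (independent), and condition on the event $E(\LL)$ that no cluster of $\LL$ disconnects $C_{r'}$ from $C_{1/r'}$; on $E(\LL)$ let $\Gamma_{r'}$ be the outer boundaries of the outermost clusters of $\LL$ and $\Gamma_r$ those of $\LL_1$; conditionally on $E(\LL)$ these have the laws of $\CLE$ in $\A(r',1/r')$ and in $\A(r,1/r)$ respectively.

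As in Lemma~\ref{lem::annuluscles_coupling}, on $E(\LL)$ one has $D^*_{r'}\subset D^*_r\subset\A(\delta,1/\delta)$, and $D^*_{r'}=D^*_r$ — with the restricted loop collections agreeing — provided no loop of $\LL_2$ meets $D^*_r$; hence $\PP[D^*_{r'}\neq D^*_r,\,E(\LL)]\le\PP[S,\,E(\LL)]$, where $S$ is the event that some loop of $\LL_2$ meets $\A(\delta,1/\delta)$. Next, $E(\LL)$ is contained in the intersection of the three events: no cluster of $\LL_1$ disconnects the two boundary circles of $\A(r,1/r)$ (probability $p(\CM(\A(r,1/r)))=p(r^2)$); no cluster formed by the loops of $\LL$ lying in $\A(r',r)$ disconnects $C_{r'}$ from $C_r$; and no cluster formed by the loops of $\LL$ lying in $\A(1/r,1/r')$ disconnects $C_{1/r}$ from $C_{1/r'}$ (each of probability $p(r'/r)$, as $\A(1/r,1/r')$ also has modulus $r'/r$). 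Because $r<\delta$, the loops governing these three events and those governing $S$ form pairwise disjoint sub-collections of $\LL$, hence the four events are independent; dividing by $p(\A(r',1/r'))=p((r')^2)$ and using Result~\ref{res::p_asymptotic} (via $(r')^2=(r'/r)\cdot r^2\cdot(r'/r)$) to bound $p(r^2)p(r'/r)^2$ by a universal constant times $p((r')^2)$, one gets $\PP[D^*_{r'}\neq D^*_r]\lesssim\PP[S]$.

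Finally one must bound $\PP[S]$. A loop of $\LL_2$ meeting $\A(\delta,1/\delta)$ is, being connected and not contained in $\A(r,1/r)$, a loop of $\LL$ that meets either both $C_r$ and $C_\delta$ or both $C_{1/\delta}$ and $C_{1/r}$; by $1-e^{-x}\le x$ and the inversion symmetry of $\A(r',1/r')$ (which swaps these two alternatives and preserves $\mu^{\mathrm{loop}}$), $\PP[S]$ is bounded by a constant times the Brownian-loop-soup mass of loops in $\A(r',1/r')$ meeting both $C_r$ and $C_\delta$, which one estimates by the same computation as $\Lambda(C_r,C_\delta;\A_{r'})$ in Lemma~\ref{lem::annuluscles_coupling} to get $\lesssim \log(1/\delta)/\log(1/r)$. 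I expect this last estimate to be the crux: in contrast to the simply connected case, where it is enough to subtract off the loops that also reach the outer circle $C_1$, here one has to subtract the loops that reach the outer removed disc $\C\setminus B(0,1/r')$ as well, and one must check the resulting bound is uniform in $r'<r$ — which is where the inversion symmetry, the estimate on $\rho$ from Result~\ref{res::Brownianloopmeasure_estimate}, and (if needed) the conditioning on $E(\LL)$ have to be combined with care; the rest of the argument is a line-by-line copy of Lemma~\ref{lem::annuluscles_coupling}, and $\CLE$ in the punctured plane is then built from this lemma exactly as $\CLE$ in the punctured disc is built from Lemma~\ref{lem::annuluscles_coupling} in Theorem~\ref{thm::conditionedcle_construction}.
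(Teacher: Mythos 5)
Your setup (the coupling via a single loop soup in $\A(r',1/r')$, the independence decomposition, and the bound $p(r^2)p(r'/r)^2\lesssim p((r')^2)$ via Proposition~\ref{res::p_asymptotic}) matches the paper, and you correctly identify the final estimate on $\PP[S]$ as the crux. But that is precisely where the proposal breaks down, and the step you propose is false: the \emph{unconditional} Brownian loop measure $\Lambda(C_r,C_\delta;\A(r',1/r'))$ is not $\lesssim \log(1/\delta)/\log(1/r)$ uniformly in $r'$ --- it diverges as $r'\to 0$. Indeed, by Result~\ref{res::Brownianloopmeasure_estimate} and scaling, $\Lambda(C_r,C_\delta;\C\setminus \D_{r'})\approx \log\bigl(\log(\delta/r')/\log(\delta/r)\bigr)$, which grows like $\log\log(1/r')$; subtracting the loops that also reach $C_{1/r'}$ (the analogue of the subtraction in Lemma~\ref{lem::annuluscles_coupling}) only removes a bounded amount, because at every dyadic scale $M$ between $1/\delta$ and $1/r'$ the loops of diameter $\asymp M$ contribute mass $\asymp 1/\log(M/r)$ to the crossing event while comfortably staying inside $\D_{1/r'}$. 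So in fact $\PP[S]\to 1$ as $r'\to 0$, and no unconditional loop-measure computation can close the argument. (This is exactly the contrast with Lemma~\ref{lem::annuluscles_coupling}, where the outer circle $C_1$ is at fixed distance and kills the large-loop contribution.)

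The missing idea is that one must retain the conditioning on $E(\LL)$ when estimating the crossing event: a macroscopic loop reaching from $C_\delta$ into $\overline{\D_r}$ typically helps create a cluster disconnecting the two boundary components, so such crossings are rare \emph{given} $E(\LL)$ even though they are typical unconditionally. The paper formalizes this by introducing $q(r)=\sup_\gamma \PP[E_\gamma(\LL)\cond E(\LL)]$, where $E_\gamma(\LL)$ is the event that no cluster of $\LL\cup\{\gamma\}$ disconnects the boundary circles and the supremum is over crossings $\gamma$ of the annulus; one shows $q(rr')\le Cq(r)q(r')$ and $q\to 0$, hence $q(r)\le r^{\tilde\alpha}$, and then bounds $\PP[\LL_5\neq\emptyset,\,E(\LL)\cond E_2,E_3,E_4]\le q(r/\delta)$ by viewing any crossing loop of $\LL_5$ as the added path $\gamma$ for the soup $\LL_3$ in $\A(r,\delta)$. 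Since $(r/\delta)^{\tilde\alpha}\lesssim \log(1/\delta)/\log(1/r)$, this gives the claimed bound. Without this (or an equivalent conditional estimate), your proof does not go through.
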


\begin{theorem}\label{thm::conditionedcle_wholeplane_construction}
There exists a unique measure on collections of disjoint simple loops in the punctured plane, which we call \textbf{$\CLE$ in the punctured plane}, or \textbf{$\CLE$ in $\C$ conditioned on the event that both the origin and infinity are in the gasket}, to which $\CLE$ in the annulus $\A(r,1/r)$ converges in the following sense. There exists a universal constant $C<\infty$ such that for any $\delta>0$ and any subset $D\subset A(\delta,1/\delta)$, if $\Gamma^{\dag}$ is a $\CLE$ in the punctured plane and $\Gamma_r$ is a $\CLE$ in the annulus $\A(r,1/r)$, and $D^{\dag,*}$ (resp. $D^*_r$) is the set obtained by removing from $D$ all loops of $\Gamma^{\dag}$ (resp. $\Gamma_r$) that are not totally contained in $D$, then $\Gamma^{\dag}$ and $\Gamma_r$ can be coupled so that the probability of the event $\{D^{\dag,*}=D^*_r\}$ is at least
\[1-C\frac{\log(1/\delta)}{\log(1/r)}.\]
Furthermore, on the event $\{D^{\dag,*}=D^*_r\}$, the collection of loops of $\Gamma^{\dag}$ restricted to $D^{\dag,*}$ is the same as the collection of loops of $\Gamma_r$ restricted to $D_r^*$.
\end{theorem}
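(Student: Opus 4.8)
The proof of Theorem \ref{thm::conditionedcle_wholeplane_construction} will follow exactly the same pattern as the proof of Theorem \ref{thm::conditionedcle_construction}, with Lemma \ref{lem::annuluscles_coupling_wholeplane} playing the role of Lemma \ref{lem::annuluscles_coupling}. The plan is as follows. First I would fix $D\subset \A(\delta,1/\delta)$ and define the sequence $r_k$ by $\log\log(1/r_k)=k$, so that $r_k\to 0$ and $\log(1/r_k)=e^k$. For each $k\ge 1$, let $\Gamma_k$ be a $\CLE$ in the annulus $\A(r_k,1/r_k)$ and let $D_k^*$ be the set obtained by removing from $D$ all loops of $\Gamma_k$ not totally contained in $D$. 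By Lemma \ref{lem::annuluscles_coupling_wholeplane} (applied with $\delta$ fixed, $r=r_k$, $r'=r_{k+1}$, noting $r_{k+1}<r_k<\delta^2$ for $k$ large enough — and discarding the finitely many small $k$ harmlessly), $\Gamma_k$ and $\Gamma_{k+1}$ can be coupled so that $\PP[D_k^*\ne D_{k+1}^*]\le C e^{-k}\log(1/\delta)$, and on $\{D_k^*=D_{k+1}^*\}$ the restricted loop collections agree.

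Next I would perform the Borel--Cantelli argument: coupling consecutive $\Gamma_k$'s in this way, since $\sum_k C e^{-k}\log(1/\delta)<\infty$, almost surely $D_k^*=D_{k+1}^*$ for all but finitely many $k$. Hence the sequence $(D_k^*)$ is eventually constant; call the common value $D^{\dag,*}$ (for this particular $D$), and define $\Gamma^{\dag}$ restricted to $D^{\dag,*}$ to be the eventual common restricted loop collection. A priori this defines $\Gamma^{\dag}$ only ``as seen through $D$''; to get a genuine measure on loop collections in the punctured plane one runs this for an exhausting sequence of such sets $D$ (e.g. $D=\A(\delta_m,1/\delta_m)$ with $\delta_m\downarrow 0$) and checks consistency, exactly as is implicit in Theorem \ref{thm::conditionedcle_construction}; the $\sigma$-field on loop collections is generated by events depending only on macroscopic loops, so this determines the law. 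Uniqueness is immediate from the quantitative coupling statement we are about to establish, since any two candidate limits would have to be couplable to agree with $\Gamma_r$ with probability $\to 1$ on every $D$.

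Then I would establish the quantitative rate. Fix $r>0$ and pick $k_0$ with $r_{k_0}\le r\le r_{k_0-1}$, so $e^{k_0-1}\le \log(1/r)\le e^{k_0}$. On one hand, the probability that $D^{\dag,*}\ne D_{k_0}^*$ is at most $\sum_{k\ge k_0}Ce^{-k}\log(1/\delta)\lesssim e^{-k_0}\log(1/\delta)\lesssim \log(1/\delta)/\log(1/r)$. On the other hand, applying Lemma \ref{lem::annuluscles_coupling_wholeplane} once more with the pair $(r, r_{k_0})$ (or $(r_{k_0},r)$, whichever is ordered correctly — both are below $\delta^2$ for $\delta$ small, and $r_{k_0}\le r$), $\Gamma_r$ and $\Gamma_{k_0}$ can be coupled so that $\PP[D_r^*\ne D_{k_0}^*]\le C\log(1/\delta)/\log(1/r)$, with the restricted loop collections agreeing on the good event. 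Composing the two couplings via a standard coupling-of-couplings argument and using the triangle-inequality-type bound on the bad events gives that $\Gamma^{\dag}$ and $\Gamma_r$ can be coupled with $\PP[D^{\dag,*}\ne D_r^*]\lesssim \log(1/\delta)/\log(1/r)$, and on the good event the restricted collections coincide. Absorbing constants gives the stated bound.

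The only genuine obstacle is Lemma \ref{lem::annuluscles_coupling_wholeplane} itself, whose proof is deferred; granting it, everything above is a verbatim transcription of the disc argument. The point worth checking carefully in that deferred lemma is the Brownian-loop-measure estimate $\Lambda(C_r,C_\delta;\A(r',1/r'))\lesssim \log(1/\delta)/\log(1/r)$ together with the analogous bound for loops surrounding the \emph{outer} boundary, i.e. $\Lambda(C_{1/r},C_{1/\delta};\A(r',1/r'))$, which by the inversion symmetry $z\mapsto 1/z$ of the Brownian loop measure equals $\Lambda(C_r,C_\delta;\A(r',1/r'))$ and so contributes the same order; combined with the sub-multiplicativity and asymptotics of $p(\cdot)$ from Proposition \ref{res::p_asymptotic} this yields the claimed coupling probability. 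Here I do not carry out that computation, as it mirrors the proof of Lemma \ref{lem::annuluscles_coupling}.
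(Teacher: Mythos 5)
Your proposal is correct and takes essentially the same approach as the paper: the paper proves this theorem precisely by observing that the argument for Theorem \ref{thm::conditionedcle_construction} carries over verbatim once Lemma \ref{lem::annuluscles_coupling_wholeplane} replaces Lemma \ref{lem::annuluscles_coupling}, which is exactly the transcription you give (the sequence $r_k$ with $\log\log(1/r_k)=k$, Borel--Cantelli, then the two-step coupling for general $r$). Your additional remarks on consistency over an exhausting family of sets $D$ and on uniqueness are sensible elaborations of points the paper leaves implicit.
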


It is clear that $\CLE$ in the punctured plane can also be viewed as the limit of $\CLE$ in $R\D$ conditioned on the event that the origin is in the gasket as $R\to\infty$ or the limit of $\CLE$ in $\C\setminus r\D$ conditioned on the event that infinity is in the gasket as $r\to 0$.

\begin{proposition}\label{prop::conditionedcle_wholeplane_confinv}
$\CLE$ in the punctured plane satisfies the conformal invariance:
\begin{enumerate}
\item [(1)] $\CLE$ in the punctured plane is invariant under the conformal map: $z\mapsto \lambda z$, for any $\lambda\in\C$.
\item [(2)] $\CLE$ in the punctured plane is invariant under the conformal map: $z\mapsto 1/z$.
\end{enumerate}
\end{proposition}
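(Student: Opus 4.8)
The plan is to derive both invariances from the uniqueness of the limiting object in Theorem~\ref{thm::conditionedcle_wholeplane_construction}, by exploiting the symmetries of the approximating families. The single elementary fact used throughout is that if $h$ is a homeomorphism of the Riemann sphere, then for every loop configuration $\Gamma$ and every open set $D$ one has $h(D^*(D,\Gamma))=D^*(h(D),h(\Gamma))$, and on the obvious event $h$ carries the loops of $\Gamma$ staying in $D^*$ to the loops of $h(\Gamma)$ staying in $h(D)^*$; this is immediate since $h$ preserves the relation ``$L$ is totally contained in $D$''. Consequently the precise coupling estimate of Theorem~\ref{thm::conditionedcle_wholeplane_construction} is transported verbatim by any such $h$, with $D$ replaced by $h(D)$. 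In both parts we will show that the pushforward of $\Gamma^{\dag}$ again satisfies the defining approximation property of that theorem with respect to a family of the same law as the one characterizing $\Gamma^{\dag}$, and then invoke uniqueness.

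\emph{Invariance under $z\mapsto 1/z$ (part (2)).} Let $\iota(z)=1/z$. For $\Gamma_r$ a $\CLE$ in $\A(r,1/r)$ we have $\iota(\Gamma_r)\stackrel{d}{=}\Gamma_r$: indeed $\iota$ is a conformal automorphism of $\A(r,1/r)$, so this follows from the conformal invariance of the Brownian loop soup and the loop-soup construction of $\CLE$ in the annulus (this is the Proposition in Section~\ref{sec::cle_annulus} recording this symmetry). Since $\iota$ maps each annulus $\A(\delta,1/\delta)$ onto itself and is a homeomorphism of $\CO$, applying $\iota$ to the coupling of $\Gamma^{\dag}$ with $\Gamma_r$ provided by Theorem~\ref{thm::conditionedcle_wholeplane_construction} shows that $\iota(\Gamma^{\dag})$ satisfies the defining approximation property relative to the family $\iota(\Gamma_r)$, which has the same law as the family $\Gamma_r$. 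By the uniqueness statement of that theorem, $\iota(\Gamma^{\dag})\stackrel{d}{=}\Gamma^{\dag}$.

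\emph{Invariance under $z\mapsto\lambda z$ (part (1)).} Fix $\lambda\in\C\setminus\{0\}$ and write $s_\lambda(z)=\lambda z$. Here we use the alternative description, noted right after Theorem~\ref{thm::conditionedcle_wholeplane_construction}, that $\CLE$ in the punctured plane is also the limit --- in the analogous coupling sense, with test sets $D$ now ranging over compact subsets of $\CO$ --- of $\Gamma^{\circ}_R$, the $\CLE$ in $R\D$ conditioned on the event that the origin is in the gasket, as $R\to\infty$. Because $\CLE$ in a simply connected domain with a marked singular point is defined as the conformal image of $\CLE$ in $\DO$, the map $z\mapsto Rz$ identifies $\Gamma^{\circ}_R$ with $R$ times a sample of $\CLE$ in $\DO$, and combining this with the rotational invariance of $\CLE$ in the punctured disc gives $s_\lambda(\Gamma^{\circ}_R)\stackrel{d}{=}\lambda R\cdot(\CLE\text{ in }\DO)\stackrel{d}{=}|\lambda|R\cdot(\CLE\text{ in }\DO)\stackrel{d}{=}\Gamma^{\circ}_{|\lambda|R}$. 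Since $s_\lambda$ is a homeomorphism of $\CO$ carrying compact subsets of $\CO$ to compact subsets of $\CO$ and commuting with loop-deletion, applying it to the coupling of $\Gamma^{\dag}$ with $\Gamma^{\circ}_R$ shows that $s_\lambda(\Gamma^{\dag})$ satisfies the defining approximation property relative to the family $s_\lambda(\Gamma^{\circ}_R)\stackrel{d}{=}\Gamma^{\circ}_{|\lambda|R}$, $R\to\infty$, which after the reindexing $R'=|\lambda|R$ is exactly the family characterizing $\Gamma^{\dag}$. Uniqueness again yields $s_\lambda(\Gamma^{\dag})\stackrel{d}{=}\Gamma^{\dag}$. (Alternatively one can bypass the punctured-disc description: the proof of Lemma~\ref{lem::annuluscles_coupling_wholeplane} applies with no change to show that $\CLE$ in $\A(a_n,b_n)$ converges to $\Gamma^{\dag}$ whenever $a_n\to 0$ and $b_n\to\infty$, and then $s_\lambda(\Gamma_r)=\CLE$ in $\A(|\lambda|r,|\lambda|/r)$ converges to $\Gamma^{\dag}$, giving the conclusion directly.)

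\emph{Main obstacle.} The argument is essentially soft, so there is no serious analytic difficulty; the only point requiring care is the bookkeeping involved in transporting the precise statement of Theorem~\ref{thm::conditionedcle_wholeplane_construction} through the homeomorphisms $\iota$ and $s_\lambda$ --- checking that the loop-deletion operation $\Gamma\mapsto D^*(D,\Gamma)$ commutes with pushforward, that $\iota$ and $s_\lambda$ respect the class of test sets appearing in the relevant convergence statement (annuli $\A(\delta,1/\delta)$, resp.\ compact subsets of $\CO$), and that the post-pushforward comparison family is, up to reindexing, genuinely the one characterizing $\Gamma^{\dag}$. Once these are in place, the uniqueness in Theorem~\ref{thm::conditionedcle_wholeplane_construction} closes both parts.
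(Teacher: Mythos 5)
Your proposal is correct and takes essentially the same route as the paper, which simply asserts that the conformal invariance is a direct consequence of the construction in Theorem \ref{thm::conditionedcle_wholeplane_construction} (using the inversion invariance of $\CLE$ in $\A(r,1/r)$ and the fact that the limit does not depend on the approximating sequence of annuli). Your write-up supplies exactly the bookkeeping the paper leaves implicit, including the needed observation that the convergence also holds along general annuli $\A(a_n,b_n)$ with $a_n\to 0$, $b_n\to\infty$, which is what makes the scaling case work.
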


Note that, Lemma \ref{lem::annuluscles_coupling_wholeplane} and Theorem \ref{thm::conditionedcle_wholeplane_construction} are the counterparts of Lemma \ref{lem::annuluscles_coupling} and Theorem \ref{thm::conditionedcle_construction}. The proof of Theorem \ref{thm::conditionedcle_wholeplane_construction} from Lemma \ref{lem::annuluscles_coupling_wholeplane} is almost the same as the proof of Theorem \ref{thm::conditionedcle_construction} from Lemma \ref{lem::annuluscles_coupling}. The conformal invariance of $\CLE$ in the punctured plane in Proposition \ref{prop::conditionedcle_wholeplane_confinv} is then a direct consequence of the construction in Theorem \ref{thm::conditionedcle_wholeplane_construction}. The rest of this subsection is devoted to the proof of Lemma \ref{lem::annuluscles_coupling_wholeplane}, since the proof of Lemma \ref{lem::annuluscles_coupling} does not work directly here, we need some extra effort to complete the proof of Lemma \ref{lem::annuluscles_coupling_wholeplane}.
\begin{proof}[Proof of Lemma \ref{lem::annuluscles_coupling_wholeplane}]
We first introduce a quantity $q(r)$ for $r>0$ small: Let $\LL$ be a Brownian loop soup in $\A_r$, define $E(\LL)$ as the event that there is no cluster of $\LL$ disconnecting $C_r$ from $C_1$. Suppose $\gamma$ is a continuous path in $\A_r$ connecting $C_r$ to $C_1$. Define $E_{\gamma}(\LL)$ to be the event that there is no cluster of $\LL\cup\{\gamma\}$ disconnecting $C_r$ from $C_1$. See Figure \ref{fig::conditionedcle_wholeplane_q}. Clearly, $E_{\gamma}(\LL)\subset E(\LL)$. Define
\[q(r)=\sup_{\gamma}\PP[E_{\gamma}(\LL)\cond E(\LL)],\]
where the $\sup$ is taken over all possible continuous paths $\gamma$ in $\A_r$ that connect $C_r$ to $C_1$. We can see that $q(r)\to 0$ as $r$ goes to zero.

\begin{figure}[ht!]
\begin{center}
\includegraphics[width=0.23\textwidth]{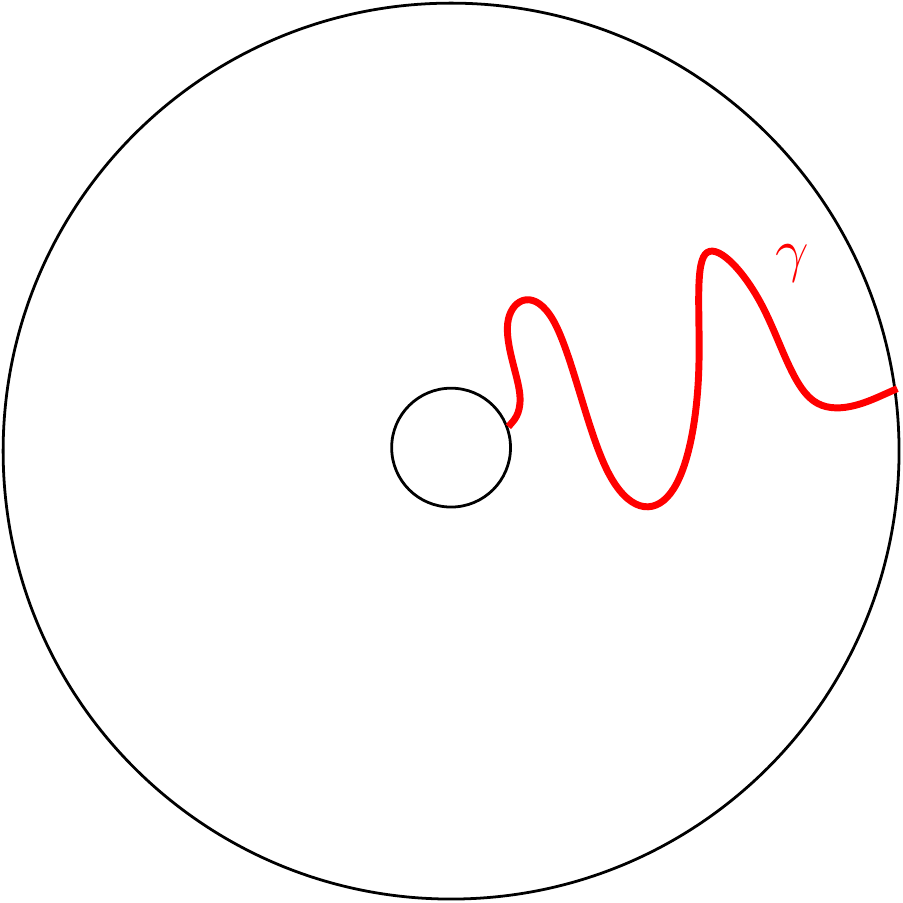}
\includegraphics[width=0.23\textwidth]{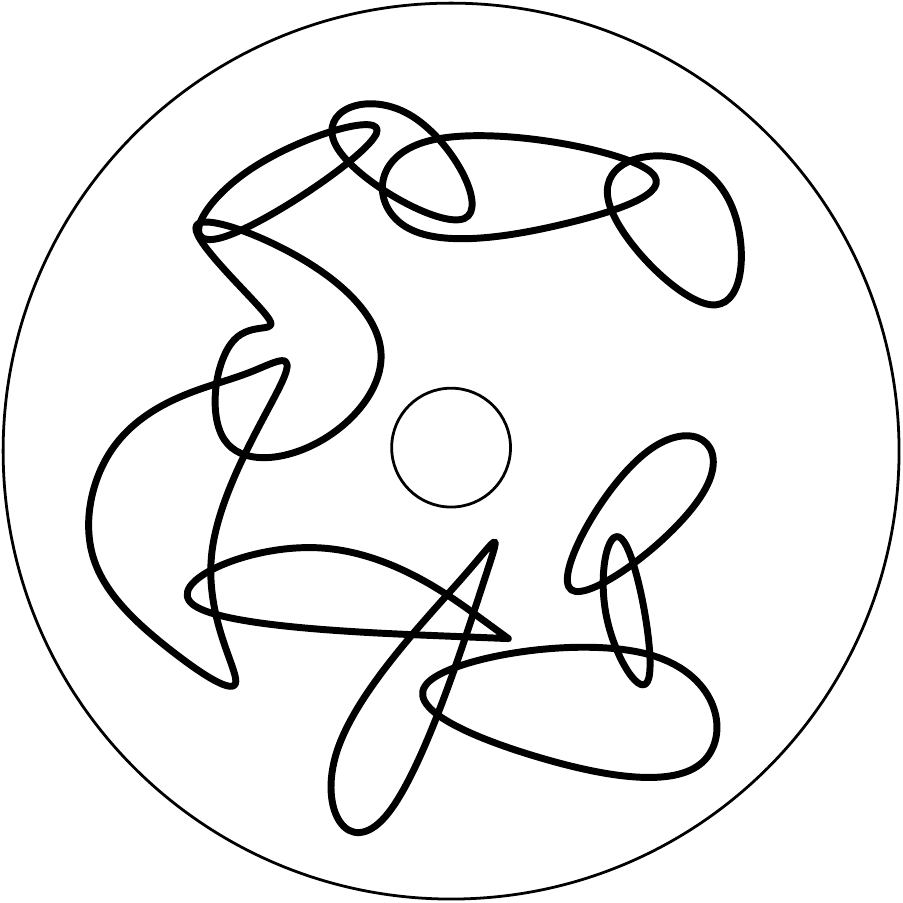}
\includegraphics[width=0.23\textwidth]{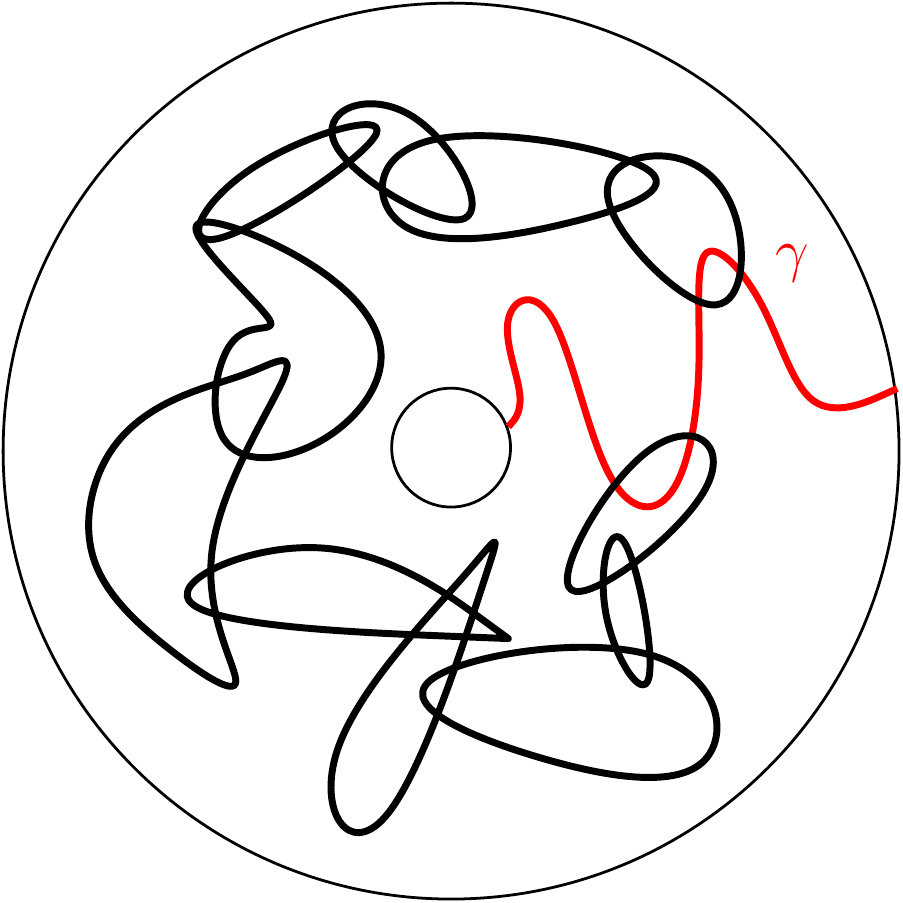}
\end{center}
\caption{\label{fig::conditionedcle_wholeplane_q} The first panel indicates a continuous path $\gamma$ in the annulus connecting the two pieces of the boundary. The second panel indicates a Brownian loop soup in the annulus and $E(\LL)$ holds. The third panel indicates that $E_{\gamma}(\LL)$ does not hold even though $E(\LL)$ holds.}
\end{figure}

Take $r,r'>0$ small. Let $\LL$ be a Brownian loop soup in $\A_{rr'}$. Suppose $\LL_1$ (resp. $\LL_2$) is the collection of loops of $\LL$ that are contained in $\A(rr',r)$ (resp. $\A_r$).
Let $\gamma$ be any continuous path in $\A_{rr'}$ connecting $C_{rr'}$ to $C_1$. Suppose $\gamma_1$ (resp. $\gamma_2$) is part of $\gamma$ that is a continuous path in $\A(rr',r)$ (resp. $\A_r$) connecting $C_{rr'}$ to $C_r$ (resp. connecting $C_r$ to $C_1$). Then we have that
\begin{eqnarray*}
\lefteqn{\PP[E_{\gamma}(\LL)|E(\LL)]}\\
&=&\PP[E_{\gamma}(\LL)]/p(rr')\\
&\le& \PP[E_{\gamma_1}(\LL_1),E_{\gamma_2}(\LL_2)]/p(rr')\\
&=&\PP[E_{\gamma_1}(\LL_1)\cond E(\LL_1)]\times\PP[E_{\gamma_2}(\LL_2)\cond E(\LL_2)]\times p(r')p(r)/p(rr')\\
&\lesssim& q(r)q(r').
\end{eqnarray*}
Thus, there exists universal constant $C$ so that
\[q(rr')\le Cq(r)q(r').\]
Together with the fact that $q(r)\to 0$ as $r$ goes to zero, we have that there exists some constant $\tilde{\alpha}>0$ such that, for $r>0$ small, $$q(r)\le r^{\tilde{\alpha}}.$$

Now we are ready to complete the proof. Suppose $\LL$ is a Brownian loop soup in $\A(r',1/r')$. Let $\LL_1$ be the collection of loops of $\LL$ that are contained in $\A(r,1/r)$. On the event $E(\LL)$, let $\Gamma$ (resp. $\Gamma_1$) be the collection of the outer boundaries of outermost clusters of $\LL$ (resp. $\LL_1$). Let $D^*$ (resp. $D_1^*$) be the set obtained by removing from $D$ all loops of $\Gamma$ (resp. $\Gamma_1$) that are not totally contained in $D$.
Note that, if $\{D^*\neq D_1^*\}$, there must exists a loop in $\LL$ intersecting both $C_r$ and $C_{\delta}$ or intersecting both $C_{1/r}$ and $C_{1/\delta}$.
Define $S(\LL,C_r,C_{\delta})$ to be the event that there exists a loop of $\LL$ that intersects both $C_r$ and $C_{\delta}$. Then we have that
\begin{eqnarray*}
\lefteqn{\PP[D^*\neq D_1^*, E(\LL)]/p\left((r')^2\right)}\\
&\le &2\PP[S(\LL,C_r,C_{\delta}), E(\LL)]/p\left((r')^2\right).
\end{eqnarray*}

We divide the loops in $\LL$ into independent collections: Let $\LL_2$ be the loops in $\LL$ that are contained in $\A(r',r)$, $\LL_3$ be the loops in $\LL$ that are contained in $\A(r,\delta)$, $\LL_4$ be the loops in $\LL$ that are contained in $\A(\delta,1/r')$, and $\LL_5$ be the collection of loops in $\LL$ that intersect both $C_r$ and $C_{\delta}$. Clearly, $\LL_2,\LL_3,\LL_4,\LL_5$ are independent and the event $S(\LL,C_r,C_{\delta})$ is the same as $\{\LL_5\neq\emptyset\}$. Define $E_2$ (resp. $E_3$, $E_4$) to be the event that there is no cluster of $\LL_2$ (resp. $\LL_3$, $\LL_4$) disconnecting $C_{r'}$ from $C_r$ (resp. disconnecting $C_r$ from $C_{\delta}$, disconnecting $C_{\delta}$ from $C_{1/r'}$).
Then $E_2$, $E_3$, $E_4$ are independent, and their probabilities are $p(r'/r)$, $p(r/\delta)$, $p(\delta r')$ respectively. Thus
\begin{eqnarray*}
\lefteqn{\PP[S(\LL,C_r,C_{\delta}),E(\LL)]/p\left((r')^2\right)}\\
&=&\PP[\LL_5\neq\emptyset,E(\LL),E_2,E_3,E_4]/p\left((r')^2\right)\\
&=&\PP[\LL_5\neq\emptyset,E(\LL)\cond E_2,E_3,E_4]\times p(r'/r)p(r/\delta)p(\delta r')/p\left((r')^2\right)\\
&\lesssim & \PP[\LL_5\neq\emptyset,E(\LL)\cond E_2,E_3,E_4]\\
&\le&\PP[\LL_5\neq\emptyset, E(\LL_3\cup\LL_5)\cond E_2,E_3, E_4]\\
&=&\PP[\LL_5\neq\emptyset, E(\LL_3\cup\LL_5)\cond E_3]\\
&\le & q(r/\delta),\end{eqnarray*}
where $E(\LL_3\cup\LL_5)$ is the event that there is no cluster of $\LL_3\cup\LL_5$ disconnecting $C_r$ from $C_{\delta}$. This implies the conclusion.
\end{proof}

\bibliographystyle{alpha}
\bibliography{cle_doubly_connected}
\bigbreak
\noindent Department of Mathematics\\
\noindent Massachusetts Institute of Technology\\
\noindent Cambridge, MA, USA\\
\noindent sheffield@math.mit.edu\\
\noindent sswatson@mit.edu\\
\noindent hao.wu.proba@gmail.com

\end{document}